\documentclass[11pt]{rmmcart}
\usepackage[margin=2.5cm]{geometry}
\usepackage{amsmath, amsthm, amssymb}
\usepackage{epsfig}
\usepackage{rawfonts}
\usepackage{enumerate}
\usepackage{graphics}
\usepackage{verbatim}
\usepackage{xspace}
     \usepackage{graphicx}

\usepackage{pgf,tikz,pgfplots}
\usepackage{mathrsfs}
\usetikzlibrary{arrows}
\usepackage{amsmath}
\usepackage{amsfonts}
\usepackage{amssymb}
\usepackage{amsthm}
\usepackage{graphicx}
\usepackage{booktabs}
\usepackage{caption}
\usepackage{listings}
\usepackage{setspace}
\usepackage[mathscr]{eucal}
\usepackage{pgfplots}
\usepackage{hyperref}
\usepackage{wrapfig}
\usepackage{floatflt,epsfig}
\usepackage{ dsfont }
\usepackage{amscd}
\usepackage{tikz-cd}
\usepackage{fancyhdr}
\usepackage[all]{xy}
\usepackage{latexsym}
\usepackage{amscd}
\usepackage{pifont}
\usepackage{eufrak}
\usepackage{subfig}

\sloppy

\newcommand{\rank}{\mathop{\rm rank}\nolimits}

\newcommand{\numberset}{\mathbb}
\newcommand{\N}{\numberset{N}}
\newcommand{\Z}{\numberset{Z}}

\newcommand{\R}{\numberset{R}}

\def\NN{{\mathbb N}}
\def\ZZ{{\mathbb Z}}

\newcommand{\cC}{\mathcal{C}}

\newcommand{\cP}{\mathcal{P}}
\newcommand{\cH}{\mathcal{H}}

\newcommand{\cB}{\mathcal{B}}

\newcommand{\cF}{\mathcal{F}}
\newcommand{\cW}{\mathcal{W}}
\newcommand{\cR}{\mathcal{R}}
\newcommand{\cS}{\mathcal{S}}

\theoremstyle{plain}

\theoremstyle{theorem}

\newtheorem{defn}{Definition}[section]
\newtheorem{prop}[defn]{Proposition}
\newtheorem{thm}[defn]{Theorem}
\newtheorem{lemma}[defn]{Lemma}

\newtheorem{coro}[defn]{Corollary}

\newtheorem{rmk}[defn]{Remark}

\theoremstyle{remark}

\begin{document}
		
		\title[PRIMALITY OF CLOSED PATH POLYOMINOES]{PRIMALITY OF CLOSED PATH POLYOMINOES}
		\author{CARMELO CISTO}
		\address{Universit\'{a} di Messina, Dipartimento di Scienze Matematiche e Informatiche, Scienze Fisiche e Scienze della Terra\\
			Viale Ferdinando Stagno D'Alcontres 31\\
			98166 Messina, Italy}
		\email{carmelo.cisto@unime.it}

		\author{FRANCESCO NAVARRA}
		\address{Universit\'{a} di Messina, Dipartimento di Scienze Matematiche e Informatiche, Scienze Fisiche e Scienze della Terra\\
			Viale Ferdinando Stagno D'Alcontres 31\\
			98166 Messina, Italy}
		\email{francesco.navarra@unime.it}

      \keywords{Polyominoes, toric ideals, zig-zag walks.}
		
		\subjclass[2010]{05B50, 05E40, 13C05, 13G05}

		\begin{abstract}
	 In this paper we introduce a new class of polyominoes, called \emph{closed paths}, and we study the primality of their associated ideal. Inspired by an existing conjecture that characterizes the primality of a polyomino ideal by nonexistence of zig-zag walks, we classify all closed paths which do not contain zig-zag walks, and we give opportune toric representations of the associated ideals. To support the conjecture we prove that having no zig-zag walks is a necessary and sufficient condition for the primality of the associated ideal of a closed path. Finally, we present some classes of prime polyominoes viewed as generalizations of closed paths.\\
		
		\end{abstract}

		\maketitle
		
	\section{Introduction}
	
	Polyominoes are plane figures, made up of squares of the same size joined edge by edge. They appeared for the first time in recreational mathematics and combinatorics about sixty years ago and they are studied especially in tiling problems of the plane. In 2012 Ayesha A. Qureshi connected polyominoes to Commutative Algebra, assigning to every polyomino $\cP$ the ideal of inner 2-minors, called a \textit{polyomino ideal}, denoted by $I_{\cP}$(\cite{Qureshi}). In literature there are many studies about the ideals of $t$-minors of an $m \times n$ matrix of indeterminates, for any integer 
	$1\leq t\leq \min\{m,n\}$, and the ideals
	generated by all $t$-minors of a one or two sided ladder (\cite{conca1}, \cite{conca2}, \cite{conca3}).
	The ideals of adjacent 2-minors are discussed in several papers (\cite{adiajent1},\cite{adiajent3},\cite{adjent 2}) as well as the ideals generated by an arbitrary set
	of $2$-minors in a $2\times n$ matrix (\cite{2.n}).
	The class of polyomino ideals generalizes the class of the ideals generated by 2-minors of $m\times n$ matrices. \\
	Our goal is to investigate the main algebraic properties of $I_{\cP}$, depending on the shape of $\cP$; in particular it is interesting to study its primality. We know that polyomino ideals attached to simple polyominoes are prime ideals. Roughly speaking, a simple polyomino is a polyomino without holes. The primality of simple polyominoes has been proved in \cite{Simple equivalent balanced}, showing that simple and balanced polyominoes are equivalent and using the fact that a polyomino ideal associated to a balanced one is prime (see \cite{def balanced}). Independently of this, in \cite{Simple are prime} it has been shown that polyomino ideals associated to simple polyominoes are prime, by identifying their quotient ring with the toric ring of a weakly chordal graph. The study is applied to multiply connected polyominoes, that are polyominoes with one or more holes. In \cite{Not simple with localization} and \cite{Shikama}, the authors discuss a family of prime polyominoes, obtained by removing a convex polyomino from a rectangle in $\NN^2$. It is not an easy task to give a complete classification of all polyominoes whose ideal is prime. In \cite{Trento} the authors give an interesting tool, which seems to be very useful to characterize the primality of multiply connected polyominoes. They define a particular sequence of inner intervals of $\cP$, called a \textit{zig-zag walk}, and they prove that its nonexistence in $\cP$ is a necessary condition to the primality of $I_{\cP}$. Moreover they show that it is a sufficient condition for polyominoes made up of at most fourteen cells, by a computational method. It seems that nonexistence of zig-zag walks in a polyomino should characterize its primality. \\
	In this paper we study the primality of other classes of polyomino ideals; in particular we introduce a class of polyominoes, namely \emph{closed paths}, in which having no zig-zag walks is a sufficient condition for the associated ideal to be prime. We mention that such polyominoes were introduced independently also in [13], where the authors call them \textit{thin cycles}. In particular in [13, Corollary 3.6] the authors give another interesting sufficient condition for the primality of such polyominoes, using Gr\"obner basis theory. 
	In Section \ref{Section Introductioon} we introduce the preliminary notions and some useful tools. In Section \ref{Section Closed path} we define the particular class of multiply connected polyominoes which we call \textit{closed paths}, and we give two sufficient geometric conditions in order that they do not contain zig-zag walks. In fact we introduce the L-configuration, that consists of a path of five cells $A_1, \dots, A_5$ such that the two blocks $A_1, A_2, A_3$ and $A_3, A_4, A_5$ go in  orthogonal directions, and we prove that if a closed path has an L-configuration, then it does not contain zig-zag walks. Moreover we define a ladder with at least three steps in a closed path and we show that having this structure is a sufficient condition to have no zig-zag walks. In Sections \ref{Section L-conf toric} and \ref{Section toric ladder} we give a toric representation of a closed path with an L-configuration or a ladder respectively, using a method similar to Shikama's one in \cite{Shikama} and attaching the hole variable only to a particular set of vertices of the L-configuration or ladder. In Section \ref*{Section main result} we present the main result of the paper. At first we show that having an L-configuration or a ladder with at least three steps is a necessary and sufficient condition in order to have no zig-zag walks for a closed path. This result characterizes the structure of closed paths which contain no zig-zag walks, and  makes possible to prove that the conjecture in \cite{Trento} is true for such a class.  
	At the end of this work we study some particular classes of prime polyominoes, that we can build using paths and simple polyominoes. They are a weak generalization of closed paths. We give some sufficient conditions for the primality of the related ideals. Necessary conditions for primality, and so proving the conjecture of \cite{Trento} for such classes, are difficult to find out and are left here as open questions.
		
	\section{Basics on polyominoes and polyomino ideals}\label{Section Introductioon}
We consider the natural partial order on $\R^2$: given $(i,j),(k,l)\in \R^2$, we say $(i,j)\leq(k,l)$ if $i\leq k$ and $j\leq l$. Let $a=(i,j),b=(k,l)\in\Z^2$. The set $[a,b]=\{(r,s)\in \Z^2: i\leq r\leq k,\ j\leq s\leq l \}$ is called an \textit{interval} of $\Z^2$. We define the \textit{closure} of $[a,b]$ the set $\overline{[a,b]}=\{x\in \R^2:a\leq x\leq b \}$. If $i< k$ and $j< l$, we say that $[a,b]$ is a \textit{proper} interval. The elements $a, b$ are called the \textit{diagonal corners} and $c=(i,l)$, $d=(k,j)$ the \textit{anti-diagonal corners} of $[a,b]$. If $j=l$ (or $i=k$) we say that $a$ and $b$ are in \textit{horizontal} (or \textit{vertical}) \textit{position}. An elementary interval of the form $C=[a,a+(1,1)]$ is a \textit{cell} with \textit{lower left corner} $a$. The elements $a$, $a+(0,1)$, $a+(1,0)$ and $a+(1,1)$ are called the \textit{vertices} or \textit{corners} of $C$ and the sets $\{a,a+(1,0)\}$, $\{a+(1,0),a+(1,1)\}$, $\{a+(0,1),a+(1,1)\}$ and $\{a,a+(0,1)\}$ are called the \textit{edges} of $C$. We denote the set of the vertices and the edges of $C$ respectively by $V(C)$ and $E(C)$. \\
Let $C$ and $D$ be two distinct cells of $\ZZ^2$. A \textit{walk} from $C$ to $D$ is a sequence $\cC:C=C_1,\dots,C_m=D$ of cells of $\ZZ^2$ such that $C_i \cap C_{i+1}$ is an edge of $C_i$ and $C_{i+1}$ for $i=1,\dots,m-1$. If in addition $C_i \neq C_j$ for all $i\neq j$, then $\cC$ is called a \textit{path} from $C$ to $D$. If $\cC_1:A_1,\dots,A_m$ and $\cC_2:B_1,\dots,B_n$ are two walks such that $A_m=B_1$, then the union of $\cC_1$ and $\cC_2$ is defined by the walk $A_1,\dots,A_{m-1},A_m,B_2,\dots,B_n$ and it is denoted by $\cC_1\cup \cC_2$.

	\begin{rmk}\rm \label{Remark: walk diventa path}
		
		In general, a walk $\cC:C=C_1,\dots,C_m=D$ from $C$ to $D$ contains a path between $C$ and $D$, that is there exists a path $\cF$ from $C$ to $D$ such that every cell of $\cF$ is a cell of $\cC$.  It can be proved by induction on the number $m$ of cells of $\cC$. If $m=2$, then $\cC:C=C_1,C_2=D$ is obviously a path. Let $m>2$ and suppose that any walk from $C$ to $D$ consisting of $k$ cells, with $k<m$, contains a path between these cells. Suppose that not all the cells of $\cC$ are distinct, so there exist $i,j\in\{1,\dots,m\}$ such that $C_i=C_j$, with $j>i$. Consider the sequence $\cC':C=C_1,\dots,C_{i-1},C_j,\dots,C_m=D$, consisting of all the cells in $\cC$ except $C_{i},C_{i+1},\dots,C_{j-1}$.
		$\cC'$ is a walk from $C$ to $D$ having less than $m$ cells, so applying the inductive hypothesis on $\cC'$ we have a path from $C$ to $D$ contained in $\cC'$, that is contained also in $\cC$. 
	\end{rmk} 
\noindent	Let $\cP$ be a non-empty collection of
cells in $\Z^2$. We denote the set of the vertices of $\cP$ by $V(\cP)=\bigcup_{C\in \cP}V(C)$ and the set of the edges of $\cP$ by $E(\cP)=\bigcup_{C\in \cP}E(C)$. Let $C$ and $D$ be two cells of $\cP$. We say that $C$ and $D$ are \textit{connected} if there exists a walk $\cC:C=C_1,\dots,C_m=D$ such that $C_i \in \cP$ for all $i=1,\dots,m$. We denote by $(a_i,b_i)$ the lower left corner of $C_i$ for all $i=1,\dots,m$ and we observe that a walk can change direction in one of the following ways:
	\begin{enumerate}
		\item North, if $(a_{i+1}-a_i,b_{i+1}-b_i)=(0,1)$ for some $i=1,\dots,m-1$;
		\item South, if $(a_{i+1}-a_i,b_{i+1}-b_i)=(0,-1)$ for some $i=1,\dots,m-1$;
		\item East, if $(a_{i+1}-a_i,b_{i+1}-b_i)=(1,0)$ for some $i=1,\dots,m-1$;
		\item West, if $(a_{i+1}-a_i,b_{i+1}-b_i)=(-1,0)$ for some $i=1,\dots,m-1$.
	\end{enumerate}
A non-empty, finite collection $\cP$ of cells in $\Z^2$  is called \textit{polyomino} if any two cells of $\cP$ are connected.
	\begin{figure}[h!]
		\centering
\includegraphics[scale=0.45]{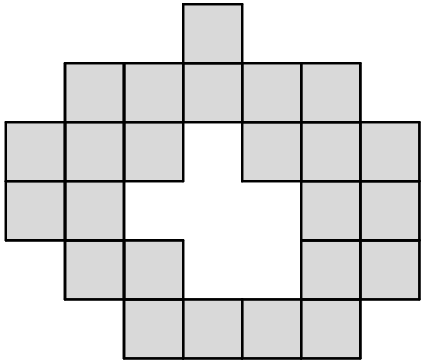}
		\caption{A polyomino.}
		\label{A polyomino}
	\end{figure}

\noindent Let $\cP$ be a polyomino. We say that $\cP$ is \textit{simple} if for any two cells $C$ and $D$ of $\Z^2$, not in $\cP$, there exists a path $\cC: C=C_1,\dots,C_m=D$ such that $C_i\notin \cP$  for all $i=1,\dots,m$.  For example, the polyomino in Figure \ref{A polyomino} is not simple. A finite collection of cells $\cH$ not in $\cP$ is a \textit{hole} of $\cP$ if any two cells $F$ and $G$ of $\cH$ are connected by a path $\cF:F=F_1,\dots,F_t=G$ such that $F_j\in \cH$ for all $j=1,\dots,t$ and $\cH$ is maximal with respect to set inclusion. We observe that each hole of a polyomino $\cP$ is a simple polyomino and $\cP$ is simple if and only if it does not have any hole. Moreover, it is easy to see that a non-simple polyomino has a finite number of holes.  We say that a cell $E$ of $\Z^2$ is \textit{external} to $\cP$ if it satisfies one of the two following conditions: $E \notin \cP \cup \mathcal{H}_1\cup \cdots \cup \mathcal{H}_n$ if $\cP$ is a non-simple polyomino and $\mathcal{H}_1,\ldots,\mathcal{H}_n$ are the holes of $\cP$, or $E\notin \cP$ if $\cP$ is a simple polyomino. The set of the cells of $\Z^2$ external to $\cP$ is called the \textit{exterior} of $\cP$. If $\mathcal{U}$ is the exterior of $\cP$, then we observe that any two cells of $\mathcal{U}$ are connected in $\mathcal{U}$. A proper interval $[a,b]$ is called an \textit{inner interval} of $\cP$ if all cells of $[a,b]$ belong to $\cP$. An interval $[a,b]$ with $a=(i,j)$, $b=(k,j)$ and $i<k$ is called a \textit{horizontal edge interval} of $\cP$ if the sets $\{(\ell,j),(\ell+1,j)\}$ are edges of cells of $\cP$ for all $\ell=i,\dots,k-1$. In addition, if $\{(i-1,j),(i,j)\}$ and $\{(k,j),(k+1,j)\}$ do not belong to $E(\cP)$, then $[a,b]$ is called a \textit{maximal} horizontal edge interval of $\cP$. Similarly, we define \textit{vertical edge intervals} and \textit{maximal} vertical edge intervals. We say that an edge of a cell of $\cP$ is a \textit{border edge} if it is not an edge of any other cell of $\cP$. A \textit{horizontal border edge of $\cP$} is defined to be an horizontal edge interval of $\cP$ consisting of border edges of cells of $\cP$. Similarly we define the \textit{vertical border edge of $\cP$}. The union of the closures of the border edges of $\cP$ is called \textit{perimeter} of $\cP$. \\
Let $A$ and $B$ be two cells of $\Z^2$ with lower left corners $(i,j)$ and $(k,l)$, respectively. The \textit{cell interval}, denoted by $[A,B]$, is the set of the cells of $\Z^2$ with lower left corner $(r,s)$ for $i\leqslant r\leqslant k$ and $j\leqslant s\leqslant l$. If $(i,j)$ and $(k,l)$ are in horizontal position, we say that the cells $A$ and $B$ are in horizontal position. Similarly, we define two cells in vertical position. Let $A$ and $B$ be two cells of $\cP$ in vertical or horizontal position. The cell interval $[A,B]$ is called a
 \textit{block of $\cP$ of length n} if any cell $C$ of $[A,B]$ belongs to $\cP$ and $n$ is the number of cells in $[A,B]$. The cells $A,B$ are called the \emph{extremal cells} of the block $[A,B]$. The block $[A,B]$ is defined  to be \textit{maximal} if there does not exist any block $[A',B']$ of $\cP$ such that $[A,B]\subset [A',B']$. Moreover if $A$ and $B$ are in vertical (resp. horizontal) position, then $[A,B]$ is also called a \textit{maximal vertical (resp. horizontal) block} of $\cP$. Observe that to each interval $I$ of $\Z^2$ we can attach obviously a cell interval of $\Z^2$, which we indicate by $\cP(I)$.\\ 
\noindent	We follow \cite{Trento} and we call a \textit{zig-zag walk} of $\cP$ a sequence $\cW:I_1,\dots,I_\ell$ of distinct inner intervals of $\cP$ where, for all $i=1,\dots,\ell$, the interval $I_i$ has either diagonal corners $v_i$, $z_i$ and anti-diagonal corners $u_i$, $v_{i+1}$ or anti-diagonal corners $v_i$, $z_i$ and diagonal corners $u_i$, $v_{i+1}$, such that:
	\begin{enumerate}
		\item $I_1\cap I_\ell=\{v_1=v_{\ell+1}\}$ and $I_i\cap I_{i+1}=\{v_{i+1}\}$, for all $i=1,\dots,\ell-1$;
		\item $v_i$ and $v_{i+1}$ are on the same edge interval of $\cP$, for all $i=1,\dots,\ell$;
		\item for all $i,j\in \{1,\dots,\ell\}$ with $i\neq j$, there exists no inner interval $J$ of $\cP$ such that $z_i$, $z_j$ belong to $J$.
	\end{enumerate}
	\begin{figure}[h]
	\centering
	\includegraphics[scale=0.55]{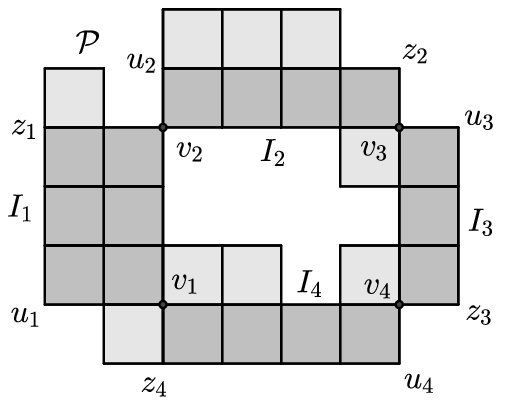}
	\caption{An example of a zig-zag walk of $\cP$.}
	\end{figure}

\noindent	Let $\cP$ be a polyomino. Let $K$ be a field and $S=K[x_v\mid v\in V(\cP)]$. To each proper interval $[a,b]$, where $a$ and $b$ are the diagonal corners, $c$ and $d$ the anti-diagonal ones, we associate the binomial $x_ax_b-x_cx_d$. If $[a,b]$ is an inner interval, the binomial $x_ax_b-x_cx_d$ is called an \textit{inner 2-minor} of $\cP$. The ideal $I_{\cP}\subset S$ generated by all the inner 2-minors of $\cP$ is called the \textit{polyomino ideal of $\cP$} and $K[\cP] = S/I_{\cP}$ the coordinate ring of $\cP$.\\
We conclude this section recalling some notations and definitions contained in \cite{Shikama}. Moreover, we provide a more general version of \cite[Lemma 2.2]{Shikama}, which is very useful for this work. A binomial $f=f^+-f^-$ in a binomial ideal $J\subset S$ is called \textit{redundant} if it can be expressed as a linear combination of binomials in $J$ of lower degree. A binomial is called \textit{irredundant} if it is not redundant. Moreover, we denote by $V^{+}_f$ the set of the vertices $v$, such that $x_v$ appears in $f^+$, and by $V^{-}_f$ the set of the vertices $v$, such that $x_v$ appears in $f^-$.
	
	\begin{lemma}\label{Lemma shikama1}
	Let $\cP$ be a polyomino and $\phi: S\rightarrow T$ a ring homomorphism with $T$ an integral domain. Let $J=\ker \phi$ and $f=f^+-f^-$ be a binomial in $J$ with $\deg f\geq3$. Suppose that: \begin{itemize}
	    \item $I_\cP \subseteq J$;
	    \item $\phi(x_r)\neq 0$ for all $r\in V(\cP)$. 
	    \end{itemize}
	    If there exist three vertices $p,q\in V^+_{f}$ and $r\in V^-_{f}$ such that $p,q$ are diagonal (respectively  anti-diagonal) corners of an inner interval and $r$ is one of the anti-diagonal (respectively diagonal) corners of the inner interval, then $f$ is redundant in $J$.
    \end{lemma}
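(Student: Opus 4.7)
The plan is to mimic the argument of \cite[Lemma 2.2]{Shikama}, adapted to the more abstract setting of an arbitrary ring homomorphism $\phi\colon S\to T$ into an integral domain. The idea is to subtract off a multiple of the inner $2$-minor supported on the interval given by the hypothesis, thereby lowering the degree by one while staying inside $J$. Concretely, denote by $s$ the remaining corner of the inner interval, chosen so that in the first case of the hypothesis $g:=x_px_q-x_rx_s$ is an inner $2$-minor of $\cP$; by assumption $g\in I_\cP\subseteq J$. The anti-diagonal case is symmetric: one only has to swap the roles of diagonal and anti-diagonal corners in the definition of $g$.

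The central step is a short computation. First I would write $f^+=x_px_q\,m$ and $f^-=x_r\,n$ for suitable monomials $m,n\in S$; this is permitted since $p,q$ are distinct diagonal corners of a proper interval and both lie in $V^+_f$, while $r\in V^-_f$. Then
$$f-m\,g\;=\;x_rx_sm-x_rn\;=\;x_r\bigl(x_sm-n\bigr)\in J.$$
Here is where the hypotheses on $\phi$ enter: applying $\phi$ yields $\phi(x_r)\,\phi(x_sm-n)=0$ in the integral domain $T$, and since $\phi(x_r)\neq 0$ the factor $h:=x_sm-n$ must itself lie in $J$.

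Reading off $f=m\,g+x_r\,h$, one sees that $g$ is a binomial of $J$ of degree $2$ and $h$ is a binomial of $J$ of degree $\deg f-1$; both have degree strictly less than $\deg f\geq 3$, which matches the definition of $f$ being redundant in $J$. I do not anticipate any serious obstacle: the computation is essentially forced once $g$ is isolated, and the full strength of the hypotheses is used in exactly one place, namely to cancel the $x_r$ factor and pass from $x_r h\in J$ to $h\in J$. This is precisely the point where the present formulation differs from Shikama's original lemma, which carried out the analogous cancellation inside a specific toric ring rather than through an abstract domain hypothesis.
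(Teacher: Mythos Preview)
Your argument is correct and matches the paper's proof essentially line for line: your $g$, $m$, $n$, $h$ are precisely the paper's $f_I$, $\dfrac{f^+}{x_px_q}$, $\dfrac{f^-}{x_r}$, $f_J$, and the decomposition $f=mg+x_rh$ together with the cancellation of $x_r$ via $\phi(x_r)\neq 0$ in the domain $T$ is exactly what the paper does.
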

\begin{proof}
Let $I$ be the inner interval of $\cP$, such that $p$,$q$ are the diagonal vertices and $r$ is an anti-diagonal one. We denote by $s$ the other corner of $I$. We set $f_I=x_px_q-x_rx_s$ and $f_J=x_s\frac{f^+}{x_px_q}-\frac{f^-}{x_r}$. We have:
$$
    f=f^+-f^-=\Big(x_px_q-x_rx_s\Big)\frac{f^+}{x_px_q}+x_r\Bigg(x_s\frac{f^+}{x_px_q}-\frac{f^-}{x_r}\Bigg)=f_I\frac{f^+}{x_px_q}+x_rf_J.
$$
 Since $I_\cP \subseteq J$, it follows that $f_I\in J$. Since $f, f_I\in J$, we have $x_r f_{J}\in J$. Moreover $\phi(x_r)\neq 0$ and $T$ is a domain, so $f_{J}\in J$.  We observe that $\deg f_I$ and $\deg f_J$ are strictly less than $\deg f$, so we have the desired conclusion.
\end{proof}

\noindent Observe that the same claim of the previous result holds also if $p,q\in V^-_f$ and $r\in V^+_f$, by the same argument.
	
	\section{Closed paths} \label{Section Closed path}
\begin{defn}\label{def closed path}\rm 
		A  polyomino $\cP$ is called a \textit{closed path} if it is a sequence of cells $A_1,\dots,A_n, A_{n+1}$, $n>5$, such that:
	\begin{enumerate}
		\item $A_1=A_{n+1}$;
		\item $A_i\cap A_{i+1}$ is a common edge, for all $i=1,\dots,n$;
		\item $A_i\neq A_j$, for all $i\neq j$ and $i,j\in \{1,\dots,n\}$;
		\item For all $i\in\{1,\dots,n\}$ and for all $j\notin\{i-2,i-1,i,i+1,i+2\}$ then $A_i\cap A_j=\emptyset$, where $A_{-1}=A_{n-1}$, $A_0=A_n$, $A_{n+1}=A_1$ and $A_{n+2}=A_2$. 
	\end{enumerate}
	
	\begin{figure}[h]
	\centering
	\includegraphics[scale=0.75]{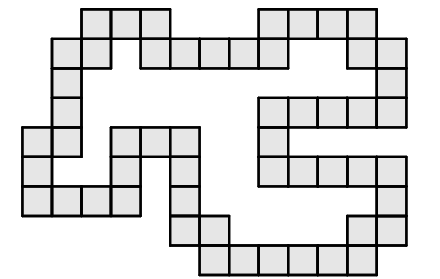}
	\caption{A closed path.}
	\end{figure}
\end{defn}

\noindent Intuitively, a closed path is a path in which the two ends meet and the cells have a common edge only with the previous and next ones. Roughly speaking, it is similar to a pearl necklace on a table. The assumption $n>5$ is 
not restrictive, in fact it is known that all polyominoes with less than 6 cells are  simple polyominoes (see for instance \cite{golomb}), so they are well known for what concerns the primality of $I_\cP$ and other properties of such an ideal.

\begin{rmk}\label{simple-remark}\rm
Let $\cP$ be a closed path and $A_1,A_2,\dots,A_n,A_{n+1}=A_1$ the sequence of cells of $\cP$ having the properties in Definition \ref{def closed path}. Let $i\in \{1,\ldots,n\}$ and consider the cells $A_{i-1},A_i,A_{i+1}$. Up to reflections or rotations we have only one of the two arrangements described in Figure~\ref{fig:simple} (A) and (B). 
\begin{figure}[h]
\subfloat[][]{\includegraphics[scale=0.8]{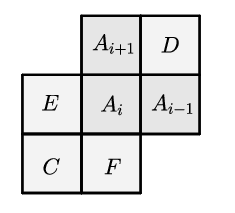}}\qquad\qquad  \qquad
\subfloat[][]{\includegraphics[scale=0.8]{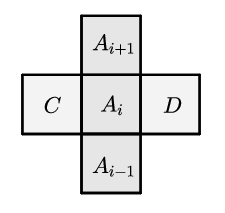}}
\caption{}
\label{fig:simple}
\end{figure}
\begin{enumerate}
\item Referring to Figure~\ref{fig:simple}(A), without loss of generality, we can suppose that $i=1$, so $i+1=2$ and $i-1=n$, otherwise it suffices to rename the indices. 
We prove that $C,D,E,F$ do not belong to $\cP$. Suppose that $E$ belongs to $\cP$. 
Since $E\cap A_1\neq \emptyset$, from condition (4) of Definition~\ref{def closed path} we have that  
$E=A_3$ or $E=A_{n-1}$, which contradicts (2) of Definition~\ref{def closed path}, because $E\cap A_{2}$ and $E\cap A_{n}$ are not edges. So $E$ does not belong to $\cP$. The same holds for the cells $C$ and $F$ by similar arguments. Moreover, from condition (4) of Definition~\ref{def closed path} it follows that $D$ is not in $\cP$.  
By similar arguments it is possible to show that the cells $C$ and $D$ as in Figure~\ref{fig:simple}(B) do not belong to $\cP$. \\


\item Observe that for every cell $H$ not belonging to $\cP$ and for every cell $A$ belonging to $\cP$ 
 and not placed as the cell $A_i$ in Figure~\ref{fig:simple} (A) there exists a path of cells $H=H_1,\dots,H_m$ not belonging to $\cP$ such that $H_m\cap A$ is an edge of $H_m$ and $A$. In fact it is possible to consider a walk $\cC_1:H=F_1,\dots,F_r=G$ of cells not in $\cP$ linking $H$ to a cell $G$ not in $\cP$ and having an edge in common with a cell $A_k$ of $\cP$. We may assume that $k=1$, otherwise it suffices to rename the indices. If $A=A_1$, then $\cC_1$ is a walk of cells not in $\cP$ such that $F_r\cap A$ is an edge of $F_r$ and $A$, and by Remark \ref*{Remark: walk diventa path} we obtain a desired path. If $A\neq A_1$, then we can consider another walk $\cC_2:G=G_1,\dots,G_t$ such that $G_j \notin \cP$ for all $j=1,\dots,t$ and $G_t\cap A$ is an edge of $G_t$ and $A$, obtained travelling along the perimeter of $\cP$ with the condition (2) of the Definition~\ref{def closed path} and using the point (1) of this Remark. Considering the walk $\cC_1\cup\cC_2$, we have a desired path by Remark \ref*{Remark: walk diventa path}.
\end{enumerate}
\end{rmk}

 	\begin{lemma}\label{A closed path contains a block}
		Let $\cP$ be a closed path. Then $\cP$ contains a block of length at least 3.
	\end{lemma}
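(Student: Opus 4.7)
The plan is to argue by contradiction. Suppose $\cP$ contains no block of length $\geq 3$; in particular, no three consecutive cells $A_{i-1},A_i,A_{i+1}$ lie on a common horizontal or vertical line. For each $i\in\{1,\ldots,n\}$, let $d_i\in\{N,S,E,W\}$ denote the direction of the step from $A_i$ to $A_{i+1}$, with indices taken cyclically modulo $n$. The assumption gives $d_i\neq d_{i-1}$, while $d_i=-d_{i-1}$ would force $A_{i+1}=A_{i-1}$, violating distinctness. Hence $d_i$ is orthogonal to $d_{i-1}$ for every $i$, so the steps alternate between horizontal and vertical and $n$ is even.

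Next I would rule out ``U-turns'' by showing $d_{i+1}\neq -d_{i-1}$ for every $i$. Otherwise the four cells $A_{i-1},A_i,A_{i+1},A_{i+2}$ would form a $2\times 2$ square, so $A_{i-1}$ and $A_{i+2}$ would share an edge, giving $V(A_{i-1})\cap V(A_{i+2})\neq\emptyset$. Since $n>5$ we have $i+2\not\equiv i-3\pmod n$, so the index $i+2$ does not lie cyclically in the set $\{i-3,i-2,i-1,i,i+1\}$; this contradicts condition~(4) of Definition~\ref{def closed path} applied with index $i-1$. Combined with the previous paragraph, this forces $d_{i+1}=d_{i-1}$ for every $i$, so all odd-indexed steps coincide with a single direction $d_1$ and all even-indexed steps coincide with a single direction $d_2$ orthogonal to $d_1$.

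Finally, since the path closes up, the total displacement around the cycle must vanish: $\sum_{i=1}^{n} d_i=(0,0)$. By the preceding structure the sum equals $(n/2)\,d_1+(n/2)\,d_2$, a vector whose two coordinates have absolute value $n/2\geq 3$, hence nonzero. This contradiction gives the result. The main obstacle is the exclusion of U-turns, since it is the only step in which condition~(4) of Definition~\ref{def closed path} is invoked in a nonlocal way; the remainder is a short parity and closing-up calculation.
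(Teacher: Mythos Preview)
Your proof is correct and follows essentially the same staircase argument as the paper: both conclude that, absent a block of length three, consecutive steps must alternate between a fixed horizontal and a fixed vertical direction, and then contradict closure. Your treatment is in fact more explicit than the paper's at the U-turn exclusion, where you invoke condition~(4) of Definition~\ref{def closed path} directly while the paper only writes ``Continuing these arguments''; the final contradiction (nonzero total displacement versus strictly increasing lower-left corners) is cosmetically different but equivalent.
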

	\begin{proof}
		We suppose that $\cP$ does not contain any block of length $n\geqslant 3$. We fix a cell $A$ of $\cP$ with lower left corner $a$. After a shift of coordinates, we may assume that $a=(1,1)$. Since $\cP$ is a closed path, there exists a cell $A_2$, which has an edge in common with $A$. We may assume that the lower left corner of $A_2$ is $a_2=(2,1)$.
		$\cP$ is a closed path, then there exists a cell $B_2$, different from $A$, such that $A_2\cap B_2$ is an edge of $A_2$ and $B_2$. If the lower left corner of $B_2$ is $(3,1)$, then $\{A, A_2, B_2\}$ is a block of length three, a contradiction. We may assume that the lower left corner of $B_2$ is $b_2=(2,2)$. Continuing these arguments, we find a sequence of cells of $\cP$, namely $A, A_2, B_2, \dots, A_m, B_m, \dots$, where the lower left corners of $A_m$ and $B_m$ are respectively $a_m=(m,m-1)$ and $b_m=(m,m)$, for all $m\geqslant 2$. Since $\cP$ is a closed path, there exists  $\overline{m}\in \N\setminus \{0,1\}$ such that $A_{\overline{m}}=A$ or $B_{\overline{m}}=A$, that is $a_{\overline{m}}=(1,1)$ or $b_{\overline{m}}=(1,1)$. It is a contradiction because $a_{m}>(1,1)$ and $b_{m} >(1,1)$, for all $m\geqslant 2$.
	\end{proof}
 \noindent According to \cite{Simple equivalent balanced}, we recall that a \textit{rectilinear polygon} is a polygon whose edges meet orthogonally and it is called \textit{simple} if there does not exist any self-intersection. In particular if $\mathfrak{C}$ is a rectilinear polygon, then the area bounded by $\mathfrak{C}$ is called the \textit{interior} of $\mathfrak{C}$.
\begin{prop}\label{P is a not closed path}
	Let $\cP:A_1,\dots,A_n, A_{n+1}$ be a closed path. Then the following hold:
	\begin{enumerate}
	\item $\cP$ is a non-simple polyomino.
	\item $\cP$ has a unique hole.
	\item Let $\cP'$ be the polyomino consisting of all the cells of $\cP$ except $A_i,A_{i+1},\ldots,A_{i+r}$ for some $i\in \{1,\ldots,n\}$ and $1\leq r<n-1$, where all indices are reduced modulo $n$. Then $\cP'$ is a simple polyomino. 
\end{enumerate}
\end{prop}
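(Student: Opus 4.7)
The plan is to address (1) and (2) together via a Jordan-curve style argument on the planar region occupied by $\cP$, then deduce (3) by showing that removing a consecutive arc of cells topologically fills in the unique hole.

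For (1) and (2), I consider the closed planar region $\overline{\cP} := \bigcup_{i=1}^{n}\overline{A_i} \subset \RR^2$, where $\overline{A_i}$ denotes the topological closure of the cell $A_i$. Conditions (2)--(4) of Definition~\ref{def closed path}, together with Remark~\ref{simple-remark}(1), guarantee that consecutive cells share a full edge while non-consecutive cells share no vertex, which is exactly what is needed for $\overline{\cP}$ to be homeomorphic to an annulus $S^1 \times [0,1]$. By the Jordan--Schoenflies theorem, $\RR^2 \setminus \overline{\cP}$ has exactly two connected components: an unbounded one (the exterior) and a bounded one. To see that the bounded component contains an entire cell of $\ZZ^2$, I invoke Lemma~\ref{A closed path contains a block} to produce a block $A_j, A_{j+1}, A_{j+2}$, horizontal after a rotation. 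By Remark~\ref{simple-remark}(1) the cells $U$ and $L$ directly above and below $A_{j+1}$ both lie outside $\cP$, and exactly one of them lies in the bounded component, since the walk $A_1,\ldots,A_n,A_1$ encloses one side of the block and leaves the other side unbounded. That cell, together with every other cell of $\ZZ^2$ lying in the bounded component, is a connected set of cells not in $\cP$ which is maximal for inclusion; this is the unique hole. Thus (1) holds (the hole is nonempty, so $\cP$ is not simple) and (2) holds (connectedness of the bounded component gives uniqueness).

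For (3), let $\cP'$ be obtained from $\cP$ by removing a consecutive arc $A_i, A_{i+1}, \ldots, A_{i+r}$. The geometric picture is that $\overline{\cP'}$ is homeomorphic to a disk, so $\RR^2 \setminus \overline{\cP'}$ is connected and $\cP'$ has no hole. To turn this into a combinatorial statement, I fix two cells $C, D \notin \cP'$ and build a path of cells not in $\cP'$ connecting them, by casework on where each cell lies: (a) in the exterior of $\cP$, (b) in the hole of $\cP$, or (c) among the removed cells $\{A_i, \ldots, A_{i+r}\}$. Case (a) is immediate since the exterior of $\cP$ is a connected collection of non-$\cP$ cells, all of which also avoid $\cP'$. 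In case (c), each $A_k$ in the arc has a neighbor outside $\cP$ (by Remark~\ref{simple-remark}(1)) which is not in $\cP'$, reducing to cases (a) or (b). For case (b), the crucial point is that the removed arc now bridges the original hole to the original exterior: for each $A_k$ one of its edge-neighbors is a hole cell and another is an exterior cell (again by Remark~\ref{simple-remark}(1)), while $A_k$ itself is not in $\cP'$, so one can cross from the hole side to the exterior side through the vacated position of $A_k$. Chaining such short crossings with moves inside the original hole and inside the original exterior yields the desired path.

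The main obstacle will be making the annulus claim for $\overline{\cP}$ rigorous: the topological picture is clear, but the formal argument must exclude any possibility that $\overline{\cP}$ pinches at a single vertex, which is precisely what condition (4) of Definition~\ref{def closed path} is designed to prevent. A more elementary alternative avoiding Jordan--Schoenflies is to describe explicitly the inner and outer border edges of $\cP$ as two disjoint simple closed rectilinear polygons, apply the planar Jordan curve theorem for polygons to each, and identify the hole with the annular region trapped between them. This route is more computational but remains within the combinatorial setup already recalled in Section~\ref{Section Introductioon}.
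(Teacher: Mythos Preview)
Your overall strategy is sound and the argument for (3) matches the paper's almost exactly. For (1) and (2), however, you take a genuinely different route: the paper never asserts that $\overline{\cP}$ is an annulus or invokes Jordan--Schoenflies. Instead it explicitly builds \emph{one} simple rectilinear polygon $\mathfrak{C}$ out of selected border edges (those lying on the right-hand side with respect to a fixed clockwise orientation of the path), verifies combinatorially that $\mathfrak{C}$ is a simple closed polygon, and then shows---using the block of length $\geq 3$ from Lemma~\ref{A closed path contains a block} exactly as you do---that the two cells above and below the middle cell of this block fall on opposite sides of $\mathfrak{C}$. Non-simplicity and uniqueness of the hole are then argued separately, the latter via Remark~\ref{simple-remark}(2) rather than via connectedness of a bounded complementary region. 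Your topological approach is cleaner and handles (1) and (2) in one stroke; the paper's is more elementary and stays entirely within the combinatorial vocabulary of Section~\ref{Section Introductioon}. The ``more elementary alternative'' you sketch at the end is close to what the paper actually does, except that the paper gets away with constructing only one boundary polygon rather than both.

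One correction: your sentence ``non-consecutive cells share no vertex'' is not quite right. Condition~(4) of Definition~\ref{def closed path} only forbids shared vertices for indices differing by more than $2$; cells $A_i$ and $A_{i+2}$ may well share a single vertex (this is exactly what happens at every change of direction, cf.\ Figure~\ref{fig:simple}(A)). This does not break the annulus claim---locally the three cells $A_i,A_{i+1},A_{i+2}$ still form a disk---but your justification needs to accommodate these corner vertices rather than exclude them.
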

\begin{proof}
1) 
 Firstly we show that there exist two cells not belonging to $\cP$ and a simple rectilinear polygon $\mathfrak{C}$, consisting of the union of the closures of certain border edges of $\cP$, such that the two cells are both neither in the interior of $\mathfrak{C}$ nor in the exterior of $\mathfrak{C}$. 
Consider a change of direction of $\cP$ consisting of the cells $R$, $S$ and $T$ and we do opportune rotations of $\cP$ in order to have $\{R,S,T\}$ as in Figure \ref{dim05} (A). We set $S=A_1$ and, walking clockwise along the path, we label the cells of $\cP$ increasingly from $A_1$ to $A_n$. It is not restrictive to assume $R=A_n$ and $T=A_2$. Observe that a such labelling induces a natural orientation along the perimeter of $\cP$. Let $\mathfrak{C}$ be the union of the closures of the border edges of $\cP$ having the following property: if $r$ is a border edge of a cell $A_i$ then $\overline{r}\in \mathfrak{C}$ if it has the cell $A_i$ on its right with respect to the fixed orientation on the perimeter of $\cP$.
 	\begin{figure}[h]
 	\centering
 	\subfloat[][]{\includegraphics[scale=0.8]{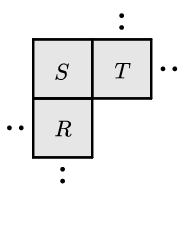}}	\qquad\qquad
 	\subfloat[][The arrows indicate the clockwise orientation of $\mathfrak{C}$.]{\includegraphics[scale=1.2]{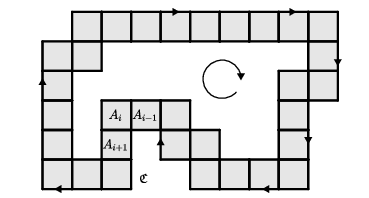}}
   \caption{}
 		\label{dim05}
 	\end{figure}
 	We prove that $\mathfrak{C}$ is a simple rectilinear polygon. Observe that $\mathfrak{C}$ is the union of orthogonal line segments by construction, so if $\mathfrak{C}$ is a polygon then it is also rectilinear. We show firstly that $\mathfrak{C}$ is a polygon. We denote by $r_1$ the border edge of $A_1$ having a vertex in common with $A_2$. Let $\overline{r}_1,\overline{r}_2,\ldots$ be the sequence of the closures of the border edges belonging to $\mathfrak{C}$, obtained following the clockwise orientation of the perimeter of $\cP$ starting from $\overline{r}_1$. For all $i\in\{2,\dots,n-1\}$ considering three consecutive cells $A_{i-1},A_{i}$ and $A_{i+1}$ of $\cP$, the possible arrangements of $\overline{r}_{j}$ and  $\overline{r}_{j+1}$ are displayed in Figure \ref*{fig:ispolygon}, up to just rotations. Then it is easy to see that $\overline{r}_{j}\cap \overline{r}_{j+1}$ is exactly a common endpoint of the two segments $\overline{r}_j$ and $\overline{r}_{j+1}$ for all $j$. Moreover, since $A_n$ and $A_1$ have an edge in common, there exists $m\in \NN$ such that $r_m$ is the border edge of $A_1$ where $r_m\cap r_1$ is the upper left corner of $A_1$, so	$\overline{r}_m\cap \overline{r}_1$ is a common endpoint of $\overline{r}_m$ and $\overline{r}_1$.  
 	\begin{figure}[h]
 		\subfloat[][]{\includegraphics[scale=0.8]{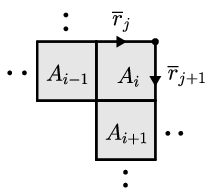}}\qquad  \qquad
 		\subfloat[][]{\includegraphics[scale=0.8]{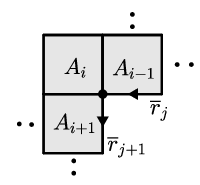}}\qquad \qquad
 		\subfloat[][]{\includegraphics[scale=0.8]{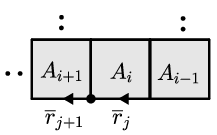}}
 		\caption{}
 		\label{fig:ispolygon}
 	\end{figure}
 	Therefore, $\mathfrak{C}$ is a rectilinear polygon. We prove that $\mathfrak{C}$ is simple. First of all, we recall that the clockwise orientation along $\cP$ induces an analogous orientation along the polygon $\mathfrak{C}$. By contradiction we suppose that $\mathfrak{C}$ is not simple, so there exists a self-intersection. Considering the orientation of $\mathfrak{C}$, we can distinguish exactly three cases up to just rotations, described in Figure~\ref{fig:selfintersect}, where the lines $a,b$ and $c$ belong to $\mathfrak{C}$. In the first case in Figure~\ref{fig:selfintersect} (A) we obtain that the common edge of $A_i$ and $A_{i+1}$ belongs to $\mathfrak{C}$, but this is a contradiction since $\mathfrak{C}$ contains only border edges. The same contradiction rises in the second and third case, considering respectively the common edge of $A_{i+1}$ and $A_j$ as in Figure~\ref{fig:selfintersect} (B), and the common edge of $A_{i}$ and $A_j$ as in Figure~\ref{fig:selfintersect} (C). Therefore $\mathfrak{C}$ is a simple rectilinear polygon; for instance, see Figure \ref{dim05} (B).
 	\begin{figure}[h]
 		\subfloat[][]{\includegraphics[scale=0.8]{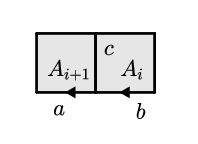}}\qquad  \qquad
 		\subfloat[][]{\includegraphics[scale=0.8]{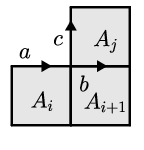}}\qquad \qquad
 		\subfloat[][]{\includegraphics[scale=0.8]{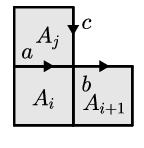}}
 		\caption{}
 		\label{fig:selfintersect}
 	\end{figure}
 	\noindent 
  In general it is easy to see geometrically that, walking clockwise along the perimeter of a rectilinear simple polygon, the interior of the polygon is on the right of the perimeter. Hence the cells of $\cP$ are all situated in the interior of $\mathfrak{C}$. By Lemma~\ref{A closed path contains a block} we can consider a part of $\cP$ arranged as in Figure~\ref{una3} (A), up to just rotations. By Remark \ref{simple-remark} (1) we have that $C$ and $D$ do not belong to $\cP$. We prove that $C$ and $D$ are neither both internal or both external to the polygon bounded by $\mathfrak{C}$. We denote by $r_C$ and $r_D$ the edges respectively of $C$ and $D$ that are border edges of $A_i$. We observe that either $\overline{r}_{C}\in \mathfrak{C}$ or $\overline{r}_{D}\in \mathfrak{C}$. We may assume that $\overline{r}_{C}\in \mathfrak{C}$, so $\overline{r}_{C}$ belongs to an edge of $\mathfrak{C}$, whose orientation goes from South to North, with reference to Figure \ref{una3} (B). 
 	In such a case $C$ is external to the polygon bounded by $\mathfrak{C}$. We prove that $D$ is in the interior of $\mathfrak{C}$. Suppose by contradiction that $D$ is external to the polygon bounded by $\mathfrak{C}$, so $D$ is on the left of $\mathfrak{C}$ with respect to its orientation.
 	In such a case, the only possibility is that $A_i$ is on the right of $\mathfrak{C}$ with respect the orientation of $\mathfrak{C}$ and $\overline{r}_D\in\mathfrak{C}$. Therefore $\overline{r}_{D}$ belongs to another edge of $\mathfrak{C}$, whose orientation goes from North to South. Let $A$ be the cell at North of $A_i$. The situation described above is summarized in Figure \ref{una3} (B).
 	\begin{figure}[h]
 		\subfloat[][]{\includegraphics[scale=0.7]{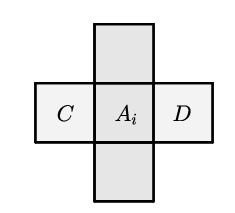}}\qquad  \qquad
 		\subfloat[][]{\includegraphics[scale=0.7]{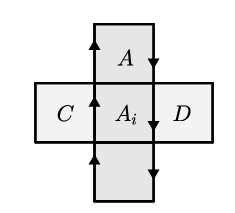}}
 		\caption{}
 		\label{una3}
 	\end{figure}	 
 	\noindent Walking along the edge of $\mathfrak{C}$ containing $\overline{r}_{C}$ we have that $A=A_{i+1}$. Walking along the edge of $\mathfrak{C}$ containing $\overline{r}_{D}$ we have that $A=A_{i-1}$. Then we have $A_{i+1}=A_{i-1}$, that is a contradiction with (3) of Definition \ref{def closed path}. By similar arguments, if we assume that $\overline{r}_D\in\mathfrak{C}$ then $C$ and $D$ are respectively internal and external to $\mathfrak{C}$. We assume without loss of generality that $C$ is internal to $\mathfrak{C}$ and $D$ is external to $\mathfrak{C}$. \\
 	Suppose that $\cP$ is a simple polyomino. Then there exists a path $\cF:F_1,F_2,\dots,F_t$, which connects $C$ and $D$, and $F_k$ does not belong to $\cP$ for all $k\in \{1,\dots,t\}$. Since $C$ is internal to $\mathfrak{C}$ and $D$ is external to $\mathfrak{C}$, there exist $k\in\{1,\dots,t-1\}$ and a border edge $r$ of a cell $F$ of $\cP$ such that $E(F_k)\cap E(F_{k+1})=\{r\}$. We observe that $F$, $F_k$ and $F_{k+1}$ are three cells of $\Z^2$ such that they have the edge $r$ in common and  $F_k\neq F_{k+1}$ because $F_k$,$F_{k+1}$ belong to $\cF$. Then either $F=F_k$ or $F=F_{k+1}$. But it is a contradiction because $F\in \cP$ and $F\notin \cP$ at the same time. Therefore $\cP$ is a non-simple polyomino. \\
 	2) Suppose that $\cP$ has more than one hole. In particular we can assume that $\cP$ has two holes $\cH_1$ and $\cH_2$. Then there exist three cells $B_1,C_1,D_1$ of $\mathbb{Z}^2$ such that $B_1 \in \cH_1$, $C_1 \in \cH_2$ and $D_1$ is in the exterior of $\cP$.
 In particular there does not exist any path of cells not belonging to $\cP$ and linking $B_1$ to $C_1$, $B_1$ to $D_1$  and $C_1$ to $D_1$. By Lemma~\ref{A closed path contains a block} we can consider a part of $\cP$ arranged as in Figure~\ref{fig:simple}(B). Considering the cells $B_1$ and $A_i$, we have by Remark~\ref{simple-remark} (2) that there exists a path $\mathcal{C}_{1}:B_1,\dots,B_m$ of cells not in $\cP$ such that $B_m\cap A_i$ is an edge of $B_m$ and $A_i$. The same holds for $C_1,A_i$ and $D_1,A_i$, hence there exist two paths $\mathcal{C}_{2}:C_1,\dots,C_n$ and $\mathcal{C}_{3}:D_1,\dots,D_r$ of cells not in $\cP$ such that $C_n\cap A_i$ is an edge of $C_n$ and $A_i$ and $D_r\cap A_i$ is an edge of $D_r$ and $A_i$.
For the shape of this configuration then, among those paths, there are two, for instance $\mathcal{C}_{1}$ and $\mathcal{C}_2$, having $C$ or $D$ as the last cells. Let $\cC_2^{rev}$ be the path obtained by $\cC_2$ inverting the order of the cells, that is $\cC_2^{rev}:C_1',\dots,C_n'$ where $C_i'=C_{n-i+1}$ for all $i=1,\dots,n$. So, by Remark \ref{Remark: walk diventa path} we have that $\mathcal{C}_1\cup \mathcal{C}_2^{rev}$ contains a path of cells not belonging to $\cP$ linking $B_1$ to $C_1$, that is a contradiction.\\
3) Assume that $r=1$ and suppose that $A_i,A_{i+1}$ are arranged as in Figure~\ref{fig:simple}(A). We can suppose that $E$ belongs to the hole of $\cP$ and $D$ is in the exterior of $\cP$. Let $H_1,H_2$ be two cells not belonging to $\cP'$. Suppose that $H_1$ belongs to the hole of $\cP$ and $H_2$ is exterior to $\cP$, then there exist two paths $\mathcal{C}_1,\mathcal{C}_2$ of cells not belonging to $\cP$ (so, not belonging to $\cP'$) linking $H_1$ to $E$ and $D$ to $H_2$ respectively. We set $\cC':E,A_i,A_{i+1},D$. Therefore, by Remark \ref{Remark: walk diventa path} we have that $\mathcal{C}_{1}\cup \cC'\cup \mathcal{C}_{2}$ contains a path of cells not belonging to $\cP'$ linking $H_1$ to $H_2$. We obtain easily the same conclusion if both $H_1, H_2$ belong to the hole, or both $H_1, H_2$ are in the exterior of $\cP$ and if one between $H_1$ or $H_2$ is the cell $A_i$ or $A_{i+1}$. By similar arguments we obtain the same conclusion in case $A_i,A_{i+1}$ are arranged as in Figure~\ref{fig:simple}(B). So if $r=1$ then $\cP'$ is a simple polyomino. The case $r>1$ can be proved by similar arguments.
\end{proof}

\begin{defn} \rm
\noindent Let $\cP$ be a polyomino. A path of five cells $A_1, A_2, A_3, A_4, A_5$ of $\cP$ is called an \textit{L-configuration} if the two sequences $A_1, A_2, A_3$ and $A_3, A_4, A_5$ go in two orthogonal directions. 
\end{defn}

\begin{figure}[h]
\centering
\includegraphics[scale=0.7]{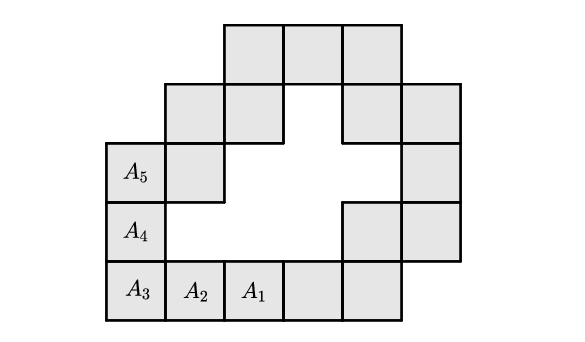}
	\caption{A closed path with an L-configuration.}
	\label{L-configuration}
\end{figure}

\begin{prop}\label{L-conf then no zig zag}
	Let $\cP$ be a closed path. If $\cP$ has at least an L-configuration, then $\cP$ contains no zig-zag walks.
\end{prop}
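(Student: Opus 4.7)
I would proceed by contradiction: assume that a zig-zag walk $\cW: I_1, \ldots, I_\ell$ exists in $\cP$, even though $\cP$ contains an L-configuration $A_1,\ldots,A_5$ with corner cell $A_3$. The preliminary structural step is to show that every inner interval of $\cP$ is contained in a single maximal horizontal or vertical block. This follows because no $2\times 2$ subsquare of cells can lie in $\cP$: such a square would force four cells of $\cP$ with mutually shared edges, but condition~(4) of Definition~\ref{def closed path} prevents cells with cyclic index distance $\geq 3$ from sharing any vertex, and condition~(2) restricts adjacency among cells with cyclic index distance $2$. Consequently every inner interval is a $1\times k$ (or $k\times 1$) strip of consecutive collinear cells, hence a subrectangle of a maximal block.

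After normalising coordinates so that $A_3$ has lower-left corner at the origin, with $A_1, A_2, A_3$ extending horizontally to the left and $A_3, A_4, A_5$ extending vertically upward, I would focus on the inner corner vertex $v^* = (1,1)$. By Remark~\ref{simple-remark}(1) the cell with lower-left corner $(1,1)$ is not in $\cP$, so the horizontal maximal block $B_h \ni A_3$ and the vertical maximal block $B_v \ni A_3$ meet exactly at the cell $A_3$, and any inner interval admitting $v^*$ as a corner lies entirely in $B_h$ or entirely in $B_v$. Since each $I_j$ lies in a single maximal block by the preliminary step, and $\cW$ is a closed cycle, the walk must at some index $k$ switch from a block adjacent to $B_h$ into $B_v$ (or vice versa); such a switch can only occur at a transition vertex $v_{k+1}\in V(A_3)$, because these are the only vertices shared by inner intervals of $B_h$ and $B_v$.

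The heart of the argument is a case analysis on the choice of $v_{k+1}\in V(A_3)$ and on whether $v_{k+1}$ is diagonal or anti-diagonal in $I_k$ and in $I_{k+1}$. For each case I would locate the corners $u_k, z_k, u_{k+1}, z_{k+1}$ inside $B_h$ and $B_v$, use condition~(2) (that $v_j, v_{j+1}$ lie on a common edge interval of $\cP$) to pin down the orientations, and produce an inner interval of $\cP$---namely a subrectangle of $B_h$ or $B_v$---that simultaneously contains two of the $z_j$'s, thereby violating condition~(3) of the zig-zag walk definition. The geometric intuition is that the sharp L-turn at $A_3$ squeezes the $z_j$'s of the two successive intervals (and possibly of $I_{k-1}$ or $I_{k+2}$) into the short block they belong to.

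The main obstacle will be organising the case analysis cleanly: there are several subcases determined by which corner of $A_3$ serves as $v_{k+1}$, whether $I_k$ sits in $B_h$ or $B_v$, and whether $v_{k+1}$ is diagonal or anti-diagonal in each of $I_k$ and $I_{k+1}$; keeping the bookkeeping consistent with condition~(2) and producing, in each case, the offending inner interval containing two $z_j$'s is where the real work lies.
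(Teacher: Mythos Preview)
Your proposal is correct and follows the same route as the paper: argue by contradiction, locate consecutive intervals $I_r$, $I_{r+1}$ of the zig-zag walk lying in the two arms of the L-configuration and meeting at a corner of $A_3$, then show that $z_r$ and $z_{r+1}$ belong to a common inner interval of $\cP$, violating condition~(3). The paper's version is shorter because it observes directly that the transition vertex $v_{r+1}$ is forced to be the outer corner $d$ of $A_3$ (any other choice gives $|I_r\cap I_{r+1}|\ge 2$), so your anticipated case analysis in fact collapses to a single case.
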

\begin{proof}
	We suppose that $\cP$ contains a zig-zag walk $\cW:I_1,\dots,I_\ell$. Let $A_1, A_2, A_3, A_4, A_5$ be an L-configuration. We denote by $a,b$ the diagonal corners of $A_3$ and by $c,d$ the anti-diagonal ones. We may suppose that $A_2\cap A_3=\{a,d\}$ and $A_3\cap A_4=\{d,b\}$, since similar arguments can be used in the other cases. Since $I_i\cap I_{i+1}\neq \emptyset$ for all $i\in \{1,\dots,\ell-1\}$, there exists $r\in \{1,\dots,\ell\}$ such that $A_1,A_2\in \cP(I_r)$ and $A_4,A_5\in \cP(I_s)$, where $s=r+1$ or $s=r-1$, with $I_0=I_{\ell}$ and $I_{\ell+1}=I_1$. We may suppose that $s=r+1$ (see Figure \ref{dim prop L-configuration}). 
	\begin{figure}[h]
		\centering
\includegraphics[scale=0.75]{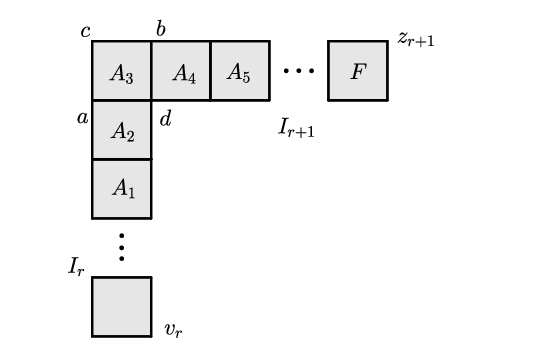}
		\caption{}
		\label{dim prop L-configuration}
	\end{figure}
	We prove that $v_{r+1}=d$. If $v_{r+1}\neq d$, then $\{v_{r+1},d\}\subseteq I_r\cap I_{r+1}$, that is a contradiction. Since $v_{r+1}=d$ and $\cP(I_r)\supseteq \{A_2\}$, the anti-diagonal corner $z_r$ of $I_r$ is equal to the vertex $a$ of $A_3$. Let $F$ be the cell of $\cP$ such that $\cP(I_{r+1})=[A_4,F]$. Then $[z_r,z_{r+1}]=V([A_3,F])$. $V([A_3,F])$ is an inner interval of $\cP$ such that $z_{r},z_{r+1}$ belong to it. This is a contradiction.     
	\end{proof}

\begin{rmk}\rm 	Notice that it is possible to build closed paths, which contain no L-configurations and no zig-zag walks; see Figure \ref{No L-conf no zig zag}.
	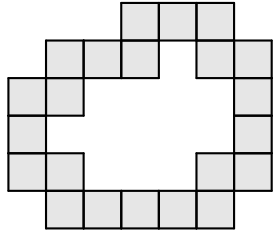
\begin{figure}[h]
		\centering
		\begin{tikzpicture}[line cap=round,line join=round,>=triangle 45,x=1.0cm,y=1.0cm]
		\clip(-6.,-4.) rectangle (-1.5,0.);
		\fill[line width=0.8pt,fill=black,fill opacity=0.10000000149011612] (-5.,-3.5) -- (-4.5,-3.5) -- (-4.5,-3.) -- (-5.,-3.) -- cycle;
		\fill[line width=0.8pt,fill=black,fill opacity=0.10000000149011612] (-5.,-3.) -- (-4.5,-3.) -- (-4.5,-2.5) -- (-5.,-2.5) -- cycle;
		\fill[line width=0.8pt,fill=black,fill opacity=0.10000000149011612] (-5.5,-3.) -- (-5.,-3.) -- (-5.,-2.5) -- (-5.5,-2.5) -- cycle;
		\fill[line width=0.8pt,fill=black,fill opacity=0.10000000149011612] (-5.5,-2.5) -- (-5.,-2.5) -- (-5.,-2.) -- (-5.5,-2.) -- cycle;
		\fill[line width=0.8pt,fill=black,fill opacity=0.10000000149011612] (-5.5,-2.) -- (-5.,-2.) -- (-5.,-1.5) -- (-5.5,-1.5) -- cycle;
		\fill[line width=0.8pt,fill=black,fill opacity=0.10000000149011612] (-4.5,-3.5) -- (-4.,-3.5) -- (-4.,-3.) -- (-4.5,-3.) -- cycle;
		\fill[line width=0.8pt,fill=black,fill opacity=0.10000000149011612] (-5.,-2.) -- (-4.5,-2.) -- (-4.5,-1.5) -- (-5.,-1.5) -- cycle;
		\fill[line width=0.8pt,fill=black,fill opacity=0.10000000149011612] (-5.,-1.5) -- (-4.5,-1.5) -- (-4.5,-1.) -- (-5.,-1.) -- cycle;
		\fill[line width=0.8pt,fill=black,fill opacity=0.10000000149011612] (-4.5,-1.5) -- (-4.,-1.5) -- (-4.,-1.) -- (-4.5,-1.) -- cycle;
		\fill[line width=0.8pt,fill=black,fill opacity=0.10000000149011612] (-4.,-1.5) -- (-3.5,-1.5) -- (-3.5,-1.) -- (-4.,-1.) -- cycle;
		\fill[line width=0.8pt,fill=black,fill opacity=0.10000000149011612] (-4.,-1.) -- (-3.5,-1.) -- (-3.5,-0.5) -- (-4.,-0.5) -- cycle;
		\fill[line width=0.8pt,fill=black,fill opacity=0.10000000149011612] (-3.5,-1.) -- (-3.,-1.) -- (-3.,-0.5) -- (-3.5,-0.5) -- cycle;
		\fill[line width=0.8pt,fill=black,fill opacity=0.10000000149011612] (-3.,-1.) -- (-2.5,-1.) -- (-2.5,-0.5) -- (-3.,-0.5) -- cycle;
		\fill[line width=0.8pt,fill=black,fill opacity=0.10000000149011612] (-3.,-1.5) -- (-2.5,-1.5) -- (-2.5,-1.) -- (-3.,-1.) -- cycle;
		\fill[line width=0.8pt,fill=black,fill opacity=0.10000000149011612] (-2.5,-1.5) -- (-2.,-1.5) -- (-2.,-1.) -- (-2.5,-1.) -- cycle;
		\fill[line width=0.8pt,fill=black,fill opacity=0.10000000149011612] (-2.5,-2.) -- (-2.,-2.) -- (-2.,-1.5) -- (-2.5,-1.5) -- cycle;
		\fill[line width=0.8pt,fill=black,fill opacity=0.10000000149011612] (-2.5,-2.5) -- (-2.,-2.5) -- (-2.,-2.) -- (-2.5,-2.) -- cycle;
		\fill[line width=0.8pt,fill=black,fill opacity=0.10000000149011612] (-2.5,-3.) -- (-2.,-3.) -- (-2.,-2.5) -- (-2.5,-2.5) -- cycle;
		\fill[line width=0.8pt,fill=black,fill opacity=0.10000000149011612] (-3.,-3.) -- (-2.5,-3.) -- (-2.5,-2.5) -- (-3.,-2.5) -- cycle;
		\fill[line width=0.8pt,fill=black,fill opacity=0.10000000149011612] (-3.,-3.5) -- (-2.5,-3.5) -- (-2.5,-3.) -- (-3.,-3.) -- cycle;
		\fill[line width=0.8pt,fill=black,fill opacity=0.10000000149011612] (-3.5,-3.5) -- (-3.,-3.5) -- (-3.,-3.) -- (-3.5,-3.) -- cycle;
		\fill[line width=0.8pt,fill=black,fill opacity=0.10000000149011612] (-4.,-3.5) -- (-3.5,-3.5) -- (-3.5,-3.) -- (-4.,-3.) -- cycle;
		\draw [line width=0.8pt] (-5.,-3.5)-- (-4.5,-3.5);
		\draw [line width=0.8pt] (-4.5,-3.5)-- (-4.5,-3.);
		\draw [line width=0.8pt] (-4.5,-3.)-- (-5.,-3.);
		\draw [line width=0.8pt] (-5.,-3.)-- (-5.,-3.5);
		\draw [line width=0.8pt] (-5.,-3.)-- (-4.5,-3.);
		\draw [line width=0.8pt] (-4.5,-3.)-- (-4.5,-2.5);
		\draw [line width=0.8pt] (-4.5,-2.5)-- (-5.,-2.5);
		\draw [line width=0.8pt] (-5.,-2.5)-- (-5.,-3.);
		\draw [line width=0.8pt] (-5.5,-3.)-- (-5.,-3.);
		\draw [line width=0.8pt] (-5.,-3.)-- (-5.,-2.5);
		\draw [line width=0.8pt] (-5.,-2.5)-- (-5.5,-2.5);
		\draw [line width=0.8pt] (-5.5,-2.5)-- (-5.5,-3.);
		\draw [line width=0.8pt] (-5.5,-2.5)-- (-5.,-2.5);
		\draw [line width=0.8pt] (-5.,-2.5)-- (-5.,-2.);
		\draw [line width=0.8pt] (-5.,-2.)-- (-5.5,-2.);
		\draw [line width=0.8pt] (-5.5,-2.)-- (-5.5,-2.5);
		\draw [line width=0.8pt] (-5.5,-2.)-- (-5.,-2.);
		\draw [line width=0.8pt] (-5.,-2.)-- (-5.,-1.5);
		\draw [line width=0.8pt] (-5.,-1.5)-- (-5.5,-1.5);
		\draw [line width=0.8pt] (-5.5,-1.5)-- (-5.5,-2.);
		\draw [line width=0.8pt] (-4.5,-3.5)-- (-4.,-3.5);
		\draw [line width=0.8pt] (-4.,-3.5)-- (-4.,-3.);
		\draw [line width=0.8pt] (-4.,-3.)-- (-4.5,-3.);
		\draw [line width=0.8pt] (-4.5,-3.)-- (-4.5,-3.5);
		\draw [line width=0.8pt] (-5.,-2.)-- (-4.5,-2.);
		\draw [line width=0.8pt] (-4.5,-2.)-- (-4.5,-1.5);
		\draw [line width=0.8pt] (-4.5,-1.5)-- (-5.,-1.5);
		\draw [line width=0.8pt] (-5.,-1.5)-- (-5.,-2.);
		\draw [line width=0.8pt] (-5.,-1.5)-- (-4.5,-1.5);
		\draw [line width=0.8pt] (-4.5,-1.5)-- (-4.5,-1.);
		\draw [line width=0.8pt] (-4.5,-1.)-- (-5.,-1.);
		\draw [line width=0.8pt] (-5.,-1.)-- (-5.,-1.5);
		\draw [line width=0.8pt] (-4.5,-1.5)-- (-4.,-1.5);
		\draw [line width=0.8pt] (-4.,-1.5)-- (-4.,-1.);
		\draw [line width=0.8pt] (-4.,-1.)-- (-4.5,-1.);
		\draw [line width=0.8pt] (-4.5,-1.)-- (-4.5,-1.5);
		\draw [line width=0.8pt] (-4.,-1.5)-- (-3.5,-1.5);
		\draw [line width=0.8pt] (-3.5,-1.5)-- (-3.5,-1.);
		\draw [line width=0.8pt] (-3.5,-1.)-- (-4.,-1.);
		\draw [line width=0.8pt] (-4.,-1.)-- (-4.,-1.5);
		\draw [line width=0.8pt] (-4.,-1.)-- (-3.5,-1.);
		\draw [line width=0.8pt] (-3.5,-1.)-- (-3.5,-0.5);
		\draw [line width=0.8pt] (-3.5,-0.5)-- (-4.,-0.5);
		\draw [line width=0.8pt] (-4.,-0.5)-- (-4.,-1.);
		\draw [line width=0.8pt] (-3.5,-1.)-- (-3.,-1.);
		\draw [line width=0.8pt] (-3.,-1.)-- (-3.,-0.5);
		\draw [line width=0.8pt] (-3.,-0.5)-- (-3.5,-0.5);
		\draw [line width=0.8pt] (-3.5,-0.5)-- (-3.5,-1.);
		\draw [line width=0.8pt] (-3.,-1.)-- (-2.5,-1.);
		\draw [line width=0.8pt] (-2.5,-1.)-- (-2.5,-0.5);
		\draw [line width=0.8pt] (-2.5,-0.5)-- (-3.,-0.5);
		\draw [line width=0.8pt] (-3.,-0.5)-- (-3.,-1.);
		\draw [line width=0.8pt] (-3.,-1.5)-- (-2.5,-1.5);
		\draw [line width=0.8pt] (-2.5,-1.5)-- (-2.5,-1.);
		\draw [line width=0.8pt] (-2.5,-1.)-- (-3.,-1.);
		\draw [line width=0.8pt] (-3.,-1.)-- (-3.,-1.5);
		\draw [line width=0.8pt] (-2.5,-1.5)-- (-2.,-1.5);
		\draw [line width=0.8pt] (-2.,-1.5)-- (-2.,-1.);
		\draw [line width=0.8pt] (-2.,-1.)-- (-2.5,-1.);
		\draw [line width=0.8pt] (-2.5,-1.)-- (-2.5,-1.5);
		\draw [line width=0.8pt] (-2.5,-2.)-- (-2.,-2.);
		\draw [line width=0.8pt] (-2.,-2.)-- (-2.,-1.5);
		\draw [line width=0.8pt] (-2.,-1.5)-- (-2.5,-1.5);
		\draw [line width=0.8pt] (-2.5,-1.5)-- (-2.5,-2.);
		\draw [line width=0.8pt] (-2.5,-2.5)-- (-2.,-2.5);
		\draw [line width=0.8pt] (-2.,-2.5)-- (-2.,-2.);
		\draw [line width=0.8pt] (-2.,-2.)-- (-2.5,-2.);
		\draw [line width=0.8pt] (-2.5,-2.)-- (-2.5,-2.5);
		\draw [line width=0.8pt] (-2.5,-3.)-- (-2.,-3.);
		\draw [line width=0.8pt] (-2.,-3.)-- (-2.,-2.5);
		\draw [line width=0.8pt] (-2.,-2.5)-- (-2.5,-2.5);
		\draw [line width=0.8pt] (-2.5,-2.5)-- (-2.5,-3.);
		\draw [line width=0.8pt] (-3.,-3.)-- (-2.5,-3.);
		\draw [line width=0.8pt] (-2.5,-3.)-- (-2.5,-2.5);
		\draw [line width=0.8pt] (-2.5,-2.5)-- (-3.,-2.5);
		\draw [line width=0.8pt] (-3.,-2.5)-- (-3.,-3.);
		\draw [line width=0.8pt] (-3.,-3.5)-- (-2.5,-3.5);
		\draw [line width=0.8pt] (-2.5,-3.5)-- (-2.5,-3.);
		\draw [line width=0.8pt] (-2.5,-3.)-- (-3.,-3.);
		\draw [line width=0.8pt] (-3.,-3.)-- (-3.,-3.5);
		\draw [line width=0.8pt] (-3.5,-3.5)-- (-3.,-3.5);
		\draw [line width=0.8pt] (-3.,-3.5)-- (-3.,-3.);
		\draw [line width=0.8pt] (-3.,-3.)-- (-3.5,-3.);
		\draw [line width=0.8pt] (-3.5,-3.)-- (-3.5,-3.5);
		\draw [line width=0.8pt] (-4.,-3.5)-- (-3.5,-3.5);
		\draw [line width=0.8pt] (-3.5,-3.5)-- (-3.5,-3.);
		\draw [line width=0.8pt] (-3.5,-3.)-- (-4.,-3.);
		\draw [line width=0.8pt] (-4.,-3.)-- (-4.,-3.5);
		\end{tikzpicture}
		\caption{A closed path without any $L$-configuration.}
		\label{No L-conf no zig zag}
	\end{figure}
\end{rmk}

\begin{rmk} \rm
If $\cP$ is a closed path and $\mathcal{B}_1,\mathcal{B}_{2}$ are two maximal horizontal (or vertical) blocks of $\cP$, then $|V(\cB_1)\cap V(\cB_{2})|=2$ or $V(\cB_1)\cap V(\cB_{2})=\emptyset$. If $V(\cB_1)\cap V(\cB_{2})=\{a,b\}$ then it is an edge belonging to $E(\mathcal{B}_1)\cap E(\mathcal{B}_{2})$. Observe also that $\cP$ is a union of blocks, not necessarily maximal, with the properties described above.
\end{rmk}

\begin{defn}\rm	Let $\cP$ be a polyomino. Let $\cB=\{\cB_i\}_{i=1,\dots,n}$ be a set of maximal horizontal (or vertical) blocks with length at least two, with $V(\cB_i)\cap V(\cB_{i+1})=\{a_i,b_i\}$, $a_i\neq b_i$ for all $i=1,\dots,n-1$. We say that $\cB$ is a \textit{ladder of $n$ steps} if $[a_i,b_i]$ is not on the same edge interval of $[a_{i+1},b_{i+1}]$ for all $i=1,\dots,n-2$. 
\end{defn}
	
	\begin{figure}[h]
		\centering
\includegraphics[scale=1]{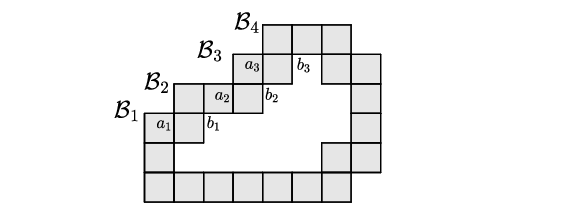}
		\caption{A closed path with a ladder of 4 steps.}
	\end{figure}
	\begin{prop}\label{ladder then no zig zag}
		Let $\cP$ be a closed path. If $\cP$ has a ladder of at least three steps, then $\cP$ contains no zig-zag walks.
	\end{prop}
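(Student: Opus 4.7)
My plan is to mimic the argument of Proposition~\ref{L-conf then no zig zag}, now using the two step edges of a middle block of the ladder in place of the single corner cell of the L-configuration. Suppose, for contradiction, that $\cP$ contains a zig-zag walk $\cW:I_1,\dots,I_\ell$. Since the ladder has at least three steps, I can fix an intermediate block $\cB_i$ with $2\leq i\leq n-1$, so that both step edges $[a_{i-1},b_{i-1}]=V(\cB_{i-1})\cap V(\cB_i)$ and $[a_i,b_i]=V(\cB_i)\cap V(\cB_{i+1})$ exist. Up to rotation I may assume all blocks are horizontal. A short analysis rules out the possibility that $\cB_{i-1}$ and $\cB_{i+1}$ both lie on the same horizontal side of $\cB_i$: otherwise the two shared edges would sit on a common horizontal edge interval of $\cP$ (the top or the bottom edge of $\cB_i$), contradicting the defining condition of a ladder. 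Thus the configuration at $\cB_i$ is a genuine staircase, and I denote by $X,Y$ the two extremal cells of $\cB_i$ and by $X'\in\cB_{i-1}$, $Y'\in\cB_{i+1}$ the cells sharing an edge with $X$ and $Y$, respectively.

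Next, I would adapt the ``junction forcing'' argument of the proof of Proposition~\ref{L-conf then no zig zag}. The shared-vertex condition $I_i\cap I_{i+1}=\{v_{i+1}\}$ of the zig-zag walk, together with the fact that $X'$ and $X$ share two common vertices (and similarly for $Y$ and $Y'$), forces any pair of consecutive intervals of $\cW$ crossing a junction to meet precisely at the inner corner of that junction. Running the same diagonal versus anti-diagonal analysis as in Proposition~\ref{L-conf then no zig zag} at each of the two junctions of $\cB_i$, I would identify a pair of consecutive intervals $I_r,I_{r+1}$ at the first junction and another pair $I_s,I_{s+1}$ at the second, with $z_r$ pinned to a vertex of $X$ and $z_s$ pinned to a vertex of $Y$. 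Since $X$ and $Y$ both lie in $\cB_i$, which (having length at least $2$) determines a proper inner interval of $\cP$ containing all vertices of its cells, both $z_r$ and $z_s$ belong to this common inner interval, contradicting condition~(3) of the zig-zag walk definition.

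The main obstacle will be the bookkeeping of the case analysis: depending on the precise shape of the inner intervals used by $\cW$ at each junction (a $2$-cell vertical rectangle straddling the junction, a single cell of $\cB_{i\pm 1}$ sharing a corner with $\cB_i$, or a horizontal subrectangle of $\cB_i$) and on which diagonal of each interval carries the $v$-corner versus the $z$-corner, several subcases must be checked. The key simplification I would exploit is that $\cB_i$ is one cell wide, which sharply restricts the inner intervals that can surround each of its two junctions, reducing the analysis to a small number of configurations that all lead to the contradiction $z_r,z_s\in V(\cB_i)$.
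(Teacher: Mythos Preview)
Your plan has a genuine gap at the step where you claim that ``$z_r$ is pinned to a vertex of $X$'' and ``$z_s$ is pinned to a vertex of $Y$''. This is precisely where the analogy with Proposition~\ref{L-conf then no zig zag} breaks down, and it is not a matter of bookkeeping.

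In the L-configuration, the two arms meeting at $A_3$ are \emph{orthogonal}. The interval $I_r$ lies in the vertical arm, so its top edge coincides with the bottom edge of $A_3$; hence the two top corners of $I_r$ are exactly $a$ and $d$, both vertices of $A_3$. Since $v_{r+1}=d$, the anti-diagonal partner $z_r$ is forced to be $a\in V(A_3)$, and then the horizontal inner interval $[A_3,F]$ (extending $I_{r+1}$ by one cell) captures both $z_r$ and $z_{r+1}$.

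In a ladder of horizontal blocks, $\cB_{i-1}$ and $\cB_i$ are \emph{parallel}, stacked one atop the other. If $I_r\subseteq\cB_{i-1}$ and $I_{r+1}\subseteq\cB_i$, the junction lies along the \emph{right} edge of $I_r$, not its top edge. Concretely (with $A_1\in\cB_{i-1}$ at $(0,0)$ and $X\in\cB_i$ at $(0,1)$), one finds $v_{r+1}=(1,1)$ is the upper-right (diagonal) corner of $I_r$, so $\{v_r,z_r\}$ is the anti-diagonal pair $\{(x_0,1),(1,0)\}$. Whichever of these is $z_r$, it does not lie in $V(X)=\{(0,1),(1,1),(0,2),(1,2)\}$ unless $I_r$ degenerates to the single cell $A_1$. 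Thus you obtain at most one $z$-corner in $V(\cB_i)$ (namely $z_{r+1}$, coming from the unique interval of $\cW$ inside $\cB_i$), and condition~(3) cannot be invoked.

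The paper's proof avoids this obstacle by abandoning condition~(3) altogether and using condition~(2) instead. With three consecutive steps $\cB_1,\cB_2,\cB_3$, it distinguishes two cases: either three consecutive intervals of $\cW$ sit in $\cB_1,\cB_2,\cB_3$ respectively, in which case $v_{i+1}\in\{a,b\}$ and $v_{i+2}\in\{c,d\}$ cannot lie on a common edge interval of $\cP$; or some $I_j$ is the $2$-cell vertical rectangle straddling a junction, in which case the single-vertex intersection $|I_j\cap I_{j\pm1}|=1$ fails (or again condition~(2) fails when $|\cB_2|=2$). This edge-interval argument is what makes the ladder case work, and it is structurally different from the $z$-corner argument you propose.
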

	\begin{proof}
		Let $\cB=\{\cB_i\}_{i=1,\dots,n}$ be a ladder of $n$ steps. We may assume that $n=3$; for $n>3$ the arguments are similar.
		We can suppose $\cB_1,\cB_2,\cB_3$ are in horizontal position and the ladder is going up, otherwise we can reduce to this case by reflections or rotations (see Figure \ref{ladder dim}).
		\begin{figure}[h]
			\centering
\includegraphics[scale=0.7]{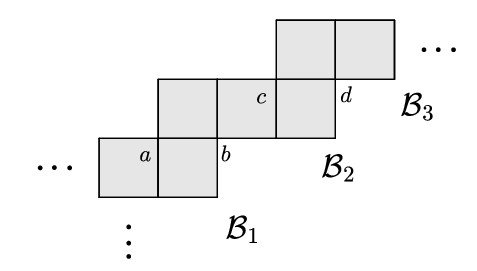}
			\caption{}
			\label{ladder dim}
		\end{figure}
		 Let $a,b,c,d$ be the vertices of $\cP$ such that $V(\cB_1)\cap V(\cB_2)=\{a,b\}$ and $V(\cB_2)\cap V(\cB_3)=\{c,d\}$. We assume that $\cP$ contains a zig-zag walk $\cW: I_1,\dots,I_\ell$. We suppose that there exists $i\in \{1,\dots,\ell\}$ such that $\cP(I_i)\subseteq \cB_1$, $\cP(I_{i+1})\subseteq \cB_2$ and $\cP(I_{i+2})\subseteq \cB_3$. One of the following cases can occur:
		\begin{enumerate}
			\item $I_{i}\cap I_{i+1}=\{a\}$ and  $I_{i+1}\cap I_{i+2}=\{c\}$;
			\item $I_{i}\cap I_{i+1}=\{a\}$ and  $I_{i+1}\cap I_{i+2}=\{d\}$;
			\item $I_{i}\cap I_{i+1}=\{b\}$ and  $I_{i+1}\cap I_{i+2}=\{c\}$;
			\item $I_{i}\cap I_{i+1}=\{b\}$ and  $I_{i+1}\cap I_{i+2}=\{d\}$.
		\end{enumerate}
		If the first one occurs, then $a,c$ should be on the same edge interval, a contradiction. The arguments are similar in the other cases.\\
		Let $A_1$ and $A_2$ be the cells, belonging respectively to $\cB_1$ and $\cB_2$, which have the edge $\{a,b\}$ in common. 
		Now we suppose that there exists $j\in\{1,\dots,\ell\}$ such that $\cP(I_j)$ contains $A_1$ and $A_2$, that is $I_j=V([A_1,A_2])$. Then there exists $r\in\{1,\dots,\ell\}$ such that $\cP(I_r)$ contains at least a cell in $\cB_{2}\cup \cB_3$, where $r=j-1$ or $r=j+1$, with $I_0=I_{\ell}$ and $I_{\ell+1}=I_1$. We may suppose that $r=j+1$. If $\mathcal{B}_2$ contains at least three cells then there does not exist any interval $I\subseteq V(\cB_{2})\cup V(\cB_3)$ such that $I_j\cap I$ is a vertex. In particular $|I_j\cap I_{j+1}|\neq 1$, that is a contradiction. If $\cB_2$ contains two cells then the only possibility to have $|I_j\cap I_{j+1}|=1$ is $v_{j+1}=c$. Moreover, in such a case, $v_j$ is the lower left corner of $A_1$; so $v_j$ and $v_{j+1}$ do not belong to the same edge interval, that is a contradiction to the definition of a zig-zag walk. 
		If there exists $j\in\{1,\dots,\ell\}$ such that $\cP(I_j)$ contains the cells $B_2$ of $\cB_2$ and $B_3$ of $\cB_3$, that have in common the edge $\{c,d\}$, similar arguments lead to a contradiction. 
	\end{proof}
	
	\begin{rmk}\rm 
			We note it is possible to build closed paths, which contain no ladders of $n \geq 2$ steps and no zig-zag walks; see Figure \ref*{rmk ladder configuration}.
			\begin{figure}[h]
			\includegraphics[scale=1]{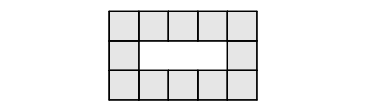}
				\caption{A closed path without any ladder.}
				\label{rmk ladder configuration}
			\end{figure}
	\end{rmk}

\section{Toric representation of closed paths with L-configurations}\label{Section L-conf toric}

	Let $\cP$ be a closed path with an L-configuration, consisting of the sequence of cells $A_1, A_2, A_3, A_4, A_5$. We denote by $a,b$ the diagonal corners of $A_3$ and by $c,d$ the anti-diagonal ones. We may suppose that $A_2\cap A_3=\{b,d\}$ and $A_3\cap A_4=\{c,b\}$, otherwise we can consider opportune reflections or rotations in order to have such an L-configuration. We also set $A_3=A$ (see Figure \ref{L conf toric}).
	\begin{figure}[h]
		\centering
\begin{tikzpicture}[line cap=round,line join=round,>=triangle 45,x=1.0cm,y=1.0cm]
\clip(-3.5,-2.5) rectangle (2.,2.);
\fill[line width=0.8pt,fill=black,fill opacity=0.10000000149011612] (-2.,-2.) -- (-1.,-2.) -- (-1.,-1.) -- (-2.,-1.) -- cycle;
\fill[line width=0.8pt,fill=black,fill opacity=0.10000000149011612] (-2.,-1.) -- (-1.,-1.) -- (-1.,0.) -- (-2.,0.) -- cycle;
\fill[line width=0.8pt,fill=black,fill opacity=0.10000000149011612] (-2.,0.) -- (-1.,0.) -- (-1.,1.) -- (-2.,1.) -- cycle;
\fill[line width=0.8pt,fill=black,fill opacity=0.10000000149011612] (-1.,-2.) -- (0.,-2.) -- (0.,-1.) -- (-1.,-1.) -- cycle;
\fill[line width=0.8pt,fill=black,fill opacity=0.10000000149011612] (0.,-2.) -- (1.,-2.) -- (1.,-1.) -- (0.,-1.) -- cycle;
\draw [line width=0.8pt] (-2.,-2.)-- (-1.,-2.);
\draw [line width=0.8pt] (-1.,-2.)-- (-1.,-1.);
\draw [line width=0.8pt] (-1.,-1.)-- (-2.,-1.);
\draw [line width=0.8pt] (-2.,-1.)-- (-2.,-2.);
\draw [line width=0.8pt] (-2.,-1.)-- (-1.,-1.);
\draw [line width=0.8pt] (-1.,-1.)-- (-1.,0.);
\draw [line width=0.8pt] (-1.,0.)-- (-2.,0.);
\draw [line width=0.8pt] (-2.,0.)-- (-2.,-1.);
\draw [line width=0.8pt] (-2.,0.)-- (-1.,0.);
\draw [line width=0.8pt] (-1.,0.)-- (-1.,1.);
\draw [line width=0.8pt] (-1.,1.)-- (-2.,1.);
\draw [line width=0.8pt] (-2.,1.)-- (-2.,0.);
\draw [line width=0.8pt] (-1.,-2.)-- (0.,-2.);
\draw [line width=0.8pt] (0.,-2.)-- (0.,-1.);
\draw [line width=0.8pt] (0.,-1.)-- (-1.,-1.);
\draw [line width=0.8pt] (-1.,-1.)-- (-1.,-2.);
\draw [line width=0.8pt] (0.,-2.)-- (1.,-2.);
\draw [line width=0.8pt] (1.,-2.)-- (1.,-1.);
\draw [line width=0.8pt] (1.,-1.)-- (0.,-1.);
\draw [line width=0.8pt] (0.,-1.)-- (0.,-2.);
\draw (0.16,-1.22) node[anchor=north west] {$A_1$};
\draw (-0.86,-1.2) node[anchor=north west] {$A_2$};
\draw (-3.54,-1.22) node[anchor=north west] {$A_3=A$};
\draw (-1.84,-0.24) node[anchor=north west] {$A_4$};
\draw (-1.84,0.72) node[anchor=north west] {$A_5$};
\draw (-2.46,-2.02) node[anchor=north west] {$a$};
\draw (-1.,-0.54) node[anchor=north west] {$b$};
\draw (-2.46,-0.7) node[anchor=north west] {$c$};
\draw (-1.06,-1.96) node[anchor=north west] {$d$};
\begin{scriptsize}
\draw [fill=black] (-2.4023839506451963,0.4173816473832763) circle (0.5pt);
\draw [fill=black] (-2.708480673626847,0.4173816473832763) circle (0.5pt);
\draw [fill=black] (-2.986750421791984,0.4173816473832763) circle (0.5pt);
\draw [fill=black] (1.312517187359383,-1.502679614956171) circle (0.5pt);
\draw [fill=black] (1.5629599607080065,-1.502679614956171) circle (0.5pt);
\draw [fill=black] (1.8134027340566297,-1.502679614956171) circle (0.5pt);
\draw [fill=black] (-1.5255743585989172,1.191801200177291) circle (0.5pt);
\draw [fill=black] (-1.5255743585989172,1.4521382850774889) circle (0.5pt);
\draw [fill=black] (-1.5255743585989172,1.7124753699776865) circle (0.5pt);
\end{scriptsize}
\end{tikzpicture}
		\caption{}
		\label{L conf toric}
	\end{figure}
	
	\noindent Let $\{V_i\}_{i\in I}$ be the sets of the maximal vertical edge intervals of $\cP$ and $\{H_j\}_{j\in J}$ be the set of the maximal horizontal edge intervals of $\cP$. Let $\{v_i\}_{i\in I}$ and $\{h_j\}_{j\in J}$ be the set of the variables associated respectively to $\{V_i\}_{i\in I}$ and $\{H_j\}_{j\in J}$. Let $w$ be another variable different from $v_i$ and $h_j$, $i\in I$ and $j\in J$.  
	  We define the following map:
\begin{align*}
	\alpha: V(\cP)&\longrightarrow K[\{v_i,h_j,w\}:i\in I,j\in J]\\
	r&\longmapsto  v_ih_jw^k
	\end{align*}
with $r\in V_i\cap H_j$, $k=0$ if $r\notin V(A)$, and $k=1$, if $r\in V(A)$. \\
The toric ring, denoted by $T_{\cP}$, is $K[\alpha(v):v\in V(\cP)]$. We denote by $S$ the polynomial ring $K[x_r:r\in V(\cP)]$ and we consider the following surjective ring homomorphism
	\begin{align*}
	\phi: S &\longrightarrow T_{\cP}\\
	\phi(x_r&)=\alpha(r)
	\end{align*}
The toric ideal $J_{\cP}$ is the kernel of $\phi$. 

\begin{prop}\label{closed path L then I in J}
	Let $\cP$ be a closed path with an L-configuration. Then $I_{\cP}\subseteq J_{\cP}$.
\end{prop}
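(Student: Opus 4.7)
The plan is to show that $\phi(f)=0$ for every generator $f = x_p x_q - x_r x_s$ of $I_\cP$, where $p,q$ are the diagonal and $r,s$ the anti-diagonal corners of an inner interval $[p,q]$ of $\cP$. Writing $p=(i,j)$, $q=(k,l)$, $r=(i,l)$, $s=(k,j)$, the entire left column from $p$ to $r$ and the entire right column from $s$ to $q$ consist of edges in $E(\cP)$, so $p$ and $r$ belong to one common maximal vertical edge interval and $q,s$ to another; analogously $p,s$ share a maximal horizontal edge interval and so do $q,r$. Hence $\alpha(p)\alpha(q)$ and $\alpha(r)\alpha(s)$ have identical $v$- and $h$-parts, and if $\epsilon(v)=1$ for $v\in V(A)$ and $\epsilon(v)=0$ otherwise, the claim reduces to the combinatorial identity $|V(A)\cap\{p,q\}|=|V(A)\cap\{r,s\}|$.

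Next I would use Definition \ref{def closed path}(4) to rule out two pathologies. First, no four cells of $\cP$ meet at one vertex: otherwise two of them would sit at path-distance larger than $2$ and still share a vertex. Hence $\cP$ has no $2\times 2$ block, and every inner interval is a $1\times n$ or $n\times 1$ sub-block of a maximal block of $\cP$. Second, the only cells of $\cP$ sharing a vertex with $A$ are its two edge-neighbors $A_2$ and $A_4$: condition (4) excludes cells at path-distance greater than $2$, while the L-configuration places $A_1$ and $A_5$ two units away from $A$ along straight lines, so $V(A_1)\cap V(A) = V(A_5)\cap V(A) = \emptyset$. In particular $A$ has no cell of $\cP$ to its left or below it, so any horizontal (resp.\ vertical) block containing $A$ has $A$ as its leftmost (resp.\ bottommost) cell.

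The last step is a case analysis on the inner interval $I$. If $I=\{A\}$ all four corners of $I$ lie in $V(A)$ and both sums equal $2$. If $I$ is a horizontal (resp.\ vertical) block of length $\geq 2$ containing $A$, the two corners of $I$ on the side of $A$ are $a,c$ (resp.\ $a,d$)---one diagonal and one anti-diagonal---while the other two corners lie at a column (resp.\ row) outside those of $A$ and therefore outside $V(A)$, so both sums equal $1$. If $I=\{A_2\}$ (resp.\ $\{A_4\}$) the shared vertices $V(A_2)\cap V(A)=\{b,d\}$ (resp.\ $V(A_4)\cap V(A)=\{b,c\}$) form a diagonal/anti-diagonal pair of corners of $I$, so again both sums equal $1$. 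A horizontal block of length $\geq 2$ containing $A_2$ but not $A$ must start at $A_2$ and continue rightward through $A_1$, so its left corners are $b,d$; the vertical analogue for $A_4$ is symmetric. In every remaining case no corner cell of $I$ shares a vertex with $A$, so both sums vanish.

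The main difficulty I anticipate is this elimination of stray configurations---a cell of $\cP$ diagonally adjacent to $A$ but distinct from $A_2,A_4$; a horizontal block stretching past $A$ to the left; a genuine $2\times 2$ inner interval; or a block containing both $A_2$ and $A_4$---each of which would violate the combinatorial identity above and each of which is prevented by combining Definition \ref{def closed path}(4) with the rigid geometry of the L-configuration. That rigidity is precisely what makes the $w$-exponent assignment in the definition of $\alpha$ globally consistent.
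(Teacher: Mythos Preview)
Your argument is correct and follows the same approach as the paper: both proofs verify $\phi(f)=0$ for each inner $2$-minor by observing that the $v_i,h_j$ factors automatically match and then checking that the $w$-exponents balance, i.e.\ that $|V(A)\cap\{p,q\}|=|V(A)\cap\{r,s\}|$. The paper's proof is terser---it splits into the three cases $[p,q]\cap V(A)=\emptyset$, $[p,q]=A$, and ``otherwise'' with the remaining sub-cases left to ``similar arguments''---whereas you make explicit the two structural facts (no $2\times2$ block; only $A_2,A_4$ touch $A$) that justify why exactly one diagonal and one anti-diagonal corner land in $V(A)$ in each nontrivial case, which is a welcome clarification rather than a different method.
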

\begin{proof}
	Let $f$ be a binomial that is a generator of $I_{\cP}$. Then there exists an inner interval $[p,q]$ of $\cP$, such that $f=x_px_q-x_rx_s$, where $r,s$ are the anti-diagonal corners of $[p,q]$. We prove that $f\in J_{\cP}$. Since $[p,q]$ is an inner interval, the vertices $p$, $r$ and $q$, $s$ are respectively on the same maximal vertical edge intervals and, similarly, the vertices $p$, $s$ and $q$, $r$ are respectively on the same maximal horizontal edge intervals. If $[p,q]\cap A = \emptyset$, then it is clear that $f\in J_{\cP}$. If $[p,q]=A$, then $p,q$ are the diagonal corners of $A$ and $r,s$ are the anti-diagonal ones, so $f\in J_{\cP}$. If $[p,q]\cap A\neq \emptyset$ and $[p,q]\neq A$, then a corner of $[p,q]$ belongs to $A$ and another one is not in $A$. We may assume that $p\in A$, in particular that $p=a$. Then $q\notin V(A)$, otherwise $[p,q]=A.$ Since $r$ and $s$ are the anti-diagonal corners of $[p,q]$, then $r=c$ and $s\notin A$. It follows that $f\in J_{\cP}$. Similar arguments hold in the other cases. 
\end{proof}

 \noindent By Proposition \ref{closed path L then I in J} and the definition of $\phi:S\rightarrow T_\cP$, we can use Lemma~\ref{Lemma shikama1} in the next Theorem, considering $J=J_\cP$.

\begin{thm}\label{Path with L conf is toric}\label{proof1}
	Let $\cP$ be a closed path with an L-configuration. Then $I_{\cP}= J_{\cP}$.	
\end{thm}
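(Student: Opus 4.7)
By Proposition~\ref{closed path L then I in J} the inclusion $I_{\cP}\subseteq J_{\cP}$ is free, so the whole proof reduces to establishing $J_{\cP}\subseteq I_{\cP}$. The plan is to proceed by induction on the degree $d$ of an irredundant binomial $f=f^{+}-f^{-}\in J_{\cP}$. Since each vertex of $\cP$ is uniquely determined by the pair (maximal vertical edge interval, maximal horizontal edge interval) through it, the map $\alpha$ is injective on vertices and hence any degree-$1$ element of $J_{\cP}$ is zero; so the induction actually starts at $d=2$.

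For the base case $d=2$, write $f=x_{p}x_{q}-x_{r}x_{s}$ with $\{p,q\}\cap\{r,s\}=\emptyset$. The equality $\alpha(x_{p}x_{q})=\alpha(x_{r}x_{s})$ forces three separate balances: in the $v_i$, $h_j$ and $w$ exponents. After possibly swapping $r,s$, the vertical balance puts $p,r$ on a common maximal vertical edge interval and $q,s$ on another, while the horizontal balance does the symmetric job with $p,s$ and with $q,r$. Consequently the four vertices are the corners of a proper rectangle $[p,q]$ whose anti-diagonal corners are $r,s$. Since a closed path with $n>5$ contains no $2\times2$ sub-square of cells (condition (4) of Definition~\ref{def closed path}), every inner interval of $\cP$ is contained in a single maximal block; the task is therefore to show that the four matched vertices in fact sit in one such block. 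The essential mechanism is the $w$-decoration at $V(A)$: without $w$, the spurious degree-$2$ relations arising from wrap-around loops through the hole (the analogue of the "outer corners of a rectangular annulus" relation) would be in $J_{\cP}$. The $w$-balance forbids any such wrap-around, which reduces the configurations of $p,q,r,s$ exactly to those inside a single block, making $[p,q]$ an inner interval and $f$ a generator of $I_{\cP}$.

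For the inductive step $d\geq 3$, the strategy is to invoke Lemma~\ref{Lemma shikama1}: if three vertices $p,q\in V^{+}_{f}$ and $r\in V^{-}_{f}$ (or symmetrically with signs swapped) can be found such that $p,q$ are diagonal corners of some inner interval of $\cP$ and $r$ is one of the anti-diagonal corners, then $f$ is redundant in $J_{\cP}$ and the induction closes. To produce such a triple, I would fix any $x_{p}$ dividing $f^{+}$; the $v$-balance of $\phi(f^{+})=\phi(f^{-})$ supplies some $r\in V^{-}_{f}$ lying on the same maximal vertical edge interval as $p$, and the $h$-balance supplies a further vertex of $V^{+}_{f}$ on the horizontal edge interval through $r$. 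Using the fact that in a closed path two vertices sharing a maximal vertical (resp.\ horizontal) edge interval belong to a common maximal vertical (resp.\ horizontal) block, one shows the resulting four vertices lie inside a single block, so that the required inner interval exists.

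The main obstacle, both in the base case and in extracting the triple for the inductive step, is the geometric claim that the edge-interval matching forced by $\phi$ actually locates the vertices inside a common maximal block of $\cP$ rather than in distant blocks connected only by running around the hole. This is precisely what the $w$-factor attached to $V(A)$ is designed to rule out, and the subtle part of the argument is a careful case analysis around the L-configuration $A_1,\dots,A_5$ showing that once the $w$-exponents on both sides of $f$ balance, the matched vertices cannot be separated by the perimeter of the hole.
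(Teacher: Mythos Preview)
Your outline has the right architecture --- Proposition~\ref{closed path L then I in J} for one inclusion, then a degree-$2$ base case plus Lemma~\ref{Lemma shikama1} for higher degrees --- and this is exactly the paper's strategy. But there is a genuine gap in how you intend to close both halves.

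The key missing ingredient is the reduction to a \emph{simple} sub-polyomino. In your base case you write that ``the $w$-balance forbids any such wrap-around'', but this is only meaningful when at least one of $p,q,r,s$ lies in $V(A)$: if none do, the $w$-exponent on both sides of $\phi(x_px_q)=\phi(x_rx_s)$ is zero and the $w$-decoration says nothing at all. The paper handles this case (which is the hardest one) by a different device: deleting the cells touching $A$ yields a \emph{simple} polyomino $\cP'$ (by Proposition~\ref{P is a not closed path}(3)), and for simple polyominoes one already knows $J_{\cP'}=I_{\cP'}$ from \cite{Simple are prime}. Since $f\in J_{\cP'}$ when its vertices avoid $V(A)$, this immediately gives $f\in I_{\cP'}\subset I_{\cP}$. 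The same trick is used at the start of the irredundancy argument for $\deg f\geq 3$: one first shows that $f$ \emph{must} involve a vertex of $A$, again by the simple-polyomino reduction, before any balancing argument can get off the ground. Your plan of ``fix any $x_p$'' skips this entirely.

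A second, smaller gap: when $v_1\in V_f^+\cap V(A)$ and $v_1'\in V_f^-\cap V(A)$ are diagonal (or anti-diagonal) corners of $A$, Lemma~\ref{Lemma shikama1} does not apply directly. The paper first rewrites $f$ using one inner $2$-minor to move a vertex, and only then finds the triple needed for Lemma~\ref{Lemma shikama1}. Your sketch (``one shows the resulting four vertices lie inside a single block'') does not anticipate this two-step manoeuvre, and without it the diagonal-corner case does not close.
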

\begin{proof} By Proposition \ref{closed path L then I in J} we have $I_{\cP}\subseteq J_{\cP}$. We prove that $J_{\cP}\subseteq I_{\cP}$, showing the following two facts:
	\begin{enumerate}
		\item every binomial of degree two in $J_{\cP}$ belongs to $I_{\cP}$;
		\item every irredundant binomial in $J_{\cP}$ is of degree two.
	\end{enumerate}
We prove (1). Let $f=x_px_q-x_rx_s$ be a binomial in $J_{\cP}$. Without loss of generality we can assume that $p,q$ are the diagonal corners of the interval $[p,q]$. We denote by $v_p,h_p$ and $v_q,h_q$ the variables associated to the maximal horizontal and vertical edge intervals, which contain respectively $p$ and $q$. Consider that $\phi(x_px_q)=w^kv_ph_pv_qh_q=\phi(x_rx_s)$ with $k\in \{0,1,2\}$. The only possibility is that $r,s$ are the anti-diagonal corners of $[p,q]$ and that $[p, r]$, $[p, s]$, $[s, q]$ and $[r, q]$ are edge intervals of $\cP$. By contradiction, we assume that $[p, q]$ is not an inner interval of $\cP$, in particular there exists a
set of cells of $[p, q]$ that do not belong to $\cP$. Since $[p, r]$, $[p, s]$, $[s, q]$ and $[r, q]$
are edge intervals in $\cP$, then $[p,q]$ contains the hole $\mathcal{H}$ of $\cP$. In this case, the only possible arrangement of the cells of $\cP$ consists in having at least one of the corners $p,q,r$ and $s$ in $A$. We may assume that $p\in A$. Then $w$ divides $\phi(x_p)\phi(x_q)$ and so $w$ divides $\phi(x_r)$ or $\phi(x_s)$. From $w|\phi(x_r)$ (resp. $w|\phi(x_s)$) it follows that $r\in A$ (resp. $s\in A$). Since $\cH\subseteq [p,q]$, we have either $r$ or $s$ does not belong to $A$, so it is a contradiction. Hence $[p, q]$ is an inner interval of $\cP$.\\
\noindent We prove (2). We suppose that there exists a binomial $f$ in $J_{\cP}$ with $\deg f\geq3$, such that $f$ is irredundant. 
We suppose that every variable of $f$ is in $\{x_a:a\in V(\cP) \backslash V(A)\}$. We denote by $\cP'$ the simple polyomino obtained by removing the cells having vertices in common with $A$. We define the map $\phi'$ as the restriction of $\phi$ on $K[x_a:a\in V(\cP)\backslash V(A)]$ and we denote by $J_{\cP'}$ the kernel of $\phi'$. By Theorem 2.2 in \cite{Simple are prime}, we have that $J_{\cP'}=I_{\cP'}$, where $I_{\cP'}$ is the polyomino ideal associated to $\cP'$. We observe that $f$ is a binomial in $J_{\cP'}$. Since $J_{\cP'}\subset J_{\cP}$ and $f$ is irredundant in $J_{\cP}$, then $f$ is irredundant in $J_{\cP'}$. Then $f$ is irredundant in $I_{\cP'}$, that is a contradiction. It follows that there exists at least one variable in $f$, that corresponds to a vertex of $A$. We recall that $f=f^+-f^-\in J_{\cP}$, so $\phi(f^+)=\phi(f^-)$. We may suppose that there exists $v_1\in A$, such that $x_{v_1}$ divides $f^+$, that is $v_1\in V^+_f$. Then $w$ divides $\phi(f^+)=\phi(f^-)$, so there exists $v_1'\in A$, such that $x_{v_1'}$ divides $f^-$, that is $v_1'\in V^-_f$. If $v_1=v_1'$, then $f=x_{v_1}(\tilde{f}^+-\tilde{f}^-)$, where $\tilde{f}^+-\tilde{f}^-\in J_{\cP}$ and $\deg(\tilde{f}^+-\tilde{f}^-)<\deg f$, a contradiction. Then $v_1\neq v_1'$. Let $V_{v_1}$ and $H_{v_1}$ be the maximal vertical and horizontal edge intervals of $\cP$, which contain $v_1$. Then $v_{v_1}$ divides $\phi(f^+)=\phi(f^-)$, so there exists $v_2'\in V_{v_1}$ such that $x_{v_2'}$ divides $f^-$. Moreover $h_{v_1}$ divides $\phi(f^+)=\phi(f^-)$, so there exists $v_3'\in H_{v_1}$ such that $x_{v_3'}$ divides $f^-$. Let $V_{v_1'}$ and $H_{v_1'}$ be the maximal vertical and horizontal edge intervals of $\cP$, which contain $v_1'$. Then $v_{v_1'}$ divides $\phi(f^-)=\phi(f^+)$, so there exists $v_2\in V_{v_1'}$ such that $x_{v_2}$ divides $f^+$. Moreover $h_{v_1'}$ divides $\phi(f^-)=\phi(f^+)$, so there exists $v_3\in H_{v_1'}$ such that $x_{v_3}$ divides $f^+$. The following cases could occur:
\begin{enumerate}[(I)]
	\item $v_1$ and $v_1'$ are on the same vertical edge interval of $\cP$.
	\begin{figure}[h]
		\centering
	\begin{tikzpicture}[line cap=round,line join=round,>=triangle 45,x=1.0cm,y=1.0cm]
	\clip(-3.,-2.05) rectangle (1.,1.1);
	\fill[line width=0.8pt,fill=black,fill opacity=0.10000000149011612] (-2.,-1.) -- (-1.5,-1.) -- (-1.5,-0.5) -- (-2.,-0.5) -- cycle;
	\fill[line width=0.8pt,fill=black,fill opacity=0.25] (-2.,-1.5) -- (-1.5,-1.5) -- (-1.5,-1.) -- (-2.,-1.) -- cycle;
	\fill[line width=0.8pt,fill=black,fill opacity=0.10000000149011612] (-2.,-0.5) -- (-1.5,-0.5) -- (-1.5,0.) -- (-2.,0.) -- cycle;
	\fill[line width=0.8pt,fill=black,fill opacity=0.25] (-1.5,-1.5) -- (-1.,-1.5) -- (-1.,-1.) -- (-1.5,-1.) -- cycle;
	\fill[line width=0.8pt,fill=black,fill opacity=0.25] (-1.,-1.) -- (-1.,-1.5) -- (-0.5,-1.5) -- (-0.5,-1.) -- cycle;
	\fill[line width=0.8pt,fill=black,fill opacity=0.10000000149011612] (-1.,-1.) -- (-0.5,-1.) -- (-0.5,-0.5) -- (-1.,-0.5) -- cycle;
	\fill[line width=0.8pt,fill=black,fill opacity=0.10000000149011612] (-2.,0.) -- (-2.5,0.) -- (-2.5,-0.5) -- (-2.,-0.5) -- cycle;
	\fill[line width=0.8pt,fill=black,fill opacity=0.10000000149011612] (-2.5,0.) -- (-2.,0.) -- (-2.,0.5) -- (-2.5,0.5) -- cycle;
	\fill[line width=0.8pt,fill=black,fill opacity=0.10000000149011612] (-0.5,-1.) -- (0.,-1.) -- (0.,-0.5) -- (-0.5,-0.5) -- cycle;
	\draw [line width=0.8pt] (-2.,-1.)-- (-1.5,-1.);
	\draw [line width=0.8pt] (-1.5,-1.)-- (-1.5,-0.5);
	\draw [line width=0.8pt] (-1.5,-0.5)-- (-2.,-0.5);
	\draw [line width=0.8pt] (-2.,-0.5)-- (-2.,-1.);
	\draw [line width=0.8pt] (-2.,-1.5)-- (-1.5,-1.5);
	\draw [line width=0.8pt] (-1.5,-1.5)-- (-1.5,-1.);
	\draw [line width=0.8pt] (-1.5,-1.)-- (-2.,-1.);
	\draw [line width=0.8pt] (-2.,-1.)-- (-2.,-1.5);
	\draw [line width=0.8pt] (-2.,-0.5)-- (-1.5,-0.5);
	\draw [line width=0.8pt] (-1.5,-0.5)-- (-1.5,0.);
	\draw [line width=0.8pt] (-1.5,0.)-- (-2.,0.);
	\draw [line width=0.8pt] (-2.,0.)-- (-2.,-0.5);
	\draw [line width=0.8pt] (-1.5,-1.5)-- (-1.,-1.5);
	\draw [line width=0.8pt] (-1.,-1.5)-- (-1.,-1.);
	\draw [line width=0.8pt] (-1.,-1.)-- (-1.5,-1.);
	\draw [line width=0.8pt] (-1.5,-1.)-- (-1.5,-1.5);
	\draw [line width=0.8pt] (-1.,-1.)-- (-1.,-1.5);
	\draw [line width=0.8pt] (-1.,-1.5)-- (-0.5,-1.5);
	\draw [line width=0.8pt] (-0.5,-1.5)-- (-0.5,-1.);
	\draw [line width=0.8pt] (-0.5,-1.)-- (-1.,-1.);
	\draw [line width=0.8pt] (-1.,-1.)-- (-0.5,-1.);
	\draw [line width=0.8pt] (-0.5,-1.)-- (-0.5,-0.5);
	\draw [line width=0.8pt] (-0.5,-0.5)-- (-1.,-0.5);
	\draw [line width=0.8pt] (-1.,-0.5)-- (-1.,-1.);
	\draw [line width=0.8pt] (-2.,0.)-- (-2.5,0.);
	\draw [line width=0.8pt] (-2.5,0.)-- (-2.5,-0.5);
	\draw [line width=0.8pt] (-2.5,-0.5)-- (-2.,-0.5);
	\draw [line width=0.8pt] (-2.,-0.5)-- (-2.,0.);
	\draw [line width=0.8pt] (-2.5,0.)-- (-2.,0.);
	\draw [line width=0.8pt] (-2.,0.)-- (-2.,0.5);
	\draw [line width=0.8pt] (-2.,0.5)-- (-2.5,0.5);
	\draw [line width=0.8pt] (-2.5,0.5)-- (-2.5,0.);
	\draw [line width=0.8pt] (-0.5,-1.)-- (0.,-1.);
	\draw [line width=0.8pt] (0.,-1.)-- (0.,-0.5);
	\draw [line width=0.8pt] (0.,-0.5)-- (-0.5,-0.5);
	\draw [line width=0.8pt] (-0.5,-0.5)-- (-0.5,-1.);
	\draw (-2.06,-0.96) node[anchor=north west] {$A$};
	\draw (-2.52,-1.46) node[anchor=north west] {$v_1$};
	\draw (-2.52,-0.72) node[anchor=north west] {$v_1'$};
	\draw (-0.64,-1.38) node[anchor=north west] {$v_3'$};
	\begin{scriptsize}
	\draw [fill=black] (-2.2520289416202446,0.6460024357852483) circle (0.5pt);
	\draw [fill=black] (-2.2520289416202446,0.781379131820666) circle (0.5pt);
	\draw [fill=black] (-2.2520289416202446,0.9167558278560837) circle (0.5pt);
	\draw [fill=black] (0.13060090860310616,-0.7619152029830959) circle (0.5pt);
	\draw [fill=black] (0.2862841090438365,-0.7619152029830956) circle (0.5pt);
	\draw [fill=black] (0.4419673094845668,-0.7619152029830956) circle (0.5pt);
	\end{scriptsize}
	\end{tikzpicture}
		\caption{}
		\label{v1 e v1' are on vertical}
	\end{figure} 
For the structure of $\cP$, either $v_3$ or $v_3'$ is a vertex which identifies an inner interval of $\cP$ along with $v_1$ and $v_1'$ (see Figure \ref*{v1 e v1' are on vertical}). From Lemma \ref*{Lemma shikama1} a contradiction follows.
	\item 
	$v_1$ and $v_1'$ are on the same horizontal interval of $\cP$.
	\begin{figure}[h]
	\centering
\begin{tikzpicture}[line cap=round,line join=round,>=triangle 45,x=1.0cm,y=1.0cm]
\clip(-3.,-2.05) rectangle (1.,1.1);
\fill[line width=0.8pt,fill=black,fill opacity=0.25] (-2.,-1.) -- (-1.5,-1.) -- (-1.5,-0.5) -- (-2.,-0.5) -- cycle;
\fill[line width=0.8pt,fill=black,fill opacity=0.25] (-2.,-1.5) -- (-1.5,-1.5) -- (-1.5,-1.) -- (-2.,-1.) -- cycle;
\fill[line width=0.8pt,fill=black,fill opacity=0.25] (-2.,-0.5) -- (-1.5,-0.5) -- (-1.5,0.) -- (-2.,0.) -- cycle;
\fill[line width=0.8pt,fill=black,fill opacity=0.10000000149011612] (-1.5,-1.5) -- (-1.,-1.5) -- (-1.,-1.) -- (-1.5,-1.) -- cycle;
\fill[line width=0.8pt,fill=black,fill opacity=0.10000000149011612] (-1.,-1.) -- (-1.,-1.5) -- (-0.5,-1.5) -- (-0.5,-1.) -- cycle;
\fill[line width=0.8pt,fill=black,fill opacity=0.10000000149011612] (-1.,-1.) -- (-0.5,-1.) -- (-0.5,-0.5) -- (-1.,-0.5) -- cycle;
\fill[line width=0.8pt,fill=black,fill opacity=0.10000000149011612] (-2.,0.) -- (-2.5,0.) -- (-2.5,-0.5) -- (-2.,-0.5) -- cycle;
\fill[line width=0.8pt,fill=black,fill opacity=0.10000000149011612] (-2.5,0.) -- (-2.,0.) -- (-2.,0.5) -- (-2.5,0.5) -- cycle;
\fill[line width=0.8pt,fill=black,fill opacity=0.10000000149011612] (-0.5,-1.) -- (0.,-1.) -- (0.,-0.5) -- (-0.5,-0.5) -- cycle;
\draw [line width=0.8pt] (-2.,-1.)-- (-1.5,-1.);
\draw [line width=0.8pt] (-1.5,-1.)-- (-1.5,-0.5);
\draw [line width=0.8pt] (-1.5,-0.5)-- (-2.,-0.5);
\draw [line width=0.8pt] (-2.,-0.5)-- (-2.,-1.);
\draw [line width=0.8pt] (-2.,-1.5)-- (-1.5,-1.5);
\draw [line width=0.8pt] (-1.5,-1.5)-- (-1.5,-1.);
\draw [line width=0.8pt] (-1.5,-1.)-- (-2.,-1.);
\draw [line width=0.8pt] (-2.,-1.)-- (-2.,-1.5);
\draw [line width=0.8pt] (-2.,-0.5)-- (-1.5,-0.5);
\draw [line width=0.8pt] (-1.5,-0.5)-- (-1.5,0.);
\draw [line width=0.8pt] (-1.5,0.)-- (-2.,0.);
\draw [line width=0.8pt] (-2.,0.)-- (-2.,-0.5);
\draw [line width=0.8pt] (-1.5,-1.5)-- (-1.,-1.5);
\draw [line width=0.8pt] (-1.,-1.5)-- (-1.,-1.);
\draw [line width=0.8pt] (-1.,-1.)-- (-1.5,-1.);
\draw [line width=0.8pt] (-1.5,-1.)-- (-1.5,-1.5);
\draw [line width=0.8pt] (-1.,-1.)-- (-1.,-1.5);
\draw [line width=0.8pt] (-1.,-1.5)-- (-0.5,-1.5);
\draw [line width=0.8pt] (-0.5,-1.5)-- (-0.5,-1.);
\draw [line width=0.8pt] (-0.5,-1.)-- (-1.,-1.);
\draw [line width=0.8pt] (-1.,-1.)-- (-0.5,-1.);
\draw [line width=0.8pt] (-0.5,-1.)-- (-0.5,-0.5);
\draw [line width=0.8pt] (-0.5,-0.5)-- (-1.,-0.5);
\draw [line width=0.8pt] (-1.,-0.5)-- (-1.,-1.);
\draw [line width=0.8pt] (-2.,0.)-- (-2.5,0.);
\draw [line width=0.8pt] (-2.5,0.)-- (-2.5,-0.5);
\draw [line width=0.8pt] (-2.5,-0.5)-- (-2.,-0.5);
\draw [line width=0.8pt] (-2.,-0.5)-- (-2.,0.);
\draw [line width=0.8pt] (-2.5,0.)-- (-2.,0.);
\draw [line width=0.8pt] (-2.,0.)-- (-2.,0.5);
\draw [line width=0.8pt] (-2.,0.5)-- (-2.5,0.5);
\draw [line width=0.8pt] (-2.5,0.5)-- (-2.5,0.);
\draw [line width=0.8pt] (-0.5,-1.)-- (0.,-1.);
\draw [line width=0.8pt] (0.,-1.)-- (0.,-0.5);
\draw [line width=0.8pt] (0.,-0.5)-- (-0.5,-0.5);
\draw [line width=0.8pt] (-0.5,-0.5)-- (-0.5,-1.);
\draw (-2.,-0.96) node[anchor=north west] {$A$};
\draw (-2.46,-1.64) node[anchor=north west] {$v_1$};
\draw (-1.7,-1.52) node[anchor=north west] {$v_1'$};
\draw (-1.72,0.46) node[anchor=north west] {$v_2$};
\begin{scriptsize}
\draw [fill=black] (-2.2520289416202446,0.6460024357852483) circle (0.5pt);
\draw [fill=black] (-2.2520289416202446,0.781379131820666) circle (0.5pt);
\draw [fill=black] (-2.2520289416202446,0.9167558278560837) circle (0.5pt);
\draw [fill=black] (0.13060090860310616,-0.7619152029830959) circle (0.5pt);
\draw [fill=black] (0.2862841090438365,-0.7619152029830956) circle (0.5pt);
\draw [fill=black] (0.4419673094845668,-0.7619152029830956) circle (0.5pt);
\end{scriptsize}
\end{tikzpicture}
	\caption{}
	\label{v1 and v1' are on horizontal}	
		\end{figure}
	For the structure of $\cP$, either $v_2$ or $v_2'$ is a vertex which identifies an inner interval of $\cP$ along with $v_1$ and $v_1'$ (see Figure \ref*{v1 and v1' are on horizontal}). As before, by Lemma \ref*{Lemma shikama1}, we have a contradiction.
	\item $v_1$ and $v_1'$ are the diagonal corners of $A$.  We may suppose that $v_1=a$ and $v_1'=b$. We prove that $v_3'$ cannot be an anti-diagonal corner of $A$. If $v_3'$ is an anti-diagonal corner of $A$, then $v_3'=d$. For the structure of $\cP$, either $v_2$ or $v_2'$ is a vertex which identifies an inner interval of $\cP$ respectively with $v_1$ or $v_3'$. If $[v_1,v_2]$ is an inner interval, then we have a contradiction, applying Lemma \ref*{Lemma shikama1} to $v_1, v_3',v_2$. If the interval with anti-diagonal corners $v_2',v_1'$ is an inner interval, then we have a contradiction, by Lemma \ref*{Lemma shikama1} applied to $v_2',v_3', v_1$. By similar arguments, $v_3$,  $v_2$ and $v_2'$ cannot be anti-diagonal vertices of $A$.\\ For the structure of $\cP$, either $v_3$ or $v_3'$ is a vertex which identifies an inner interval of $\cP$ respectively with $v_1$ or $v_1'.$ We assume that $[v_1,v_3]$ is an inner interval of $\cP$. We denote by $g,h$ the anti-diagonal corners of $[v_1,v_3]$. For the structure of $\cP$, either $v_2$ or $v_2'$ is such that the interval identified by $g,v_2$ or $v_1',v_2'$ is inner to $\cP$. We assume that $[g,v_2]$ is an inner interval of $\cP$ (see Figure \ref{v1 and v1' not equal}). 
	\begin{figure}[h]
		\centering
\begin{tikzpicture}[line cap=round,line join=round,>=triangle 45,x=1.0cm,y=1.0cm]
\clip(-3.5,-4.) rectangle (0.8,0.);
\fill[line width=0.8pt,fill=black,fill opacity=0.20000000298023224] (-2.5,-2.) -- (-2.,-2.) -- (-2.,-1.5) -- (-2.5,-1.5) -- cycle;
\fill[line width=0.8pt,fill=black,fill opacity=0.20000000298023224] (-2.5,-2.5) -- (-2.,-2.5) -- (-2.,-2.) -- (-2.5,-2.) -- cycle;
\fill[line width=0.8pt,fill=black,fill opacity=0.30000001192092896] (-2.5,-3.) -- (-2.,-3.) -- (-2.,-2.5) -- (-2.5,-2.5) -- cycle;
\fill[line width=0.8pt,fill=black,fill opacity=0.30000001192092896] (-2.,-2.5) -- (-2.,-3.) -- (-1.5,-3.) -- (-1.5,-2.5) -- cycle;
\fill[line width=0.8pt,fill=black,fill opacity=0.30000001192092896] (-1.5,-2.5) -- (-1.5,-3.) -- (-1.,-3.) -- (-1.,-2.5) -- cycle;
\fill[line width=0.8pt,fill=black,fill opacity=0.10000000149011612] (-3.,-2.) -- (-2.5,-2.) -- (-2.5,-1.5) -- (-3.,-1.5) -- cycle;
\fill[line width=0.8pt,fill=black,fill opacity=0.10000000149011612] (-3.,-1.5) -- (-2.5,-1.5) -- (-2.5,-1.) -- (-3.,-1.) -- cycle;
\fill[line width=0.8pt,fill=black,fill opacity=0.10000000149011612] (-1.5,-3.5) -- (-1.,-3.5) -- (-1.,-3.) -- (-1.5,-3.) -- cycle;
\fill[line width=0.8pt,fill=black,fill opacity=0.10000000149011612] (-1.,-3.) -- (-1.,-3.5) -- (-0.5,-3.5) -- (-0.5,-3.) -- cycle;
\draw [line width=0.8pt] (-2.5,-2.)-- (-2.,-2.);
\draw [line width=0.8pt] (-2.,-2.)-- (-2.,-1.5);
\draw [line width=0.8pt] (-2.,-1.5)-- (-2.5,-1.5);
\draw [line width=0.8pt] (-2.5,-1.5)-- (-2.5,-2.);
\draw [line width=0.8pt] (-2.5,-2.5)-- (-2.,-2.5);
\draw [line width=0.8pt] (-2.,-2.5)-- (-2.,-2.);
\draw [line width=0.8pt] (-2.,-2.)-- (-2.5,-2.);
\draw [line width=0.8pt] (-2.5,-2.)-- (-2.5,-2.5);
\draw [line width=0.8pt] (-2.5,-3.)-- (-2.,-3.);
\draw [line width=0.8pt] (-2.,-3.)-- (-2.,-2.5);
\draw [line width=0.8pt] (-2.,-2.5)-- (-2.5,-2.5);
\draw [line width=0.8pt] (-2.5,-2.5)-- (-2.5,-3.);
\draw [line width=0.8pt] (-2.,-2.5)-- (-2.,-3.);
\draw [line width=0.8pt] (-2.,-3.)-- (-1.5,-3.);
\draw [line width=0.8pt] (-1.5,-3.)-- (-1.5,-2.5);
\draw [line width=0.8pt] (-1.5,-2.5)-- (-2.,-2.5);
\draw [line width=0.8pt] (-1.5,-2.5)-- (-1.5,-3.);
\draw [line width=0.8pt] (-1.5,-3.)-- (-1.,-3.);
\draw [line width=0.8pt] (-1.,-3.)-- (-1.,-2.5);
\draw [line width=0.8pt] (-1.,-2.5)-- (-1.5,-2.5);
\draw [line width=0.8pt] (-3.,-2.)-- (-2.5,-2.);
\draw [line width=0.8pt] (-2.5,-2.)-- (-2.5,-1.5);
\draw [line width=0.8pt] (-2.5,-1.5)-- (-3.,-1.5);
\draw [line width=0.8pt] (-3.,-1.5)-- (-3.,-2.);
\draw [line width=0.8pt] (-3.,-1.5)-- (-2.5,-1.5);
\draw [line width=0.8pt] (-2.5,-1.5)-- (-2.5,-1.);
\draw [line width=0.8pt] (-2.5,-1.)-- (-3.,-1.);
\draw [line width=0.8pt] (-3.,-1.)-- (-3.,-1.5);
\draw [line width=0.8pt] (-1.5,-3.5)-- (-1.,-3.5);
\draw [line width=0.8pt] (-1.,-3.5)-- (-1.,-3.);
\draw [line width=0.8pt] (-1.,-3.)-- (-1.5,-3.);
\draw [line width=0.8pt] (-1.5,-3.)-- (-1.5,-3.5);
\draw [line width=0.8pt] (-1.,-3.)-- (-1.,-3.5);
\draw [line width=0.8pt] (-1.,-3.5)-- (-0.5,-3.5);
\draw [line width=0.8pt] (-0.5,-3.5)-- (-0.5,-3.);
\draw [line width=0.8pt] (-0.5,-3.)-- (-1.,-3.);
\draw (-3.08,-2.96) node[anchor=north west] {$v_1$};
\draw (-1.,-2.34) node[anchor=north west] {$v_3$};
\draw (-3.04,-2.3) node[anchor=north west] {$g$};
\draw (-0.98,-2.96) node[anchor=north west] {$h$};
\draw (-2.,-1.24) node[anchor=north west] {$v_2$};
\draw (-2.56,-2.54) node[anchor=north west] {$A$};
\draw (-2.,-1.9) node[anchor=north west] {$v_1'$};
\begin{scriptsize}
\draw [fill=black] (-2.7542109448210343,-0.7975441118452805) circle (0.5pt);
\draw [fill=black] (-2.758686826577932,-0.6050811962986826) circle (0.5pt);
\draw [fill=black] (-2.758686826577932,-0.3991906354813919) circle (0.5pt);
\draw [fill=black] (-0.24178582204544496,-3.260844104625359) circle (0.5pt);
\draw [fill=black] (0.07355733214520188,-3.260844104625359) circle (0.5pt);
\draw [fill=black] (0.35570647010525436,-3.255311768586927) circle (0.5pt);
\end{scriptsize}
\end{tikzpicture}
		\caption{}
		\label{v1 and v1' not equal}
	\end{figure} 

 \noindent Then:
 $$ f=f^+-f^-=\frac{f^+}{x_{v_1}x_{v_3}}(x_{v_1}x_{v_3}-x_gx_h)+\frac{f^+}{x_{v_1}x_{v_3}}x_gx_h-f^-.$$
 Since $[v_1,v_3]$ is an inner interval of $\cP$, then $x_{v_1}x_{v_3}-x_gx_h \in I_{\cP}\subseteq J_{\cP}$. We set $\tilde{f}=\frac{f^+}{x_{v_1}x_{v_3}}x_gx_h-f^-$, $f_1=\frac{f^+}{x_{v_1}x_{v_3}}x_gx_h$ and $f_2=f^-$, so $\tilde{f}=f_1-f_2$. We observe that $\tilde{f}\in J_{\cP}$, $x_{v_2}x_g$ divides $f_1$ and $x_{v_1'}$ divides $f_2$. Since $v_2,g\in V^+_{\tilde{f}}$ and $v_1'\in V^-_{\tilde{f}}$, from Lemma \ref{Lemma shikama1} it follows that $\tilde{f}$ is redundant in $J_{\cP}$. Then $f$ in redundant in $J_{\cP}$, that is a contradiction. By similar arguments we can have the same conclusion in the other cases.
 \item $v_1$ and $v_1'$ are anti-diagonal corners of $A$. By arguments as in the previous case, we deduce that this one is not possible.
\end{enumerate}
Then $f$ is a redundant binomial in $J_{\cP}$. In conclusion we have $J_{\cP}\subseteq I_{\cP}$, hence 
$J_{\cP} = I_{\cP}$.
\end{proof}

\begin{coro}\label{P closed path L cof then prime}
Let $\cP$ be a closed path with an L-configuration. Then $I_{\cP}$ is prime.
\end{coro}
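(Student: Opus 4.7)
The plan is to deduce primality directly from the toric identification established in Theorem~\ref{Path with L conf is toric}. That theorem gives the equality $I_\cP = J_\cP$, so it suffices to argue that $J_\cP$ is prime. By construction, $J_\cP$ is the kernel of the ring homomorphism $\phi\colon S \to T_\cP$, where $T_\cP = K[\alpha(v): v \in V(\cP)]$ sits inside the polynomial ring $K[\{v_i, h_j, w\}: i \in I, j \in J]$.

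Concretely, I would proceed in two short steps. First, observe that $T_\cP$, being a subring of a polynomial ring over the field $K$, is an integral domain. Second, recall the standard fact that the kernel of any ring homomorphism into a domain is a prime ideal; applied to $\phi$, this yields that $J_\cP$ is prime. Combining with $I_\cP = J_\cP$ from Theorem~\ref{Path with L conf is toric}, the primality of $I_\cP$ follows at once.

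There is essentially no obstacle here, since all the substantive work — verifying both inclusions $I_\cP \subseteq J_\cP$ (Proposition~\ref{closed path L then I in J}) and $J_\cP \subseteq I_\cP$ (the delicate case analysis on irredundant binomials in the proof of Theorem~\ref{Path with L conf is toric}) — has already been carried out. The corollary is simply the payoff of having exhibited the polyomino ideal as a toric kernel.
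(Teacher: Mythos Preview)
Your proposal is correct and matches the paper's intended argument: the corollary is stated without proof precisely because it follows immediately from Theorem~\ref{Path with L conf is toric} together with the fact that $J_\cP$, as the kernel of a homomorphism into the integral domain $T_\cP\subseteq K[\{v_i,h_j,w\}]$, is prime.
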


\section{Toric representation of closed paths with ladders of at least three steps} \label{Section toric ladder}
	Let $\cB=\{\cB_i\}_{i=1,\dots,m}$ be a maximal ladder of $m$ steps, $m>2$. After some convenient reflections or rotations of $\cP$, we can suppose that $\cB_1,\dots,\cB_m$ are in horizontal position and the ladder is going down. We suppose that the block $\cB_{m-1}$ is made up of $n$ cells, which we denote $A_1,\dots, A_n$ from left to right. We also denote by $a_i$ the lower left corner of $A_i$, for all $i=1,\dots,n$. Let $A$ be the cell of $\cB_m$, having an edge in common with $A_n$. We denote by $a,b$ the diagonal corners of $A$ and by $d$ the other anti-diagonal corner (see Figure \ref*{ladder toric representation}).
\begin{figure}[h]
\centering
\includegraphics[scale=0.7]{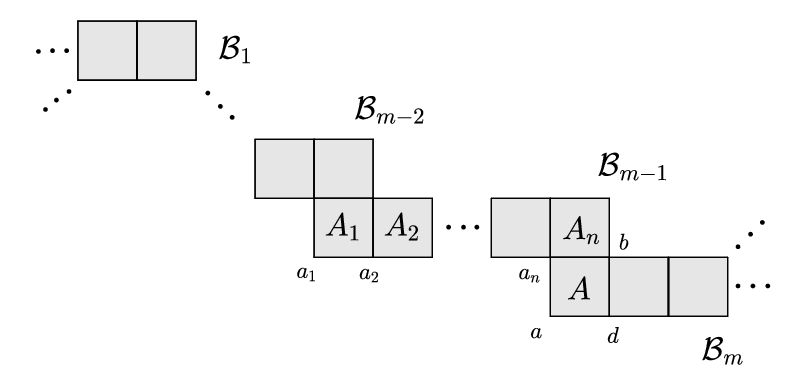}
\caption{}
\label{ladder toric representation}
\end{figure}

\noindent We also set $L_{\cB}=\{a_1,\dots,a_n,d,a,b \}$. As in the previous section, we denote by $\{V_i\}_{i\in I}$ the set of the maximal edge intervals of $\cP$ and by $\{H_j\}_{j\in J}$ the set of the maximal horizontal edge intervals of $\cP$. Let $\{v_i\}_{i\in I}$ and $\{h_j\}_{j\in J}$ be the sets of the variables associated respectively to $\{V_i\}_{i\in I}$ and $\{H_j\}_{j\in J}$. Let $\cH$ be the hole of $\cP$ and $w$ be another variable. We define the following map:

\begin{align*}
	\alpha: V(\cP)&\longrightarrow K[\{v_i,h_j,w\}:i\in I,j\in J]\\
	r&\longmapsto  v_ih_jw^k
	\end{align*}
with $V_i\cap H_j=\{r\}$ and where $k=0$, if $r\notin L_{\cB}$, and $k=1$, if $r\in L_{\cB}$. \\
We denote by $T_{\cP}$ the toric ring $K[\alpha(v):v\in V(\cP)]$ and by $J_{\cP}$ the kernel of the following surjective ring homomorphism:
\begin{align*}
\phi: S &\longrightarrow T_{\cP}\\
\phi(x_r&)=\alpha(r)
\end{align*}

\begin{prop}\label{I in J ladder conf}
	Let $\cP$ be a closed path with a ladder of $m$ steps ($m>2$). Then $I_{\cP}\subseteq J_{\cP}$.
\end{prop}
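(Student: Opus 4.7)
The plan is to adapt the strategy used in the proof of Proposition~\ref{closed path L then I in J}. I take a generator $f = x_p x_q - x_r x_s$ of $I_{\cP}$ coming from an inner interval $[p,q]$ with diagonal corners $p,q$ and anti-diagonal corners $r,s$, and aim to prove $\phi(f) = 0$. Since $[p,q]$ is an inner interval, $p$ and $s$ (resp.\ $q$ and $r$) lie on a common maximal horizontal edge interval of $\cP$, while $p,r$ (resp.\ $q,s$) lie on a common maximal vertical edge interval. Hence the $v_i$- and $h_j$-parts of $\phi(x_p x_q)$ and $\phi(x_r x_s)$ coincide and it only remains to compare the $w$-exponents. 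Writing $k_v = 1$ when $v \in L_{\cB}$ and $k_v = 0$ otherwise, the problem reduces to the combinatorial identity
\[
|\{p,q\} \cap L_{\cB}| \;=\; |\{r,s\} \cap L_{\cB}|
\]
for every inner interval $[p,q]$ of $\cP$.

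To verify this I would perform a case analysis on the position of $[p,q]$ in $\cP$, using that $L_{\cB}$ is supported on $V(\cB_{m-1}) \cup V(A)$ and that, since the anti-diagonal corner of $A$ other than $d$ coincides with $a_n$, the whole vertex set $V(A)$ is contained in $L_{\cB}$. The relevant cases are: (i) $[p,q]$ is disjoint from $V(\cB_{m-1}) \cup V(A)$, so both sides are $0$; (ii) $[p,q] \subseteq \cB_{m-1}$: then $[p,q]$ is a horizontal $1 \times k$ interval and its two lower corners are $a_j$-type vertices, one diagonal and one anti-diagonal, giving $1 = 1$; (iii) $[p,q] \subseteq \cB_m$, further split into $[p,q] = A$ (all four corners of $A$ lie in $L_{\cB}$, giving $2 = 2$), $A \subsetneq [p,q]$ (the two new outer corners lie outside $L_{\cB}$, so the balance remains $1 = 1$), and $V([p,q]) \cap V(A) = \emptyset$ ($0 = 0$); (iv) $[p,q]$ is the vertical $1 \times 2$ interval formed by $A_n$ and $A$, where exactly $a$ (diagonal) and $d$ (anti-diagonal) lie in $L_{\cB}$, giving $1 = 1$; (v) $[p,q]$ is a $1 \times 2$ vertical interval straddling another transition of the ladder in which $\cB_{m-1}$ is involved, whose two lower corners are both $a_j$-type vertices. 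Larger straddling rectangles of size $2 \times k$ or $k \times 2$ with $k \geq 2$ cannot occur, since the maximality of the blocks together with the ladder property $|V(\cB_i) \cap V(\cB_{i+1})| = 2$ prevents the required auxiliary cells from lying in $\cP$.

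The main obstacle lies in the bookkeeping of case (iii), namely the inner intervals in $\cB_m$ that contain $A$ strictly. With the rotation/reflection convention fixed at the start of the section, $\cB_m$ extends from $A$ in a definite horizontal direction; one must verify that every such extension replaces exactly one $L_{\cB}$-corner of $A$ on the diagonal side and one on the anti-diagonal side with corners lying outside $L_{\cB}$. The choice of $d$ as the anti-diagonal corner of $A$ distinct from $a_n$ is precisely what makes this replacement symmetric, so the balance identity holds uniformly across all cases.
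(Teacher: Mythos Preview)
Your reduction to the balance identity $|\{p,q\}\cap L_{\cB}|=|\{r,s\}\cap L_{\cB}|$ is exactly the right idea, and it is what the paper does as well. However, your block-by-block enumeration is not quite exhaustive. In case~(iii) you list only the sub-cases $[p,q]=A$, $A\subsetneq[p,q]$, and $V([p,q])\cap V(A)=\emptyset$; you omit inner intervals of $\cB_m$ that start at the cell immediately to the right of $A$, for which $V([p,q])\cap V(A)=\{b,d\}$ yet $A\not\subseteq[p,q]$. There one has $p=d\in L_{\cB}$ and $r=b\in L_{\cB}$, so the balance is still $1=1$, but the case is missing from your list. Likewise, case~(i) as phrased does not cover horizontal intervals inside $\cB_{m-2}$ that contain the cell above $A_1$: their vertex sets meet $V(\cB_{m-1})$ along the common edge, so they are not ``disjoint from $V(\cB_{m-1})\cup V(A)$'', and they are not in any of (ii)--(v) either. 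These are easy to patch, but as written the enumeration leaks.

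The paper sidesteps this bookkeeping by organising the case split around the diagonal corners $p,q$ rather than the ambient block. After disposing of $[p,q]\cap L_{\cB}=\emptyset$, it notes that $p,q\in L_{\cB}$ forces $[p,q]=A$ (no two elements of $L_{\cB}$ are comparable except $a<b$), that $p\in L_{\cB}$, $q\notin L_{\cB}$ always yields exactly one of $r,s$ in $L_{\cB}$ by the structure of $\cP$, and that the case $p\notin L_{\cB}$, $q\in L_{\cB}$ is impossible because no point of $L_{\cB}$ can occur as the upper-right corner of an inner interval other than $A$. This corner-based split is exhaustive in three lines and makes the missing sub-cases of your (i) and (iii) automatic.
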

\begin{proof}
	Let $f$ be a binomial that is a generator of $I_{\cP}$. Then there exists an inner interval $[p,q]$ of $\cP$, such that $f=x_px_q-x_rx_s$, where $r,s$ are the anti-diagonal corners of $[p,q]$. If $[p,q]\cap L_{\cB}=\emptyset$, then $f\in J_{\cP}$. We suppose that  $[p,q]\cap L_{\cB}\neq\emptyset$. If $p,q\in L_{\cB}$, then $[p,q]=A$, so $f\in J_{\cP}$. If $p\in L_{\cB}$ and $q\notin L_{\cB}$, we have that either $r$ or $s$ belongs to $L_{\cB}$ for the structure of $\cP$, so $f\in J_{\cP}$. The case $p\notin L_{\cB}$ and $q\in L_{\cB}$ is not possible by construction. Then the desired conclusion follows.
\end{proof}

\noindent By Proposition \ref{I in J ladder conf} and the definition of $\phi:S\rightarrow T_\cP$, we can use Lemma~\ref{Lemma shikama1} in the next theorem, considering $J=J_\cP$.




\begin{thm}
		Let $\cP$ be a closed path with a ladder of $m$ steps ($m>2$). Then $I_{\cP}= J_{\cP}$.
		\label{proof2}
\end{thm}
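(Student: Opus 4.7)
The plan is to imitate the two-part strategy of Theorem \ref{proof1}. The inclusion $I_\cP \subseteq J_\cP$ is Proposition \ref{I in J ladder conf}, so only $J_\cP \subseteq I_\cP$ needs to be proved, and for this it suffices to show (a) every binomial of degree two in $J_\cP$ lies in $I_\cP$, and (b) every irredundant binomial of $J_\cP$ has degree two.

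For (a), I would take $f=x_px_q-x_rx_s \in J_\cP$ with $p,q$ diagonal corners of $[p,q]$, and split into cases according to $|\{p,q,r,s\}\cap L_\cB|$. Matching the exponents of $w$ on the two sides of $\phi(f^+)=\phi(f^-)$ forces the number of $L_\cB$-vertices in $\{p,q\}$ and in $\{r,s\}$ to agree, while comparing the corresponding $v_i$ and $h_j$ factors (exactly as in sub-cases I, II, III of Theorem \ref{proof1}) forces $r,s$ to be the anti-diagonal corners of $[p,q]$ and the four segments $[p,r],[p,s],[r,q],[s,q]$ to be edge intervals of $\cP$. Three geometric situations then remain: $[p,q]$ lies entirely in the union of $\cB_{m-1}$ and $\cB_m$ (so it is contained in $\cB_{m-1}$, equals $A$, or straddles the common edge, and in each case is an inner interval of $\cP$); $[p,q]$ has exactly one vertex in $L_\cB$ (handled by the ladder geometry, which shows that the interval is still inner); or $[p,q]$ avoids $L_\cB$ entirely, in which case one falls back on the fact that the sub-polyomino $\cP'$ of $\cP$ obtained by removing the consecutive cells touching $L_\cB$ is simple by Proposition \ref{P is a not closed path}(3), so that $J_{\cP'}=I_{\cP'}$ by \cite{Simple are prime}.

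For (b), I would suppose for contradiction that $f=f^+-f^- \in J_\cP$ is irredundant with $\deg f \geq 3$. If no variable of $f$ corresponds to a vertex of $L_\cB$, then $f$ lies in $J_{\cP'}=I_{\cP'}$ and remains irredundant, which is impossible for degree at least three since $\cP'$ is simple. Hence there is $v_1\in L_\cB$ with $x_{v_1}\mid f^+$, and balancing the $w$-exponent produces $v_1'\in L_\cB$ with $x_{v_1'}\mid f^-$; necessarily $v_1\neq v_1'$, otherwise $x_{v_1}$ factors out and contradicts irredundancy. Matching the $v$- and $h$-factors attached to $v_1$ and $v_1'$ yields vertices $v_2,v_3 \in V^+_f$ on the maximal vertical and horizontal edge intervals through $v_1'$ and $v_2',v_3' \in V^-_f$ on those through $v_1$. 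I would then case-split on the relative position of $v_1$ and $v_1'$ (common vertical edge interval, common horizontal edge interval, or distinct vertices of $L_\cB$ linked through the ladder), using in each case the ladder structure to produce an inner interval of $\cP$ to which Lemma \ref{Lemma shikama1} applies, yielding redundancy of $f$ and a contradiction.

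The main obstacle will be the case analysis in step (b). In contrast with Theorem \ref{proof1}, where the $w$-weighted cell $A$ is a single cell with only four corners, the ladder contributes the larger set $L_\cB$ of $w$-weighted vertices distributed along two adjacent blocks $\cB_{m-1}$ and $\cB_m$, so more geometric configurations for the pair $(v_1,v_1')$ must be dissected. The delicate point is to verify that, whenever $v_1$ and $v_1'$ lie in different blocks or on opposite ends of $\cB_{m-1}$, the hypothesis $m>2$ still produces an inner interval of $\cP$ joining either $v_1$ to some $v_i\in V^+_f$ or $v_1'$ to some $v_i'\in V^-_f$ in a way that makes Lemma \ref{Lemma shikama1} applicable; the assumption of at least three steps is what rules out the degenerate ladder shapes for which such an inner interval may fail to exist.
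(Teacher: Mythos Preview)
Your overall strategy matches the paper's proof: the same two-step plan (degree-two binomials, then irredundancy), the same use of Lemma~\ref{Lemma shikama1} driven by $w$-degree and $v_i,h_j$-matching, and the same reduction to the simple sub-polyomino $\cP'$ obtained by deleting the cells touching $L_\cB$.

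There is, however, one genuine missing step in your part (b). You write that if no variable of $f$ corresponds to a vertex of $L_\cB$, then $f$ already lies in $J_{\cP'}$. This is not automatic: when you delete the cells of $\cB_{m-1}$ and the cell $A$, the vertices $b_2,\dots,b_n$ (the upper-right corners of $A_2,\dots,A_n$) disappear from $V(\cP')$ even though they are \emph{not} in $L_\cB$. So a binomial avoiding $L_\cB$ could a priori still involve some $x_{b_i}$ and hence fail to live in the polynomial ring of $\cP'$. The paper closes this gap with a short extra argument: once one knows $f$ has no $L_\cB$-variable, one observes that the maximal vertical edge interval through each $b_i$ (for $i=2,\dots,n$) consists only of $b_i$ together with vertices of $L_\cB$; hence if $x_{b_i}\mid f^+$, matching the corresponding $v$-variable on the other side forces some $L_\cB$-vertex into $V_f^-$, a contradiction. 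Only after ruling out both $L_\cB$ and $\{b_2,\dots,b_n\}$ does the reduction to $J_{\cP'}=I_{\cP'}$ go through.

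A minor remark on part (a): your trichotomy ``entirely inside $\cB_{m-1}\cup\cB_m$ / exactly one vertex in $L_\cB$ / disjoint from $L_\cB$'' is not quite the right partition, since $L_\cB$ consists only of the \emph{lower} edge of $\cB_{m-1}$ together with $a,b,d$; an inner interval contained in $\cB_{m-1}$ already has two vertices in $L_\cB$ and two outside, and the $w$-degree argument forces $|\{p,q\}\cap L_\cB|=|\{r,s\}\cap L_\cB|$, so the odd case cannot occur. This does not break your argument, but the cases should be organised as in Theorem~\ref{proof1} by the common value of $|\{p,q\}\cap L_\cB|\in\{0,1,2\}$.
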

\begin{proof}
Let $\cB=\{\cB_i\}_{i=1,\dots,m}$ be a maximal ladder of $m$ steps, $m>2$, where $\cB_1,\dots,\cB_m$ are in horizontal position and the ladder is going down. By Proposition \ref{I in J ladder conf}, we have $I_{\cP}\subseteq J_{\cP}$. Similar arguments as in (1) of Theorem \ref{Path with L conf is toric} allow us to prove that every binomial of degree two in $J_{\cP}$ belongs to $I_{\cP}$. We prove that every irredundant binomial in $J_{\cP}$ is of degree two. We suppose that there exists a binomial $f$ in $J_{\cP}$ with $\deg f\geq 3$, such that $f$ is irredundant. We prove that in $f$ there are not any variables associated to the vertices of $L_{\cB}$. 
 We suppose that there exists $v_1\in L_{\cB}$, such that $x_{v_1}$ divides $f^+$, that is $v_1\in V^+_f$. As in the proof of Theorem \ref{Path with L conf is toric}, we can find a vertex $v_1'\in L_{\cB}\cap V^-_{f}$, two vertices $v_2',v_3'\in V^-_{f}$ which are respectively on the same maximal vertical and horizontal edge intervals of $\cP$ containing $v_1$, and two vertices $v_2,v_3\in V^+_{f}$ which are  respectively on the same vertical and horizontal edge intervals of $\cP$ containing $v_1'$.
 The following cases could occur:
 \begin{enumerate}[(I)]
 	\item $v_1$ and $v_1'$ are on the same vertical edge interval of $\cP$. 
 	\begin{figure}[h]
 		\centering
 \begin{tikzpicture}[line cap=round,line join=round,>=triangle 45,x=1.0cm,y=1.0cm]
 \clip(-6.,-3.) rectangle (3.,1.);
 \fill[line width=0.8pt,fill=black,fill opacity=0.10000000149011612] (-4.,-1.) -- (-3.5,-1.) -- (-3.5,-0.5) -- (-4.,-0.5) -- cycle;
 \fill[line width=0.8pt,fill=black,fill opacity=0.10000000149011612] (-3.5,-1.) -- (-3.,-1.) -- (-3.,-0.5) -- (-3.5,-0.5) -- cycle;
 \fill[line width=0.8pt,fill=black,fill opacity=0.10000000149011612] (-3.,-1.) -- (-2.5,-1.) -- (-2.5,-0.5) -- (-3.,-0.5) -- cycle;
 \fill[line width=0.8pt,fill=black,fill opacity=0.10000000149011612] (-3.,-1.) -- (-3.,-1.5) -- (-2.5,-1.5) -- (-2.5,-1.) -- cycle;
 \fill[line width=0.8pt,fill=black,fill opacity=0.10000000149011612] (-2.5,-1.) -- (-2.5,-1.5) -- (-2.,-1.5) -- (-2.,-1.) -- cycle;
 \fill[line width=0.8pt,fill=black,fill opacity=0.10000000149011612] (-2.,-1.) -- (-2.,-1.5) -- (-1.5,-1.5) -- (-1.5,-1.) -- cycle;
 \fill[line width=0.8pt,fill=black,fill opacity=0.25] (-2.,-2.) -- (-1.5,-2.) -- (-1.5,-1.5) -- (-2.,-1.5) -- cycle;
 \fill[line width=0.8pt,fill=black,fill opacity=0.25] (-1.5,-2.) -- (-1.,-2.) -- (-1.,-1.5) -- (-1.5,-1.5) -- cycle;
 \fill[line width=0.8pt,fill=black,fill opacity=0.25] (-1.,-2.) -- (-0.5,-2.) -- (-0.5,-1.5) -- (-1.,-1.5) -- cycle;
 \fill[line width=0.8pt,fill=black,fill opacity=0.25] (-0.5,-2.) -- (0.,-2.) -- (0.,-1.5) -- (-0.5,-1.5) -- cycle;
 \fill[line width=0.8pt,fill=black,fill opacity=0.10000000149011612] (-0.5,-1.5) -- (0.,-1.5) -- (0.,-1.) -- (-0.5,-1.) -- cycle;
 \fill[line width=0.8pt,fill=black,fill opacity=0.10000000149011612] (0.,-1.5) -- (0.5,-1.5) -- (0.5,-1.) -- (0.,-1.) -- cycle;
 \fill[line width=0.8pt,fill=black,fill opacity=0.10000000149011612] (-4.5,0.) -- (-4.,0.) -- (-4.,0.5) -- (-4.5,0.5) -- cycle;
 \fill[line width=0.8pt,fill=black,fill opacity=0.10000000149011612] (-5.,0.) -- (-4.5,0.) -- (-4.5,0.5) -- (-5.,0.5) -- cycle;
 \draw [line width=0.8pt] (-4.,-1.)-- (-3.5,-1.);
 \draw [line width=0.8pt] (-3.5,-1.)-- (-3.5,-0.5);
 \draw [line width=0.8pt] (-3.5,-0.5)-- (-4.,-0.5);
 \draw [line width=0.8pt] (-4.,-0.5)-- (-4.,-1.);
 \draw [line width=0.8pt] (-3.5,-1.)-- (-3.,-1.);
 \draw [line width=0.8pt] (-3.,-1.)-- (-3.,-0.5);
 \draw [line width=0.8pt] (-3.,-0.5)-- (-3.5,-0.5);
 \draw [line width=0.8pt] (-3.5,-0.5)-- (-3.5,-1.);
 \draw [line width=0.8pt] (-3.,-1.)-- (-2.5,-1.);
 \draw [line width=0.8pt] (-2.5,-1.)-- (-2.5,-0.5);
 \draw [line width=0.8pt] (-2.5,-0.5)-- (-3.,-0.5);
 \draw [line width=0.8pt] (-3.,-0.5)-- (-3.,-1.);
 \draw [line width=0.8pt] (-3.,-1.)-- (-3.,-1.5);
 \draw [line width=0.8pt] (-3.,-1.5)-- (-2.5,-1.5);
 \draw [line width=0.8pt] (-2.5,-1.5)-- (-2.5,-1.);
 \draw [line width=0.8pt] (-2.5,-1.)-- (-3.,-1.);
 \draw [line width=0.8pt] (-2.5,-1.)-- (-2.5,-1.5);
 \draw [line width=0.8pt] (-2.5,-1.5)-- (-2.,-1.5);
 \draw [line width=0.8pt] (-2.,-1.5)-- (-2.,-1.);
 \draw [line width=0.8pt] (-2.,-1.)-- (-2.5,-1.);
 \draw [line width=0.8pt] (-2.,-1.)-- (-2.,-1.5);
 \draw [line width=0.8pt] (-2.,-1.5)-- (-1.5,-1.5);
 \draw [line width=0.8pt] (-1.5,-1.5)-- (-1.5,-1.);
 \draw [line width=0.8pt] (-1.5,-1.)-- (-2.,-1.);
 \draw [line width=0.8pt] (-2.,-2.)-- (-1.5,-2.);
 \draw [line width=0.8pt] (-1.5,-2.)-- (-1.5,-1.5);
 \draw [line width=0.8pt] (-1.5,-1.5)-- (-2.,-1.5);
 \draw [line width=0.8pt] (-2.,-1.5)-- (-2.,-2.);
 \draw [line width=0.8pt] (-1.5,-2.)-- (-1.,-2.);
 \draw [line width=0.8pt] (-1.,-2.)-- (-1.,-1.5);
 \draw [line width=0.8pt] (-1.,-1.5)-- (-1.5,-1.5);
 \draw [line width=0.8pt] (-1.5,-1.5)-- (-1.5,-2.);
 \draw [line width=0.8pt] (-1.,-2.)-- (-0.5,-2.);
 \draw [line width=0.8pt] (-0.5,-2.)-- (-0.5,-1.5);
 \draw [line width=0.8pt] (-0.5,-1.5)-- (-1.,-1.5);
 \draw [line width=0.8pt] (-1.,-1.5)-- (-1.,-2.);
 \draw [line width=0.8pt] (-0.5,-2.)-- (0.,-2.);
 \draw [line width=0.8pt] (0.,-2.)-- (0.,-1.5);
 \draw [line width=0.8pt] (0.,-1.5)-- (-0.5,-1.5);
 \draw [line width=0.8pt] (-0.5,-1.5)-- (-0.5,-2.);
 \draw [line width=0.8pt] (-0.5,-1.5)-- (0.,-1.5);
 \draw [line width=0.8pt] (0.,-1.5)-- (0.,-1.);
 \draw [line width=0.8pt] (0.,-1.)-- (-0.5,-1.);
 \draw [line width=0.8pt] (-0.5,-1.)-- (-0.5,-1.5);
 \draw [line width=0.8pt] (0.,-1.5)-- (0.5,-1.5);
 \draw [line width=0.8pt] (0.5,-1.5)-- (0.5,-1.);
 \draw [line width=0.8pt] (0.5,-1.)-- (0.,-1.);
 \draw [line width=0.8pt] (0.,-1.)-- (0.,-1.5);
 \draw (-3.98,0.56) node[anchor=north west] {$\mathcal{B}_1$};
 \draw [line width=0.8pt] (-4.5,0.)-- (-4.,0.);
 \draw [line width=0.8pt] (-4.,0.)-- (-4.,0.5);
 \draw [line width=0.8pt] (-4.,0.5)-- (-4.5,0.5);
 \draw [line width=0.8pt] (-4.5,0.5)-- (-4.5,0.);
 \draw [line width=0.8pt] (-5.,0.)-- (-4.5,0.);
 \draw [line width=0.8pt] (-4.5,0.)-- (-4.5,0.5);
 \draw [line width=0.8pt] (-4.5,0.5)-- (-5.,0.5);
 \draw [line width=0.8pt] (-5.,0.5)-- (-5.,0.);
 \draw (-3.58,0.1) node[anchor=north west] {$\mathcal{B}_{m-2}$};
 \draw (-2.42,-0.48) node[anchor=north west] {$\mathcal{B}_{m-1}$};
 \draw (-1.24,-0.94) node[anchor=north west] {$\mathcal{B}_m$};
 \draw (-2.06,-1.5) node[anchor=north west] {$A$};
 \draw (-2.5,-1.5) node[anchor=north west] {$v_1$};
 \draw (-2.5,-1.98) node[anchor=north west] {$v_1'$};
 \draw (-0.22,-2.12) node[anchor=north west] {$v_3$};
 \begin{scriptsize}
 \draw [fill=black] (-4.317902983809103,-0.1473382036869498) circle (0.5pt);
 \draw [fill=black] (-4.214255071138008,-0.2604086538735984) circle (0.5pt);
 \draw [fill=black] (-4.091762083435806,-0.373479104060247) circle (0.5pt);
 \draw [fill=black] (0.25202771123461104,-0.8163383672912873) circle (0.5pt);
 \draw [fill=black] (0.25202771123461104,-0.6561552295268684) circle (0.5pt);
 \draw [fill=black] (0.628929211856773,-1.2309300179756655) circle (0.5pt);
 \draw [fill=black] (0.7702672745900837,-1.2309300179756655) circle (0.5pt);
 \draw [fill=black] (0.9304504123545025,-1.2215074804601114) circle (0.5pt);
 \draw [fill=black] (0.25202771123461104,-0.47712701673134156) circle (0.5pt);
 \end{scriptsize}
 \end{tikzpicture}
 		\caption{}
 		\label{ladder v_1 e V_1' vertical}
 	\end{figure}
 For the structure of $\cP$ either $v_3$ or $v_3'$ is a vertex which identifies an inner interval of $\cP$ along with $v_1$ and $v_1'$ (see Figure \ref{ladder v_1 e V_1' vertical}). Lemma \ref{Lemma shikama1} leads to a contradiction.
 \item $v_1$ and $v_1'$ are on the same horizontal edge interval of $\cP$. If $\{v_1,v_1'\}=\{a,d\}$ or $\{v_1,v_1'\}=\{a_n,b\}$ or $\{v_1,v_1'\}\subseteq \{a_1,\dots,a_{n-1}\}$ with $n>2$, then either $v_2$ or $v_2'$ is a vertex which identifies an inner interval along with $v_1$ and $v_1'$. By using Lemma \ref{Lemma shikama1}, we have a contradiction. We suppose that $v_1\in \{a_1,\dots,a_{n-1}\}$ and $v_1'\in\{a_n,b\}$ or vice versa. We may assume that $v_1'=b$, because similar arguments hold when $v_1'=a_n$. If $v_2\notin L_{\cB}$, then we have a contradiction, using Lemma \ref{Lemma shikama1} to the vertices $v_1,v_1'$ and $v_2$. Let $v_2$ be in $L_{\cB}$; in particular the only possibility is $v_2=d$. Let $h_{v_2}$ be the variable associated with the horizontal interval of $v_2$. Then $h_{v_2}$ divides $\phi(f^+) =\phi(f^-)$, so we have two possibilities. The first one is $v_2\in V_f^-$, so $f=x_{v_{2}}(\tilde{f}^+-\tilde{f}^-)$, that is $f$ is not irredundant. Alternatively, there exists $\tilde{v}\in V_f^-$ such that $\tilde{v}$ is in the same horizontal edge interval of $v_2$; in particular $f$ is not irredundant by Lemma~\ref{Lemma shikama1} applied to the vertices $v_1',v_2,\tilde{v}$. In both cases we have a contradiction.
 \item $v_1$ and $v_1'$ are not on the same horizontal or vertical edge intervals of $\cP$. If they are diagonal or anti-diagonal vertices of $A$, then we have a contradiction, by similar arguments as in the last case (III) of Theorem \ref{Path with L conf is toric}. We suppose that $v_1\in\{a_1,\dots,a_{n-1}\}$ and $v_1' \in \{a,d\}$ (or vice versa). We may assume that $v_1'=d$, because similar arguments holds when $v_1'=a$. The vertex $v_2$ does not belong to $L_{\cB}$, otherwise we have a contradiction as in the previous case, so $[v_1,v_2]$ is an inner interval of $\cP$. We denote by $g,h$ the anti-diagonal vertices of $[v_1,v_2]$. We observe that $v_3\notin L_{\cB}$, otherwise we have a contradiction using the usual considerations to vertices $v_2,v_3,v_1'$. Then $h,v_3$ identify an inner interval of $\cP$, with $v_1'$ as diagonal corner (see Figure~\ref{v_1 and v_1' aren't on the same ,ladder} ).
 \begin{figure}[h]
 	\centering
\definecolor{uuuuuu}{rgb}{0.26666666666666666,0.26666666666666666,0.26666666666666666}
\begin{tikzpicture}[line cap=round,line join=round,>=triangle 45,x=1.0cm,y=1.0cm]
\clip(-6.,-3.) rectangle (3.,1.);
\fill[line width=0.8pt,fill=black,fill opacity=0.10000000149011612] (-4.,-1.) -- (-3.5,-1.) -- (-3.5,-0.5) -- (-4.,-0.5) -- cycle;
\fill[line width=0.8pt,fill=black,fill opacity=0.10000000149011612] (-3.5,-1.) -- (-3.,-1.) -- (-3.,-0.5) -- (-3.5,-0.5) -- cycle;
\fill[line width=0.8pt,fill=black,fill opacity=0.10000000149011612] (-3.,-1.) -- (-2.5,-1.) -- (-2.5,-0.5) -- (-3.,-0.5) -- cycle;
\fill[line width=0.8pt,fill=black,fill opacity=0.10000000149011612] (-3.,-1.) -- (-3.,-1.5) -- (-2.5,-1.5) -- (-2.5,-1.) -- cycle;
\fill[line width=0.8pt,fill=black,fill opacity=0.25] (-2.5,-1.) -- (-2.5,-1.5) -- (-2.,-1.5) -- (-2.,-1.) -- cycle;
\fill[line width=0.8pt,fill=black,fill opacity=0.25] (-2.,-1.) -- (-2.,-1.5) -- (-1.5,-1.5) -- (-1.5,-1.) -- cycle;
\fill[line width=0.8pt,fill=black,fill opacity=0.10000000149011612] (-2.,-2.) -- (-1.5,-2.) -- (-1.5,-1.5) -- (-2.,-1.5) -- cycle;
\fill[line width=0.8pt,fill=black,fill opacity=0.4000000059604645] (-1.5,-2.) -- (-1.,-2.) -- (-1.,-1.5) -- (-1.5,-1.5) -- cycle;
\fill[line width=0.8pt,fill=black,fill opacity=0.4000000059604645] (-1.,-2.) -- (-0.5,-2.) -- (-0.5,-1.5) -- (-1.,-1.5) -- cycle;
\fill[line width=0.8pt,fill=black,fill opacity=0.4000000059604645] (-0.5,-2.) -- (0.,-2.) -- (0.,-1.5) -- (-0.5,-1.5) -- cycle;
\fill[line width=0.8pt,fill=black,fill opacity=0.10000000149011612] (-0.5,-1.5) -- (0.,-1.5) -- (0.,-1.) -- (-0.5,-1.) -- cycle;
\fill[line width=0.8pt,fill=black,fill opacity=0.10000000149011612] (0.,-1.5) -- (0.5,-1.5) -- (0.5,-1.) -- (0.,-1.) -- cycle;
\fill[line width=0.8pt,fill=black,fill opacity=0.10000000149011612] (-4.5,0.) -- (-4.,0.) -- (-4.,0.5) -- (-4.5,0.5) -- cycle;
\fill[line width=0.8pt,fill=black,fill opacity=0.10000000149011612] (-5.,0.) -- (-4.5,0.) -- (-4.5,0.5) -- (-5.,0.5) -- cycle;
\draw [line width=0.8pt] (-4.,-1.)-- (-3.5,-1.);
\draw [line width=0.8pt] (-3.5,-1.)-- (-3.5,-0.5);
\draw [line width=0.8pt] (-3.5,-0.5)-- (-4.,-0.5);
\draw [line width=0.8pt] (-4.,-0.5)-- (-4.,-1.);
\draw [line width=0.8pt] (-3.5,-1.)-- (-3.,-1.);
\draw [line width=0.8pt] (-3.,-1.)-- (-3.,-0.5);
\draw [line width=0.8pt] (-3.,-0.5)-- (-3.5,-0.5);
\draw [line width=0.8pt] (-3.5,-0.5)-- (-3.5,-1.);
\draw [line width=0.8pt] (-3.,-1.)-- (-2.5,-1.);
\draw [line width=0.8pt] (-2.5,-1.)-- (-2.5,-0.5);
\draw [line width=0.8pt] (-2.5,-0.5)-- (-3.,-0.5);
\draw [line width=0.8pt] (-3.,-0.5)-- (-3.,-1.);
\draw [line width=0.8pt] (-3.,-1.)-- (-3.,-1.5);
\draw [line width=0.8pt] (-3.,-1.5)-- (-2.5,-1.5);
\draw [line width=0.8pt] (-2.5,-1.5)-- (-2.5,-1.);
\draw [line width=0.8pt] (-2.5,-1.)-- (-3.,-1.);
\draw [line width=0.8pt] (-2.5,-1.)-- (-2.5,-1.5);
\draw [line width=0.8pt] (-2.5,-1.5)-- (-2.,-1.5);
\draw [line width=0.8pt] (-2.,-1.5)-- (-2.,-1.);
\draw [line width=0.8pt] (-2.,-1.)-- (-2.5,-1.);
\draw [line width=0.8pt] (-2.,-1.)-- (-2.,-1.5);
\draw [line width=0.8pt] (-2.,-1.5)-- (-1.5,-1.5);
\draw [line width=0.8pt] (-1.5,-1.5)-- (-1.5,-1.);
\draw [line width=0.8pt] (-1.5,-1.)-- (-2.,-1.);
\draw [line width=0.8pt] (-2.,-2.)-- (-1.5,-2.);
\draw [line width=0.8pt] (-1.5,-2.)-- (-1.5,-1.5);
\draw [line width=0.8pt] (-1.5,-1.5)-- (-2.,-1.5);
\draw [line width=0.8pt] (-2.,-1.5)-- (-2.,-2.);
\draw [line width=0.8pt] (-1.5,-2.)-- (-1.,-2.);
\draw [line width=0.8pt] (-1.,-2.)-- (-1.,-1.5);
\draw [line width=0.8pt] (-1.,-1.5)-- (-1.5,-1.5);
\draw [line width=0.8pt] (-1.5,-1.5)-- (-1.5,-2.);
\draw [line width=0.8pt] (-1.,-2.)-- (-0.5,-2.);
\draw [line width=0.8pt] (-0.5,-2.)-- (-0.5,-1.5);
\draw [line width=0.8pt] (-0.5,-1.5)-- (-1.,-1.5);
\draw [line width=0.8pt] (-1.,-1.5)-- (-1.,-2.);
\draw [line width=0.8pt] (-0.5,-2.)-- (0.,-2.);
\draw [line width=0.8pt] (0.,-2.)-- (0.,-1.5);
\draw [line width=0.8pt] (0.,-1.5)-- (-0.5,-1.5);
\draw [line width=0.8pt] (-0.5,-1.5)-- (-0.5,-2.);
\draw [line width=0.8pt] (-0.5,-1.5)-- (0.,-1.5);
\draw [line width=0.8pt] (0.,-1.5)-- (0.,-1.);
\draw [line width=0.8pt] (0.,-1.)-- (-0.5,-1.);
\draw [line width=0.8pt] (-0.5,-1.)-- (-0.5,-1.5);
\draw [line width=0.8pt] (0.,-1.5)-- (0.5,-1.5);
\draw [line width=0.8pt] (0.5,-1.5)-- (0.5,-1.);
\draw [line width=0.8pt] (0.5,-1.)-- (0.,-1.);
\draw [line width=0.8pt] (0.,-1.)-- (0.,-1.5);
\draw (-3.98162701774016,0.4934041688675306) node[anchor=north west] {$\mathcal{B}_1$};
\draw [line width=0.8pt] (-4.5,0.)-- (-4.,0.);
\draw [line width=0.8pt] (-4.,0.)-- (-4.,0.5);
\draw [line width=0.8pt] (-4.,0.5)-- (-4.5,0.5);
\draw [line width=0.8pt] (-4.5,0.5)-- (-4.5,0.);
\draw [line width=0.8pt] (-5.,0.)-- (-4.5,0.);
\draw [line width=0.8pt] (-4.5,0.)-- (-4.5,0.5);
\draw [line width=0.8pt] (-4.5,0.5)-- (-5.,0.5);
\draw [line width=0.8pt] (-5.,0.5)-- (-5.,0.);
\draw (-5.264452061755743,-0.7324064287473593) node[anchor=north west] {$\mathcal{B}_{m-2}$};
\draw (-4.124163133741892,-1.1315075535522072) node[anchor=north west] {$\mathcal{B}_{m-1}$};
\draw (-1.0311294165043192,-2.2290356467655386) node[anchor=north west] {$\mathcal{B}_m$};
\draw (-2.0573894517167854,-1.5591159015574012) node[anchor=north west] {$A$};
\draw (-2.784323643325616,-1.5591159015574012) node[anchor=north west] {$v_1$};
\draw (-1.7438099965129763,-2.0152314727629417) node[anchor=north west] {$v_1'$};
\draw (-1.5585130457107255,-0.6183775359459742) node[anchor=north west] {$v_2$};
\draw (-2.499251411322153,-0.6041239243458011) node[anchor=north west] {$g$};
\draw (-0.16165910889375745,-2.072245919163634) node[anchor=north west] {$v_3$};
\draw (-1.5157522109102062,-1.0744931071515147) node[anchor=north west] {$h$};
\begin{scriptsize}
\draw [fill=black] (-4.317902983809103,-0.1473382036869498) circle (0.5pt);
\draw [fill=black] (-4.214255071138008,-0.2604086538735984) circle (0.5pt);
\draw [fill=black] (-4.091762083435806,-0.373479104060247) circle (0.5pt);
\draw [fill=black] (0.25202771123461104,-0.8163383672912873) circle (0.5pt);
\draw [fill=black] (0.25202771123461104,-0.6561552295268684) circle (0.5pt);
\draw [fill=black] (0.628929211856773,-1.2309300179756655) circle (0.5pt);
\draw [fill=black] (0.7702672745900837,-1.2309300179756655) circle (0.5pt);
\draw [fill=black] (0.9304504123545025,-1.2215074804601114) circle (0.5pt);
\draw [fill=black] (0.25202771123461104,-0.47712701673134156) circle (0.5pt);
\draw [fill=uuuuuu] (-2.5,-1.5) circle (1.5pt);
\draw [fill=uuuuuu] (-1.5,-2.) circle (1.5pt);
\draw [fill=uuuuuu] (-1.5,-1.) circle (1.5pt);
\draw [fill=uuuuuu] (0.,-2.) circle (1.5pt);
\draw [fill=uuuuuu] (-2.5,-1.) circle (1.5pt);
\draw [fill=uuuuuu] (-1.5,-1.5) circle (1.5pt);
\end{scriptsize}
\end{tikzpicture}
 	\caption{}
 	\label{v_1 and v_1' aren't on the same ,ladder}
 \end{figure}

\noindent Then:
$$ f=f^+-f^-=\frac{f^+}{x_{v_1}x_{v_2}}(x_{v_1}x_{v_2}-x_gx_h)+\frac{f^+}{x_{v_1}x_{v_2}}x_gx_h-f^-.$$
Since $[v_1,v_2]$ is an inner interval of $\cP$, then $x_{v_1}x_{v_2}-x_gx_h \in I_{\cP}\subseteq J_{\cP}$. We set $\tilde{f}=\frac{f^+}{x_{v_1}x_{v_2}}x_gx_h-f^-$, $f_1=\frac{f^+}{x_{v_1}x_{v_2}}x_gx_h$ and $f_2=f^-$, so $\tilde{f}=f_1-f_2$. We observe that $\tilde{f}\in J_{\cP}$, $x_{v_3}x_h$ divides $f_1$ and $x_{v_1'}$ divides $f_2$. Since $v_3,h\in V^+_{\tilde{f}}$ and $v_1'\in V^-_{\tilde{f}}$, by Lemma \ref{Lemma shikama1}, we have that $\tilde{f}$ is redundant in $J_{\cP}$. Then $f$ is redundant in $J_{\cP}$, that is a contradiction. 
 \end{enumerate}
Summarizing, in $f$ there are not any variables associated to any vertices of $L_{\cB}$. We denote by $b_i$ the upper right corner of the cell $A_i$, for all $i=1,\dots,n$. We prove that in $f$ there is no variable associated to a vertex in $\{b_2,\dots,b_n\}$. We suppose that there exists $i\in\{2,\dots,n\}$ such that $x_{b_i}$ divides $f^+$. Let $V_{b_i}$ be the maximal vertical edge interval of $\cP$ such that $b_i\in V_{b_i}$. Since $b_{i}\in V^+_f$, there exists a vertex $v\in V_{b_i}\backslash\{b_i\}$, such that $x_v$ divides $f^-$. For the structure of $\cP$, the vertex $v$ belongs to $L_{\cB}$ and $v\in V^-_{f}$, that is a contradiction. Now we set $\cF=L_{\cB}\cup \{b_2,\dots,b_n\}$. In conclusion, in $f$ there are only variables $x_v$, such that $v\in V(\cP)\backslash \cF$. We denote by $\cP'$ the simple polyomino consisting of the cells of $\cP$ that do not have a vertex in $L_{\cB}$, and by $I_{\cP'}$ the polyomino ideal associated to $\cP'$. By Theorem 2.2 in \cite{Simple are prime}, we have that $J_{\cP'}=I_{\cP'}$. Moreover $f$ is a binomial in $J_{\cP'}$. Since $J_{\cP'}\subset J_{\cP}$ and $f$ is irredundant in $J_{\cP}$, then $f$ is irredundant in $J_{\cP'}$. It follows that $f$ is irredundant in $I_{\cP'}$, that is a contradiction. In conclusion we have $J_{\cP}\subseteq I_{\cP}$. 
\end{proof}
\begin{coro}\label{P closed path ladder then prime}
		Let $\cP$ be a closed path with a ladder of $m$ steps ($m>2$). Then $I_{\cP}$ is prime.
\end{coro}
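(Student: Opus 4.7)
The plan is to deduce primality of $I_{\cP}$ immediately from the previous theorem, which establishes the equality $I_{\cP}=J_{\cP}$ for any closed path $\cP$ with a ladder of $m\geq 3$ steps. The key observation is that $J_{\cP}$ was defined as the kernel of the ring homomorphism $\phi: S\rightarrow T_{\cP}$, where $T_{\cP}=K[\alpha(v):v\in V(\cP)]$ sits inside the polynomial ring $K[\{v_i,h_j,w\}:i\in I,j\in J]$. Since $K$ is a field, this polynomial ring is an integral domain, hence so is its subring $T_{\cP}$.

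The standard argument is then: for any ring homomorphism $\phi: S \rightarrow R$ with $R$ an integral domain, the kernel $\ker(\phi)$ is a prime ideal of $S$. Indeed, if $fg\in \ker(\phi)$ then $\phi(f)\phi(g)=0$ in $R$, and since $R$ is a domain either $\phi(f)=0$ or $\phi(g)=0$, i.e. $f\in \ker(\phi)$ or $g\in \ker(\phi)$. Applying this to $\phi: S\rightarrow T_{\cP}$ shows $J_{\cP}$ is prime.

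Combining the previous Theorem with this observation yields $I_{\cP}=J_{\cP}$ prime, which is the desired conclusion. There is no substantive obstacle here since all the hard work has been done in the toric representation theorem: the proof is a one-line invocation. I would simply write that by the previous theorem $I_{\cP}=J_{\cP}$ is a toric ideal, hence prime.
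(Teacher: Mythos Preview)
Your proposal is correct and matches the paper's approach: the corollary is stated without proof, as it follows immediately from the preceding theorem since $J_{\cP}$ is the kernel of a homomorphism into a subring of a polynomial ring over a field, hence a prime (toric) ideal.
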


\section{Primality of closed paths and zig-zag walks}\label{Section main result}
Let $\cP$ be a polyomino. In \cite{Trento} the authors have shown that if $I_{\cP}$ is prime then $\cP$ contains no zig-zag walks and they have conjectured that it is a sufficient condition for the primality of $I_{\cP}$. We recall that the rank of $\cP$, denoted by $\rank(\cP)$, is the  number of the cells of $\cP$. Using a computational method, they have shown that the conjecture is verified for $\rank(\cP)\leq 14$. Here we prove that the conjecture is true for the class of closed paths.

\begin{prop}\label{Ha zig zag2}
	Let $\cP$ be a closed path and suppose that $\cP$ has no zig-zag walks. Then $\cP$ has an L-configuration or a ladder with at least three steps.
	\label{pathNoZig}
	\end{prop}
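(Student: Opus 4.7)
The plan is to prove the contrapositive: assume $\cP$ is a closed path containing neither an L-configuration nor a ladder with at least three steps, and construct a zig-zag walk explicitly. First, I would analyze the cyclic sequence of maximal blocks $M_1,\dots,M_k$ of $\cP$. By Lemma~\ref{A closed path contains a block} at least one block has length $\geq 3$; the no L-configuration hypothesis then forces the length-$\geq 3$ blocks to be isolated by length-$2$ blocks in the cyclic sequence. The no 3-step ladder hypothesis combined with the closed-path axiom~(4) of Definition~\ref{def closed path} forces a rigid local structure: whenever three consecutive same-direction maximal blocks $H_i, H_{i+1}, H_{i+2}$ are separated by perpendicular length-$2$ blocks, a staircase is forbidden, so the configuration must be a ``zigzag'' with $H_i$ and $H_{i+2}$ lying in the same row or column; but to avoid violating axiom~(4), the intermediate block $H_{i+1}$ must itself have length $\geq 3$.

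Second, I would build the zig-zag walk by traversing the perimeter of the hole of $\cP$ and selecting, for each corner cell $C$ of $\cP$, an inner interval of $\cP$ supported in one of the two maximal blocks meeting at $C$. The intervals are chosen so that consecutive intervals $I_i, I_{i+1}$ share exactly the ``outer'' vertex of the corner cell where they meet (the vertex of the corner cell on the hole side), thereby supplying the shared vertex $v_{i+1}$; the designation of diagonal versus anti-diagonal corners in the definition of a zig-zag walk is fixed by this geometric choice and alternates with the orientation of the turn.

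Third, I would verify the three defining conditions of a zig-zag walk. Condition~(1), $I_i \cap I_{i+1} = \{v_{i+1}\}$, follows from the choice of the ``outer'' vertex at each corner. Condition~(2), that $v_i$ and $v_{i+1}$ lie on a common edge interval of $\cP$, is essentially the content of the no 3-step ladder hypothesis, which ensures that consecutive shared edges along the hole boundary lie on the same edge interval of $\cP$. Condition~(3), that no two $z_i, z_j$ with $i \neq j$ lie in a common inner interval, uses the fact that a closed path admits no $2 \times 2$ inner interval (all inner intervals of $\cP$ are thin strips in single maximal blocks), combined with the cyclic separation of the $z_i$'s around the hole.

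The principal difficulty is the careful case analysis required when maximal blocks of length $\geq 3$ appear: the inner intervals associated with such blocks span several cells, so the location of $z_i$ and the verification of conditions~(1) and (3) become non-trivial. Condition~(3) in particular is global in nature, and requires a geometric argument balancing the block lengths against the cyclic positions of the $z_i$'s around the hole to rule out any inner interval jointly containing two of them.
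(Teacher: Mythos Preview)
Your contrapositive strategy is logically equivalent to the paper's direct argument, and the overall shape---build a candidate sequence of inner intervals around the hole and check the three zig-zag conditions---is the same. However, the proposal underestimates where the actual work lies, and two of your verification steps contain real gaps.

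The serious gap is condition~(2). You assert that the no-ladder hypothesis ``ensures that consecutive shared edges along the hole boundary lie on the same edge interval of~$\cP$,'' but this is not automatic for the naive choice of intervals. In the paper's proof, the case analysis (their Figures~16 and~17) reveals a configuration where $\cP(I_\ell)$ is a single cell sitting inside a two-cell maximal block~$\cB$; here $v_\ell$ and $v_1$ genuinely fail to lie on a common edge interval, yet no L-configuration or $3$-step ladder is present locally. The paper resolves this by \emph{merging} intervals: it replaces $I_1,\dots,I_\ell$ by a shorter sequence $I_1',\dots,I_{\ell-1}'$ absorbing the offending cell into its neighbours, and then iterates. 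Your construction (``traverse the hole boundary and pick the outer vertex'') gives no mechanism for this merging, so your candidate sequence can fail condition~(2) without contradicting the hypotheses. You would need either to build the merging into your initial construction or to run an analogous reduction argument.

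The argument for condition~(3) is also too vague. ``Cyclic separation of the $z_i$'s around the hole'' does not by itself prevent two of them from lying in a common inner interval. The actual mechanism, which the paper makes explicit, is that if $z_i$ and $z_j$ lie in a common inner interval~$J$ of a closed path, then the structure forces $j=i\pm 1$, and the maximal blocks $\cB_i\supseteq\cP(I_i)$ and $\cB_{i+1}\supseteq\cP(I_{i+1})$ are perpendicular and each of length at least three---precisely an L-configuration. That geometric observation is the content of condition~(3), and your sketch does not supply it.

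Finally, your first paragraph is imprecise: maximal blocks in a closed path alternate between horizontal and vertical, so ``three consecutive same-direction maximal blocks $H_i,H_{i+1},H_{i+2}$'' does not parse. Presumably you mean every other block, but the conclusion you draw (that the middle block must have length~$\geq 3$) does not obviously follow and is not needed for the paper's argument.
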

\begin{proof}
 The structure of $\cP$ assures that there exists at least a sequence of distinct inner intervals $I_1,\ldots,I_\ell$ such that $|I_i\cap I_{i+1}|=1$ for all $i=1,\ldots,\ell-1$ and $|I_\ell\cap I_1|=1$. 
Let $I_1\cap I_\ell=\{v_1=v_{\ell+1}\}$ and $I_i\cap I_{i+1}=\{v_{i+1}\}$ with $i\in \{1,\ldots,\ell-1\}$. Suppose that $v_\ell$ and $v_{1}$ are not in the same edge interval. After appropriate reflections or rotations, we can suppose that $I_\ell$ is a horizontal interval having $v_\ell$ and $v_{1}$ as diagonal corners. Let $\mathcal{B}$ be the maximal horizontal block of $\cP$ containing $\cP(I_\ell)$. We examine all possible different cases. 
  \begin{itemize}
  \item $\mathcal{B}$ contains at least three cells. We can suppose that $\cP$ has no L-configurations, otherwise we have finished. Then a part of the polyomino has the shape of Figure~\ref{fig:prop6.1A}(A), where $v_\ell \in\{a,b\}$ and $v_{1}\in\{c,d\}$. So we have a ladder with at least three steps. 
  \item $\mathcal{B}=\cP(I_\ell)$ and it contains exactly two cells. Then we are in the case of Figure~\ref{fig:prop6.1A}(B), where $v_\ell=a$ and $v_{1}=b$. We have again a ladder with at least three steps.
  \item $\mathcal{B}=\cP(I_\ell)$ is a cell. Under the assumption that $\cP$ has no L-configurations, we are in the case of  Figure~\ref{fig:prop6.1A}(C). In particular $\cP$ has a ladder with at least three steps.   
  \end{itemize}

\begin{figure}[h]
\subfloat[][]{\includegraphics[scale=0.8]{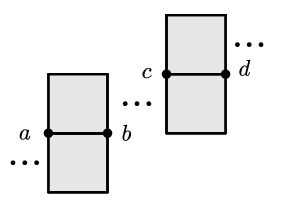}} \qquad\subfloat[][]{\includegraphics[scale=0.8]{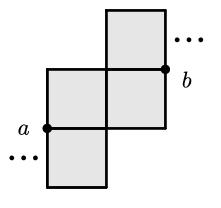}} 
\qquad\subfloat[][]{\includegraphics[scale=0.8]{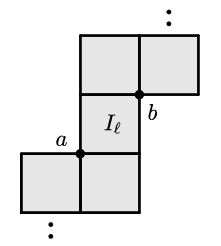}} 
\caption{}
\label{fig:prop6.1A}
\end{figure}

\noindent It remains to consider the case in which $I_\ell$ is a cell and $\mathcal{B}$ contains two cells. We prove that also in this case we obtain that $\cP$ contains an L-configuration or a ladder with at least three steps. After an appropriate reflection, we can reduce to the case in Figure~\ref{fig:prop6.1B}(A). Observe that if there is a cell in the direction West with respect to the cell A (that is the first cell of $I_{\ell-1}$), or in the direction North with respect to the cell D (that is the first cell of $I_1$), then $\cP$ has a ladder with at least three steps. So we can suppose that $\cP$ has an adjacent cell to $A$ in direction South and an adjacent cell to $D$ in direction East. In such a case we can define another sequence of intervals $I_1',\ldots,I_{\ell-1}'$, with $I_{\ell-1}'=I_{\ell-1} \cup B$, $I_{1}'=I_1 \cup C$ and $I_{i}'=I_{i}$ for $i\in \{2,\ldots,\ell-2\}$; in particular we denote $I_{1}'\cap I_{\ell-1}'=\{v_{1}'=v_{\ell}'\}$ and  $I_{i}'\cap I_{i+1}'=\{v_{i+1}'\}$ for $i\in \{1,\ldots,\ell-2\}$. So, we are in the situation of Figure~\ref{fig:prop6.1B}(B). Now suppose that $v_{1}'$ and $v_2'$ are not in the same edge interval. It is not difficult to see that in this case we are again in the situation of Figure~\ref{fig:prop6.1A}(A). So we can assume that $v_{1}'$ and $v_2'$ are in the same edge interval.  
\begin{figure}[h]
\subfloat[][]{\includegraphics[scale=0.8]{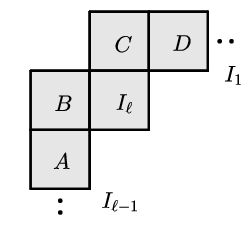}}  
\qquad\qquad\subfloat[][]{\includegraphics[scale=0.8]{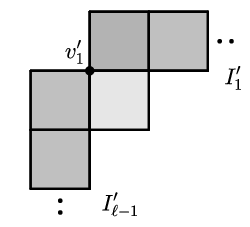}}
\caption{}
\label{fig:prop6.1B}
\end{figure}
\noindent The same conclusion can be obtained for the vertices $v_2'$ and $v_3'$ and so on. Therefore we can reduce the proof to the case that in the initial sequence of intervals $I_1,\ldots, I_\ell$ the vertices $v_i$ and $v_{i+1}$ belong to the same edge interval for every $i\in\{1,\ldots,\ell\}$. Since $\cP$ has no zig-zag walks, there exist $z_i$ and $z_j$ vertices of an inner interval $J$ of $\cP$, such that $v_i$ and $z_i$ are the diagonal or anti-diagonal corners of $I_i$, and $v_j$ and $z_j$ are the diagonal or anti-diagonal corners of $I_j$. Because of the structure of $\cP$ the only possibilities are $j=i+1$ or $j=i-1$. We can assume that $j=i+1$ and $v_{i+i}$ is a diagonal corner of $I_{i+1}$, so $v_i$ is an anti-diagonal corner of $I_i$. Let $\mathcal{B}_{i},\mathcal{B}_{i+1}$ be the maximal blocks of $\cP$ containing $\cP(I_i)$ and $\cP(I_{i+1})$ respectively. Observe that $\mathcal{B}_i$ and $\mathcal{B}_{i+1}$ are not both in horizontal or vertical position, since $J$ is an interval of $\cP$, that is a closed path. 
So we can assume that $\mathcal{B}_i$ is in vertical position and $\mathcal{B}_{i+1}$ is in horizontal position. Observe that each block has at least three cells; in particular we refer to Figure~\ref{fig:prop6.1} for the arrangement of this situation, observing that some appropriate cells with dashed lines must belong to the polyomino. In particular $\mathcal{B}_{i}\cup \mathcal{B}_{i+1}$ contains an L-configuration.
\begin{figure}[h!]
	\centering
	\includegraphics[scale=0.18]{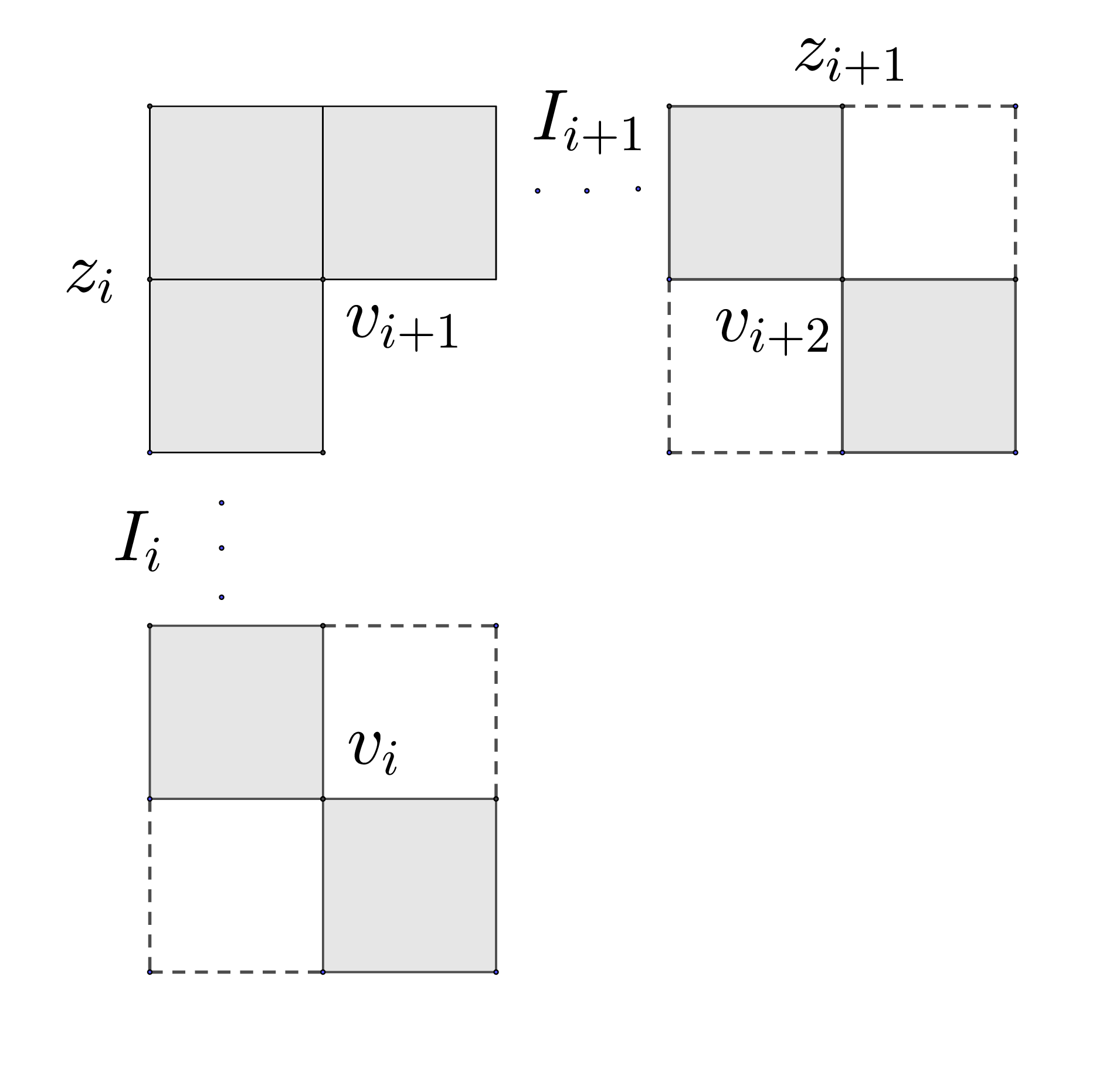}
	\caption{}
	\label{fig:prop6.1}
\end{figure}

\end{proof}

\noindent By Proposition~\ref{L-conf then no zig zag}, Proposition~\ref{ladder then no zig zag} and Proposition~\ref{pathNoZig} we deduce that having an L-configuration or a ladder with at least three steps is a necessary and sufficient condition in order to have no zig-zag walks for a closed path. 
Now we are ready to state and to prove the main result of this work.

\begin{thm}
	Let $\cP$ be a closed path. $I_{\cP}$ is prime if and only if $\cP$ contains no zig-zag walks.
\end{thm}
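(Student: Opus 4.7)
The plan is to assemble the main theorem from the machinery already built up in the paper, together with the necessary condition from \cite{Trento}. The two directions are essentially independent and follow by citing previous results.

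For the forward implication ($I_{\cP}$ prime $\Rightarrow$ $\cP$ contains no zig-zag walks), I would simply invoke the general result of \cite{Trento}, which states that for any polyomino, primality of the polyomino ideal forces the absence of zig-zag walks. Since a closed path is a (non-simple) polyomino by Proposition~\ref{P is a not closed path}, this result applies directly, so this direction requires no further argument.

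For the converse (no zig-zag walks $\Rightarrow$ $I_{\cP}$ prime), I would combine three ingredients already established in the excerpt. First, by Proposition~\ref{pathNoZig}, if a closed path $\cP$ contains no zig-zag walks, then $\cP$ must contain either an L-configuration or a ladder with at least three steps. Second, if $\cP$ has an L-configuration, then Corollary~\ref{P closed path L cof then prime} gives primality of $I_{\cP}$ (obtained from the toric identification $I_{\cP}=J_{\cP}$ in Theorem~\ref{Path with L conf is toric}, since $J_{\cP}$ is a toric ideal and hence prime). Third, if instead $\cP$ has a ladder of at least three steps, then Corollary~\ref{P closed path ladder then prime} yields primality of $I_{\cP}$ by the analogous toric description of Theorem~\ref{proof2}. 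The two sub-cases exhaust the possibilities produced by Proposition~\ref{pathNoZig}, so the converse follows.

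There is no real obstacle here: essentially all of the technical work has already been carried out in Sections~\ref{Section Closed path}, \ref{Section L-conf toric}, and \ref{Section toric ladder}. The only subtlety worth flagging is that the two toric rings $T_{\cP}$ defined in Sections~\ref{Section L-conf toric} and~\ref{Section toric ladder} are genuinely different, because the ``hole variable'' $w$ is attached to different vertex sets (the corners of the distinguished cell $A$ of the L-configuration in the first case, versus the vertex set $L_{\cB}$ of the ladder in the second). However, in each case primality is automatic once one knows $I_{\cP}=J_{\cP}$, since a toric ideal $J_{\cP}=\ker\phi$ with $\phi:S\to T_{\cP}$ and $T_{\cP}$ a subring of a polynomial ring (hence a domain) is always prime. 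So the proof reduces to a two-line argument: apply Proposition~\ref{pathNoZig}, then apply the appropriate corollary among Corollary~\ref{P closed path L cof then prime} and Corollary~\ref{P closed path ladder then prime}.
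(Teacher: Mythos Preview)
Your proposal is correct and matches the paper's own proof essentially line for line: the necessary direction is cited from \cite{Trento}, and the sufficient direction is obtained by combining Proposition~\ref{Ha zig zag2} with Corollary~\ref{P closed path L cof then prime} and Corollary~\ref{P closed path ladder then prime}. The additional remarks you make about the two different toric representations and why $J_{\cP}$ is automatically prime are accurate elaborations, but the paper simply cites the corollaries without spelling this out.
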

\begin{proof}
	The necessary condition is shown in \cite[Corollary 3.6]{Trento}. The sufficient one follows from the Proposition \ref{Ha zig zag2}, Corollary \ref{P closed path L cof then prime} and Corollary \ref{P closed path ladder then prime}.
\end{proof}

\section{Primality of other classes of polyominoes with paths}

Actually the arguments in the proofs of the results contained in this work can provide also the primality for a larger class of polyominoes. First of all, we introduce some useful definitions and notions.\\
We call a \emph{L-triomino} any polyomino consisting of three cells not aligned; for instance see Figure~\ref{FigTrimino} (A). Referring to the figure, we call \emph{hooking vertices} the vertices $a$ and $b$. Moreover we call \emph{hooking edges} with respect to $a$ (resp. $b$) the couple of edges of $A$ (resp. $B$) that intersect at $a$ (resp. at $b$).\\ 
\noindent A polyomino $\cC$ is called an \textit{open path} if it is a sequence of two or more cells $A_1,\dots,A_n$ such that:
	\begin{enumerate}
		\item $A_i\cap A_{i+1}$ is a common edge, for all $i=1,\dots,n-1$;
		\item $A_i\neq A_j$, for all $i\neq j$ and $i,j\in \{1,\dots,n\}$;
		\item If $n>2$, then $V(A_i)\cap V(A_j)=\emptyset$ for all $i\in\{1,\dots,n-2\}$ and for all $j\notin\{i,i+1,i+2\}$.
	\end{enumerate}
The edges of $A_1$ (resp. $A_n$), which do not belong to $E(A_2)$ (resp. $E(A_{n-1})$), are called \textit{free edges}.

\begin{figure}[h]
	\subfloat[][An L-triomino.]{\includegraphics[scale=0.9]{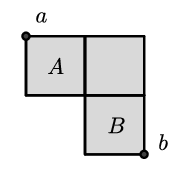}}\qquad \qquad\qquad
	\subfloat[][An open path.]{\includegraphics[scale=0.9]{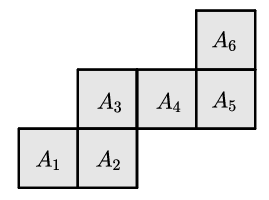}}  
	\caption{}
	\label{FigTrimino}
\end{figure}

\begin{defn} \rm
Let $\mathcal{S}$ be a simple polyomino, $\mathcal{C}:A_1,\ldots,A_n$ be an open path and $\mathcal{T}_1$ and $\mathcal{T}_2$ be two L-triominoes. Moreover we denote by $a_1$, $b_1$ the hooking vertices of $\mathcal{T}_{1}$ and by  $a_2$, $b_2$ the hooking vertices of $\mathcal{T}_{2}$. We denote by $\mathcal{P}(\mathcal{S},\mathcal{C})$ a polyomino satisfying the following conditions:
\begin{enumerate}
\item $\mathcal{P}(\mathcal{S},\mathcal{C})=\mathcal{S}\cup \mathcal{C}\cup \mathcal{T}_{1}\cup \mathcal{T}_{2}$.
\item $V(\mathcal{S})\cap V(\mathcal{C})=\emptyset$ and $V(\mathcal{T}_1)\cap V(\mathcal{T}_2)=\emptyset$.
\item $E(\mathcal{S})\cap E(\mathcal{T}_1)=\{V_1\}$ where $V_1$ is a hooking edge with respect to $a_1$  and $E(\mathcal{S})\cap E(\mathcal{T}_2)=\{V_2\}$ where $V_2$ is a hooking edge with respect to $a_2$.
\item $E(\mathcal{C})\cap E(\mathcal{T}_1)=\{W_1\}$ where $W_1\in E(A_1)$ and it is is a hooking edge with respect to $b_1$, and $E(\mathcal{C})\cap E(\mathcal{T}_2)=\{W_2\}$ where $W_2\in E(A_n)$ and it is a hooking edge with respect to $b_2$.
\item $|V(\cC)\cap V(\mathcal{T}_1)|=|V(\cS)\cap V(\mathcal{T}_1)|=|V(\cS)\cap V(\mathcal{T}_2)|=|V(\cC)\cap V(\mathcal{T}_2)|=2$.
\end{enumerate}
\begin{figure}[h]
\centering
\includegraphics[scale=1]{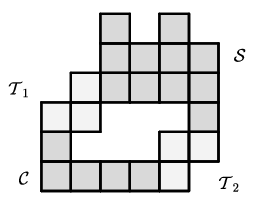}
\caption{An example of $\cP(\cS,\cC)$.}
\end{figure}
\end{defn}

\begin{rmk}\rm
    According to Proposition \ref{P is a not closed path}, it is easy to prove that a polyomino $\mathcal{P}(\mathcal{S},\mathcal{C})$, where $\mathcal{S}$ is a simple polyomino and $\mathcal{C}$ is an open path, is a non-simple polyomino and has only one hole. Moreover the polyomino consisting of all the cells of $\mathcal{P}(\mathcal{S},\mathcal{C})$, except two or more adjacent cells of $\cC$, is a simple polyomino.
    \end{rmk}

\begin{thm}
Let $\mathcal{P}=\mathcal{P}(\mathcal{S},\mathcal{C})$ be a polyomino with $\mathcal{S}$ a simple polyomino and $\mathcal{C}$ an open path. Suppose that $\mathcal{C}$ contains an L-configuration or a ladder with at least three steps. Then $I_{\cP}$ is a prime ideal.
\end{thm}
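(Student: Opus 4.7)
The plan is to adapt the toric representation technique from Sections~\ref{Section L-conf toric} and~\ref{Section toric ladder} to this more general setting. Exactly as before, I would introduce a surjective ring homomorphism $\phi: S \to T_\cP$ from $S = K[x_v : v \in V(\cP)]$ to a toric ring $T_\cP$, sending each vertex $v \in V_i \cap H_j$ to $v_i h_j w^{k}$, where $k = 1$ if $v$ belongs to the distinguished vertex set $\cF$ associated with the L-configuration or ladder in $\cC$ (namely $V(A)$ for the central cell of the L-configuration, or $L_\cB$ for the ladder), and $k = 0$ otherwise. Writing $J_\cP = \ker \phi$, the goal is to prove $I_\cP = J_\cP$; since $T_\cP$ is an integral domain, this yields primality at once.

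The inclusion $I_\cP \subseteq J_\cP$ follows by the same local case analysis as in Propositions~\ref{closed path L then I in J} and~\ref{I in J ladder conf}: for every inner interval $[p,q]$ of $\cP$, the incidence of its corners with $\cF$ forces the monomial cancellation under $\phi$ that makes the associated $2$-minor lie in $J_\cP$. This argument is purely local around $[p,q]$, so it is insensitive to what the rest of the polyomino looks like; inner intervals lying entirely within $\cS$, within a trimino, or meeting $\cF$ in the same ways that appear in the closed path case, are all controlled by the same monomial bookkeeping.

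For the reverse inclusion I would mirror Theorems~\ref{proof1} and~\ref{proof2}: first show that every degree-two binomial in $J_\cP$ arises from an inner interval of $\cP$ by a local monomial computation, and then show that no irredundant binomial $f \in J_\cP$ can have $\deg f \geq 3$. For the latter I would split into two cases. If $f$ uses no variables indexed by $\cF$, then $f$ belongs to the toric ideal of the polyomino $\cP'$ obtained from $\cP$ by removing the cells of $\cC$ that contain vertices of $\cF$; by the remark following the definition of $\cP(\cS,\cC)$, $\cP'$ is simple, so $J_{\cP'} = I_{\cP'} \subseteq I_\cP$ by \cite[Theorem~2.2]{Simple are prime}, forcing $f$ to be redundant in $J_\cP$. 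If instead $f$ contains some variable $x_{v_1}$ with $v_1 \in \cF$, I would run verbatim the analysis of cases (I)--(IV) from Theorem~\ref{proof1} (respectively (I)--(III) from Theorem~\ref{proof2}), producing auxiliary vertices $v_1', v_2, v_3, v_2', v_3'$ that identify an inner interval of $\cP$ with three corners in $V_f^\pm$, and then invoking Lemma~\ref{Lemma shikama1} to reduce $f$, contradicting irredundancy.

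The main obstacle will be verifying that this last case analysis carries over. The key point is that the L-configuration (or ladder) sits inside the open path $\cC$, and the cells immediately surrounding it are determined only by the local shape of $\cC$, not by $\cS$ or the triminoes $\cT_1, \cT_2$, which by conditions (2)--(5) of the definition of $\cP(\cS,\cC)$ are attached far enough away to behave, combinatorially, as in a closed path. Hence the auxiliary vertices $v_2, v_3, v_2', v_3'$ lie in the same positions they would in the closed path case, identify inner intervals of the same shape, and Lemma~\ref{Lemma shikama1} applies without modification. Once this is checked, $I_\cP = J_\cP$ follows, and primality is immediate.
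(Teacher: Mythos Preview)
Your proposal is correct and follows essentially the same approach as the paper: the paper's own proof simply states that one defines the toric ideal exactly as in Section~\ref{Section L-conf toric} (respectively Section~\ref{Section toric ladder}) and repeats the arguments of Proposition~\ref{closed path L then I in J} and Theorem~\ref{proof1} (respectively Proposition~\ref{I in J ladder conf} and Theorem~\ref{proof2}), since the local structure of $\cP$ around the L-configuration or ladder in $\cC$ is the same as in a closed path. Your write-up is in fact more detailed than the paper's, which leaves the verification implicit.
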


\begin{proof}
    If $\mathcal{C}$ contains an L-configuration then by defining the toric ideal as in Section~4 we obtain the claim following the same steps as in Proposition \ref{closed path L then I in J} and  Theorem \ref{proof1}, since the structure of $\cP$ allows it. If $\mathcal{C}$ contains a ladder with at least three steps the proof is similar, considering the toric ideal in Section~5, Proposition \ref{I in J ladder conf} and Theorem \ref{proof2}.
\end{proof}

\begin{rmk} \rm
Observe that if $\mathcal{C}$ contains an L-configuration or a ladder with at least three steps then $\mathcal{P}(\mathcal{S},\mathcal{C})$ has no zig-zag walks. The converse is not true (see Figure \ref*{ P(S,C) senza zig zag}), so it is an open question to ask what are the conditions allowing $\mathcal{P}(\mathcal{S},\mathcal{C})$ to have no zig-zag walks. In particular, we ask if the conjecture in \cite{Trento} is true also for polyominoes like $\mathcal{P}(\mathcal{S},\mathcal{C})$.
\begin{figure}[h!]
	\centering
\includegraphics[scale=1]{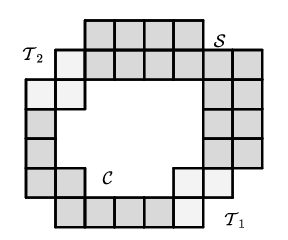}
	\caption{}
	\label{ P(S,C) senza zig zag}
\end{figure}
\end{rmk}

\noindent In \cite{Shikama} the author studied the polyomino ideal attached to a polyomino obtained by removing a convex polyomino from its ambient rectangle $\cR$. Our idea is to build a non-simple polyomino adding two open paths and a simple polyomino to a rectangle $\cR$. 
\begin{defn}\rm \label{def rectangle linked simple}
Let $\cR$ be a rectangle polyomino, associated to the interval $[(1,1),(m,n)]$, where $m\geq 4$ and $n\geq 2$.
Let $\cS$ be a simple polyomino, $\cP_1:C_1,\dots,C_t$ and $\cP_2:F_1,\dots,F_p$ be two open paths. A \textit{rectangle linked to a simple polyomino by two paths}, denoted by $\cP(\cR,\cP_1,S,\cP_2)$, is a polyomino satisfying the following conditions, after opportune reflections or rotations:
\begin{enumerate}
\item $\cP(\cR,\cP_1,S,\cP_2)=\cR\cup \cP_1\cup S\cup\cP_2$.
\item $V(\cS)\cap V(\cR)=\emptyset$ and $V(\cP_1)\cap V(\cP_2)=\emptyset$.	
\item The lower left corner of $C_1$ is $(1,n)$ and $V(\cP_1)\cap V(\cR)=\{(1,n),(2,n) \}$. 
\item $E(C_t)\cap E(\cS)=\{W\}$, where $W$ is a free edge of $C_t$, and $|V(\cP_1)\cap V(\cS)|=2$. 
\item $E(F_1)\cap E(\cS)=\{Z\}$, where $Z$ is a free edge of $F_1$, and $|V(\cP_2)\cap V(\cS)|=2$. 
	\item $E(F_p)\cap E(\cR)=\{V\}$, where $V$ is a free edge of $F_p$, and $|V(\cP_2)\cap V(\cR)|=2$.
	 
\end{enumerate}	
\end{defn}

\begin{rmk} \rm 
	On account of Proposition~\ref{P is a not closed path}, a rectangle linked to a simple polyomino by two paths is not a simple polyomino and it has a unique hole. Let us denote by $\cP_{\cR}$ the collection of cells of $\cR$, whose lower left corners are $(1,k)$ or $(2,k)$ for all $k=1,\dots,n-1$, and by $\cP_1^w$ the sequence of the first $w$ cells of $\cP_1$ for some $w\in\{1,\dots,t\}$. Then the polyomino consisting of all the cells of $\cP$ except the cells of $\cP_{\cR}\cup \cP_1^w$ is a simple polyomino.
\end{rmk}

\begin{defn}\rm
	Let $\cR$, $\cS$, $\cP_1$ and $\cP_2$ be as in the previous definition. A polyomino $\cP=\cP(\cR,\cP_1,S,\cP_2)$ is called an \textit{L-rectangle linked to a simple polyomino by two paths}, if 
	\begin{enumerate}
		\item  it satisfies all conditions in Definition \ref*{def rectangle linked simple};
		\item the lower left corner of $C_2$ is $(1,n+1)$;
		\item 
		let $V$ be the free	edge of $F_p$ such that $E(\cR)\cap E(\cP_2)=\{V\}$. Then $V\in \big\{ \{ (k,n),(k+1,n) \}:k=3\dots,m-1 \big \} \cup \big\{ \{ (m,l),(m,l+1) \}:l=1\dots,n-1  \big\} \cup \big\{ \{ (h,1),(h+1,1) \}:h=3\dots,m-1 \big \} $.
	\end{enumerate}
	Let $V_1$ and $V_2$ be the maximal vertical edge intervals of $\cP$, which contain respectively the vertices $(1,n)$ and $(2,n)$. Denote by $E_{V_1,V_2}$ the shortest maximal vertical edge interval between $V_1$ and $V_2$. Moreover, for all $k\in\{1,\dots,n\}$ let $H_k$ be the maximal horizontal edge interval containing $(1,k)$, $\cF_k$ the shortest one between $H_k$ and $H_{k+1}$ for each $k\in \{1,\dots,n-1\}$.  We call $\cP$ \textit{good} if the following cells belong to $\cP$:
	
	\begin{itemize}
	    \item  all cells having an edge in $E_{V_1,V_2}$ and lying between $V_1$ and $V_2$;
	    \item all cells having an edge in $\cF_k$ and lying between $H_k$ and $H_{k+1}$, for all $k\in \{1,\dots,n-1\}$. 
	\end{itemize}
\end{defn}
\begin{figure}[h]
	\centering
\includegraphics[scale=1]{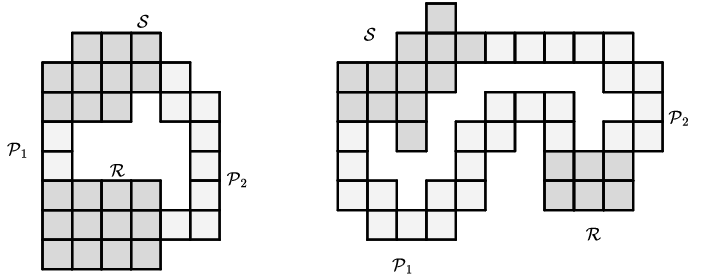}
	\caption{L-rectangles linked to a simple polyomino by two paths.}
	\label{L-rect}
\end{figure}
\noindent Referring to Figure \ref{Good rectangle example}, (A) and (B) are L-rectangles linked to a simple polyomino by two paths but not good, (C) is a good one, just as polyominoes in Figure \ref{L-rect} are.
\begin{figure}[h]
	\centering
\includegraphics[scale=1]{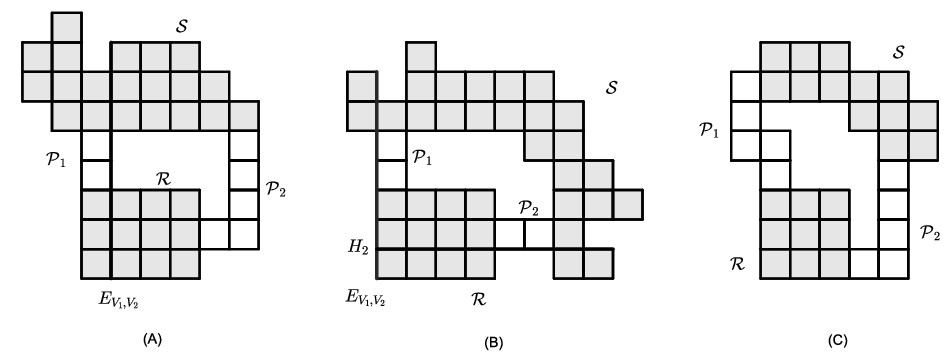}
	\caption{}
	\label{Good rectangle example}
\end{figure}
\begin{prop}
Let $\cP=\cP(\cR,\cP_1,S,\cP_2)$ be a good L-rectangle linked to a simple polyomino by two paths. Then $I_{\cP}$ is prime.
\end{prop}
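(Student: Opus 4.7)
The plan is to mimic the toric-representation strategy developed in Section \ref{Section L-conf toric} for closed paths with an L-configuration. Since $\cP$ is a good L-rectangle linked to a simple polyomino by two paths, it contains a canonical L-configuration formed by the three leftmost cells of the top row of $\cR$ (those with lower-left corners $(3,n-1)$, $(2,n-1)$, $(1,n-1)$, which exist because $m\geq 4$) together with $C_1$ and $C_2$, whose lower-left corners are $(1,n)$ and $(1,n+1)$. Denote by $A$ the corner cell of this L-configuration (the top-left cell of $\cR$), with diagonal corners $a$, $b$ and anti-diagonal corners $c$, $d$. Exactly as in Section \ref{Section L-conf toric}, I would define the toric ring $T_\cP=K[\alpha(v):v\in V(\cP)]$ by attaching the extra variable $w$ only to the vertices of $A$, and let $\phi\colon S\to T_\cP$ be the induced surjection with kernel $J_\cP$.

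The inclusion $I_\cP\subseteq J_\cP$ then follows by a direct verification on the generators, essentially identical to Proposition \ref{closed path L then I in J}: every inner interval $[p,q]$ of $\cP$ either avoids $A$, coincides with $A$, or meets $V(A)$ at exactly one corner whose companion on the $\phi$-image side also lies in $V(A)$. It remains to prove, in the spirit of Theorem \ref{Path with L conf is toric}, that every binomial of degree two in $J_\cP$ belongs to $I_\cP$ and that every irredundant binomial of $J_\cP$ has degree two.

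The degree-two step proceeds through the same case analysis on whether the corners of the associated interval belong to $V(A)$; when they do not, the argument reduces to working in the polyomino $\cP'$ obtained by removing from $\cP$ every cell sharing a vertex with $A$, and invoking \cite[Theorem 2.2]{Simple are prime}. The \emph{good} condition is precisely what legitimizes this reduction: the density hypotheses involving $E_{V_1,V_2}$ and the intervals $\cF_k$ guarantee that the L-shaped gap created by the removal opens the unique hole of $\cP$ to the exterior, so that $\cP'$ is connected and hole-free, hence simple.

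For the degree $\geq 3$ step, assume for contradiction that an irredundant binomial $f=f^+-f^-\in J_\cP$ exists. If no variable of $f$ corresponds to a vertex of $A$, then $f\in J_{\cP'}=I_{\cP'}\subset I_\cP$, contradicting irredundancy in $J_\cP$. Otherwise I would reproduce the four subcases at the end of the proof of Theorem \ref{Path with L conf is toric}: pick $v_1\in V_f^+\cap V(A)$, identify a companion $v_1'\in V_f^-\cap V(A)$, locate the auxiliary vertices $v_2,v_3\in V_f^+$ and $v_2',v_3'\in V_f^-$ sitting on the maximal edge intervals through $v_1'$ and $v_1$, and apply Lemma \ref{Lemma shikama1} to a suitable inner interval near $A$ to force $f$ to be redundant. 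The principal technical difficulty is ensuring, case by case, that the required inner intervals are genuinely present in the neighbourhood of $A$ and that the vertex-reduction manoeuvres stay inside the L-configuration region; this is exactly what the good condition secures, and is what would break the argument in the general, non-good, setting.
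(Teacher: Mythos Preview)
Your overall strategy---mimic Section \ref{Section L-conf toric}, show $I_\cP\subseteq J_\cP$, then prove the two redundancy claims---is the right one, but the specific toric map you propose does not work. The crucial difference from the closed-path setting is that $\cR$ is a \emph{full} rectangle, so inner intervals of $\cP$ can hit the four-vertex set $V(A)$ in an unbalanced way. Concretely, take the inner interval $[(1,1),(3,n)]$ of $\cR$ (legitimate since $m\geq4$): its diagonal corners $(1,1)$ and $(3,n)$ lie outside $V(A)$, while its anti-diagonal corner $(1,n)$ lies in $V(A)$ and $(3,1)$ does not. The corresponding inner $2$-minor therefore has $w$-degree $0$ on one monomial and $w$-degree $1$ on the other, so it is \emph{not} in the kernel of your $\phi$. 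Thus already the inclusion $I_\cP\subseteq J_\cP$ fails, and the whole argument collapses at Proposition \ref{closed path L then I in J}.

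The paper repairs this by enlarging the ``marked'' vertex set: instead of $V(A)$ it uses
\[
A_e=\{v\in V(\cR):v\leq e\},\qquad e=(2,n),
\]
i.e.\ the full two leftmost columns of vertices of $\cR$. For this set the $w$-count on any inner interval of $\cR$ is automatically balanced (membership in $A_e$ depends only on the first coordinate), and the L-shaped attachment of $\cP_1$ at $(1,n)$ together with condition (3) on the attachment of $\cP_2$ keeps the balance for inner intervals reaching into the paths. The \emph{good} hypothesis is then exactly what is needed to push the case analysis of Theorem \ref{Path with L conf is toric} through with this larger marked set and to guarantee that deleting the cells touching $A_e$ leaves a simple polyomino. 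So your outline is salvageable, but only after replacing $V(A)$ by $A_e$ and redoing the local case analysis near the whole left edge of $\cR$ rather than near a single cell.
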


\begin{proof}
	 We denote by $e$ the vertex $(2,n)$. We define the toric ideal $J_{\cP}$ as in Section \ref{Section L-conf toric}, where $V(A)$ is replaced by $A_{e}=\{v\in V(\cR):v\leq e\}$. By similar arguments as in Proposition \ref{closed path L then I in J} and Theorem \ref{Path with L conf is toric}, we have $I_{\cP}=J_{\cP}$, because of the good structure of $\cP$. 
\end{proof}

\begin{defn} \rm 
	A polyomino $\cP(\cR,\cP_1,S,\cP_2)$ is called a \textit{ladder-rectangle linked to a simple polyomino by two paths}, if 
		\begin{enumerate}
		\item  it satisfies all conditions in Definition \ref*{def rectangle linked simple};
		\item $\cP_1$ contains two maximal horizontal blocks $[C_1,C_s]$ and $[C_{s+1},C_q]$, where $2\leq s< s+1< q\leq t$, and the lower left corner of $C_{s+1}$ is the upper left one of $C_s$;
		\item the free edge $V$ of $F_p$ such that $E(\cR)\cap E(\cP_2)=\{V\}$ satisfies $V\in \big\{ \{ (k,n),(k+1,n) \}:k=3, \dots,m-1 \big \}$.
	\end{enumerate}
\end{defn}

\begin{figure}[h]
	\centering
\includegraphics[scale=1]{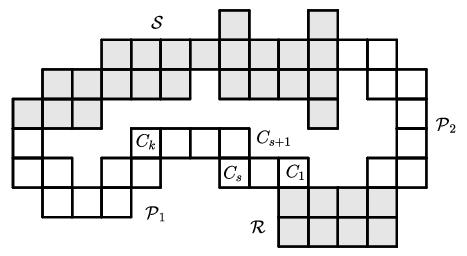}
	\caption{A ladder-rectangle linked to a simple polyomino by two paths.}
	\label{ladder rectangle}
\end{figure}

\begin{prop}
	Let $\cP=\cP(\cR,\cP_1,S,\cP_2)$ be a ladder-rectangle linked to a simple polyomino by two paths. Then $I_{\cP}$ is prime.
\end{prop}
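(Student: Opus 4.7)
The plan is to mimic the proof of Theorem \ref{proof2} almost verbatim, with the ladder $\cB=\{\cB_i\}_{i=1,\ldots,m}$ of Section \ref{Section toric ladder} replaced by the two-block configuration $\{[C_1,C_s],[C_{s+1},C_q]\}$ inside $\cP_1$, which by hypothesis forms a local ladder (the lower left corner of $C_{s+1}$ is the upper left corner of $C_s$). The whole geometric picture is tailored so that the hole of $\cP$ is adjacent to this step in exactly the same way the hole of a closed path is adjacent to its ladder, so the same toric model applies.

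Concretely, I would proceed as follows. First, after appropriate reflections or rotations placing $[C_1,C_s]$ horizontally and $[C_{s+1},C_q]$ as the block one row above (analogous to $\cB_{m-1}$ and $\cB_m$), define $L\subset V(\cP)$ to be the set of vertices $\{c_1,\ldots,c_s,\,d',\,a',\,b'\}$, where $c_i$ is the lower left corner of $C_i$ and $a',b',d'$ are respectively the diagonal and the remaining anti-diagonal corners of $C_{s+1}$, exactly mirroring $L_\cB=\{a_1,\ldots,a_n,d,a,b\}$. Then define the map $\alpha$ and the toric ideal $J_\cP$ as in Section \ref{Section toric ladder}, attaching the extra variable $w$ precisely to the vertices of $L$. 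The inclusion $I_\cP\subseteq J_\cP$ is verified case by case just as in Proposition \ref{I in J ladder conf}: for any inner 2-minor $f=x_px_q-x_rx_s$, either $[p,q]\cap L=\emptyset$, or $[p,q]=C_{s+1}$, or exactly one of $p,q$ lies in $L$ and then the structure of $\cP$ forces one of $r,s$ to lie in $L$ as well.

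The core of the proof is then identical to that of Theorem \ref{proof2}. Degree-two binomials in $J_\cP$ are shown to belong to $I_\cP$ by the same exhaustion of the three cases (both diagonal corners in $L$, exactly one in $L$, none in $L$); the third case invokes \cite[Theorem 2.2]{Simple are prime} applied to the simple subpolyomino obtained by removing the cells that have a vertex in $L$ (which is simple thanks to the remark preceding the statement). For an assumed irredundant $f\in J_\cP$ of degree $\geq 3$, the same dichotomy on the pair $v_1\in V^+_f\cap L$ and $v_1'\in V^-_f\cap L$ into the three subcases (same maximal vertical edge interval, same maximal horizontal edge interval, neither) produces in each instance an inner interval of $\cP$ to which Lemma \ref{Lemma shikama1} applies, forcing $f$ to be redundant. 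Finally, once no variables corresponding to vertices of $L$ (nor to the upper corners $b_2,\ldots,b_s$ of $[C_1,C_s]$) appear in $f$, the polynomial descends to the toric ideal of the simple polyomino $\cP'$ obtained by deleting the cells with a vertex in $L$, and \cite[Theorem 2.2]{Simple are prime} gives the required contradiction.

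The main obstacle I foresee is verifying that the local geometry around the step in $\cP_1$ really does match the one used in the closed-path proof. Specifically, in case (III) of Theorem \ref{proof2} one needs both that the inner interval $[v_1,v_2]$ exists in $\cP$ and that the companion inner interval with anti-diagonal corners $h,v_3$ and diagonal corner $v_1'$ exists: for a closed path this is automatic from the blocks $\cB_{m-1},\cB_m$ of the ladder, and in our setting it is guaranteed by the ladder structure of $[C_1,C_s]\cup[C_{s+1},C_q]$ together with condition (3) of the definition (which controls where $\cP_2$ reattaches to $\cR$, preventing spurious cells that would break the hole adjacency). The condition that $s\geq 2$ and $q\geq s+2$ ensures that the two blocks are genuinely non-trivial, so the argument of Proposition \ref{ladder then no zig zag} applies locally and all inner intervals invoked above indeed exist in $\cP$; once this is checked, the remainder of the argument is a direct transcription.
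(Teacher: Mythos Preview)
Your choice of the marked vertex set $L=\{c_1,\ldots,c_s,d',a',b'\}$ does not yield the inclusion $I_\cP\subseteq J_\cP$, so the whole argument breaks at the very first step. The difficulty is that $c_1$, the lower left corner of $C_1$, is the point $(1,n)$, which is also the upper left corner of the rectangle $\cR$. Hence many inner intervals of $\cR$ have $(1,n)$ as a corner while none of their other three corners lie in your $L$; for instance the single cell $[(1,n-1),(2,n)]$ of $\cR$ has corners $(1,n-1),(2,n-1),(2,n),(1,n)$, of which only $(1,n)=c_1$ belongs to $L$. The associated inner $2$-minor therefore has $w$-degree $1$ on one monomial and $0$ on the other, so it is not in $J_\cP$. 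No reflection or rotation removes the problem: one end of the block $[C_1,C_s]$ is attached to a genuine two-dimensional rectangle rather than to a one-cell-wide block, and this is exactly what distinguishes the present situation from the closed-path setting of Section~\ref{Section toric ladder}.

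The paper's proof handles this by enlarging the marked set to $L_e=\{v\in V(\cR):v\leq (2,n)\}\cup\{a_2,\ldots,a_s\}$, i.e.\ the two leftmost columns of vertices of $\cR$ together with the lower left corners of $C_2,\ldots,C_s$. In this model the role of the cell $A$ of Section~\ref{Section toric ladder} is played not by $C_{s+1}$ but by the rectangular region below $(2,n)$ inside $\cR$; the ladder being exploited is effectively $[C_{s+1},C_q]$, $[C_1,C_s]$, and the top row of $\cR$, with the $w$-weight attached at the step into $\cR$. With this $L_e$ every inner interval of $\cP$ meets $L_e$ in an even number of corners, so the analogue of Proposition~\ref{I in J ladder conf} goes through, and then the case analysis of Theorem~\ref{proof2} can indeed be transcribed. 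Your outline from the second paragraph onward is essentially correct once the set $L$ is replaced by $L_e$; the obstacle you flagged about ``local geometry around the step'' is real, but it is resolved by moving the marked region into $\cR$ rather than by the mechanism you proposed.
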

\begin{proof}
 We denote by $e$ the vertex $(2,n)$ and by $a_i$ the lower left corner of the cell $C_i$ of $\cP_1$, for all $i\in \{1,\dots,s\}$. We define the toric ideal $J_{\cP}$ as in Section \ref*{Section toric ladder}, where $L_{e}=\{v\in V(\cR):v\leq e\}\cup \{a_2,\dots,a_s\}$. By similar arguments as in Proposition \ref{I in J ladder conf} and Theorem \ref{proof2}
  we deduce that $I_{\cP}=J_{\cP}$.
\end{proof}
\begin{rmk}\rm
	We observe that for the class of polyominoes $\cP(\cR,\cP_1,S,\cP_2)$ the following:
	\begin{enumerate}
		\item $\cP(\cR,\cP_1,S,\cP_2)$ is a good L-rectangle linked to a simple polyomino by two paths,
		\item $\cP(\cR,\cP_1,S,\cP_2)$ is a ladder-rectangle linked to a simple polyomino by two paths,
	\end{enumerate}
are sufficient conditions in order that it does not contain zig-zag walks. Necessary conditions to have no zig-zag walks and a positive answer to the conjecture in \cite{Trento} for polyominoes like $\cP(\cR,\cP_1,S,\cP_2)$ are open questions.
\end{rmk}

\subsection*{Acknowledgements} The authors wish to thank warmly Rosanna Utano, Ph.D. advisor of them, for introducing to this subject related to polyominoes and for her helpful suggestions and comments in writing of this work.

\end{document}